\theoremstyle{plain}
\newtheorem{thm}{Theorem}
\newtheorem{lem}[thm]{Lemma}
\newtheorem{prop}[thm]{Proposition}
\newtheorem{cor}[thm]{Corollary}
\theoremstyle{definition}
\newtheorem{dfn}[thm]{Definition}
\newtheorem{ex}[thm]{Example}
\theoremstyle{remark}
\newtheorem{rmk}[thm]{Remark}
\newcommand{\cF}{\mathcal{F}}
\newcommand{\cN}{\mathcal{N}}
\newcommand{\cO}{\mathcal{O}}
\newcommand{\cV}{\mathcal{V}}
\newcommand{\veps}{\varepsilon}
\DeclareMathOperator{\uhp}{\mathcal{H}}
\DeclareMathOperator{\Tr}{Tr}
\DeclareMathOperator{\Hom}{Hom}
\DeclareMathOperator{\GL}{GL}
\DeclareMathOperator{\SL}{SL}
\DeclareMathOperator{\PSL}{PSL}
\DeclareMathOperator{\Sym}{Sym}
\DeclareMathOperator{\Res}{Res}
\DeclareMathOperator{\Ind}{Ind}
\DeclareMathOperator{\diag}{diag}
\newcommand*{\df}{\mathrel{\vcenter{\baselineskip0.5ex \lineskiplimit0pt
                     \hbox{\scriptsize.}\hbox{\scriptsize.}}} =}
\providecommand{\twomat}[4]{\left(\begin{matrix}#1&#2\\#3&#4\end{matrix}\right)}
\providecommand{\stwomat}[4]{\left(\begin{smallmatrix}#1&#2\\#3&#4\end{smallmatrix}\right)}
\providecommand{\twovec}[2]{\left(\begin{matrix}#1\\#2\end{matrix}\right)}
\newcommand{\CC}{\mathbf{C}}
\newcommand{\ZZ}{\mathbf{Z}}
\newcommand{\PP}{\mathbf{P}}
\newcommand{\RR}{\mathbf{R}}
\DeclareMathOperator{\GM}{GM}
\DeclareMathOperator{\tpexp}{\otimes_e}
\begin{document}

\title[Constructions of vvmfs]{Constructions of vector-valued modular forms of rank four and level one}
\author{Cameron Franc}
\email{franc@math.usask.ca}
\author{Geoff Mason}
\email{gem@ucsc.edu}
\thanks{The first author was partially supported by NSERC grant RGPIN-2017-06156. The second author was supported by the Simons Foundation $\# 427007$.}

\begin{abstract}
This paper studies modular forms of rank four and level one. There are two possiblities for the isomorphism type of the space of modular forms that can arise from an irreducible representation of the modular group of rank four, and we describe when each case occurs for general choices of exponents for the $T$-matrix. In the remaining sections we describe how to write down corresponding differential equations satisfied by minimal weight forms, and how to use these minimal weight forms to describe the entire graded module of holomorphic modular forms. Unfortunately the differential equations that arise can only be solved recursively in general. We conclude the paper by studying the cases of tensor products of two-dimensional representations, symmetric cubes of two-dimensional representations, and inductions of two-dimensional representations of the subgroup of the modular group of index two. In these cases the differential equations satisfied by minimal weight forms can be solved exactly.
\end{abstract}
\maketitle

\tableofcontents
\section{Introduction}
\label{s:intro}
In this paper we study methods for describing holomorphic modular forms that transform according to four dimensional complex representations of the group $\Gamma = \SL_2(\ZZ)$. In ranks two and three there is a relatively complete picture -- see for example \cite{FrancMason1}, \cite{FrancMason2} and \cite{FrancMason3}. These results have been used to prove instances of the unbounded denominator conjecture \cite{ASD}, and to classify VOAs according to the monodromy of the associated vector-valued modular form \cite{MasonNagatomoSakai}. Unfortunately the situation becomes more complicated for representations of rank four and higher, and one does not have as complete control as one would like.

There are two main results in this paper:
\begin{enumerate}
\item[(a)] in each of the two cases that can arise in rank four, we explain in Sections \ref{s:cyclic} and \ref{s:noncyclic} how to recursively compute a free basis of modular forms for any irreducible representation of $\Gamma$ of rank four;
\item[(b)] in Sections \ref{s:tensorproduct}, \ref{s:symmetriccube} and \ref{s:induction} we explain how to obtain exact formulas for bases of modular forms for irreducible representations of rank four that are obtained via linear algebraic constructions from representations of rank two.
\end{enumerate}
The solution to problem (a) amounts to determining the system of ordinary differential equations satisfied by the minimal weight form for a given representation, and then showing how the solution to this system of equations can be used to produce a free basis of modular forms.

In problem (b) a new issue arises: the space of holomorphic forms for a given representation $\rho$ corresponds to a canonical lattice $M(\rho,L)$ inside the space $M^\dagger(\rho)$ of weakly holomorphic modular forms for $\rho$. Here $L$ denotes an exponent matrix satisfying $\rho(T) = e^{2\pi i L}$ where $T = \stwomat 1101$, and $M(\rho,L)$ is the corresponding space of modular forms whose behaviour at the cusp is determined by $L$. See Section \ref{s:vvmfs} for a detailed discussion. We would like to be able to describe, concretely in terms of classical functions, a basis for $M(\rho,L)$. Instead, in each of the cases discussed in Sections \ref{s:tensorproduct}, \ref{s:symmetriccube} and \ref{s:induction}, we are able to produce an explicit basis for a larger space $M(\rho,L')$ of modular forms for some functorially induced choice of exponents $L'$. It is then possible to determine the subspace $M(\rho,L) \subseteq M(\rho,L')$ using linear algebra by examining $q$-expansions. This is similar to computing a space of modular forms of weight one by computing a larger space of modular forms of weight two, and then dividing the appropriate subspace by the square $\eta^2$ of the Dedekind eta function to recover the desired space of forms of weight one.

To provide a bit more detail, suppose for the sake of definiteness that $\rho = \Sym^3\alpha$ for some two-dimensional representation $\alpha$ of $\Gamma$, and let $L$ be a choice of exponents for $\alpha$ (not necessarily canonical). If $(f,g)^t$ is a minimal weight form for $\alpha$, then we describe in Section \ref{s:symmetriccube} the corresponding functorial choice of exponents $\Sym^3L$ such that $(f^3,f^2g,fg^2,g^3)^t$ is a minimal weight form in $M(\Sym^3\alpha,\Sym^3L)$. Combined with the solution to problem (a), this allows one to write exact formulas for bases of spaces of modular forms of the form $M(\Sym^3\alpha,\Sym^3L)$. As the exponents $L$ vary, the lattices $M(\Sym^3\alpha,\Sym^3L)$ are cofinal in the space $M^\dagger(\Sym^3\alpha)$ of all weakly holomorphic modular forms for $\Sym^3\alpha$. Thus, using our results, one can compute formulas for any weakly holomorphic modular form for an irreducible representation of rank four of the form $\Sym^3\alpha$. Section \ref{s:tensorproduct} discusses the case of tensor products, and Section \ref{s:induction} discusses the case of induction of representations of rank two of the subgroup $G \subseteq \Gamma$ of index two. Section \ref{s:induction} makes use of the results from \cite{CandeloriFranc2} on vector valued modular forms for $G$.

The moduli space of irreducible representations of $\Gamma$ of rank four is three-dimensional, while the families of such representations that arise by tensor product are two-dimensional, and the families that arise from symmetric cubes and induction are one-dimensional. Thus, our solution to problem (b) only covers a small portion of all moduli of representations of rank four. Nevertheless, for these families of representations one could prove new instances of the unbounded denominator conjecture as in \cite{FrancMason1}, \cite{FrancMason2} and \cite{FrancMason3}. The next step would be to classify the subset of representations that are of finite image, and then to classify which of those are congruence.

At this point we should mention the recent and interesting paper \cite{WesterholtRaum} of Westerholt-Raum, which studies the hyperalgebra structures that arise when considering tensor products of vector-valued modular forms. One of the aims of \cite{WesterholtRaum} is to establish results concerning generation of spaces of modular forms by products of Eisenstein series, and for this reason the paper \cite{WesterholtRaum} focuses on the case of congruence representations of $\Gamma$. In this paper we do not assume that the image of $\rho$ is finite.

\section{Vector-valued modular forms}
\label{s:vvmfs}
Let $\rho \colon \Gamma \to \GL_d(\CC)$ denote a representation of $\Gamma = \SL_2(\ZZ)$, and define as usual
\begin{align*}
  T &= \twomat 1101, & S&= \twomat 0{-1}10, & R&= ST=\twomat 0 {-1} 11.
\end{align*}
\begin{dfn}
  A \emph{choice of exponents} for $\rho$ is a matrix $L$ such that $\rho(T) = e^{2\pi i L}$.
\end{dfn}
Since the matrix exponential is surjective, there always exist choices of exponents for any representation.
\begin{dfn}
\label{d:wvvmfs}
  A function $F \colon \uhp \to \CC^d$ is said to be a \emph{weakly holomorphic modular form} for $\rho$ of weight $k \in \ZZ$ provided that the following conditions are satisfied:
  \begin{enumerate}
  \item $F$ is holomorphic;
  \item for all $\gamma = \stwomat abcd \in \Gamma$,
    \[
  F(\gamma \tau) = (c\tau+d)^k\rho(\gamma)F(\tau);
\]
\item for one choice of exponents $L$, the function $\tilde F(\tau) = e^{-2\pi i L\tau}F(\tau)$ has a meromorphic $q$-expansion, where $q = e^{2\pi i \tau}$.
\end{enumerate}
\end{dfn}

If $F$ is a weakly holomorphic modular form for $\rho$, then by condition (2) of Definition \ref{d:wvvmfs}, $\tilde F$ satisfies $\tilde F(\tau+1) = \tilde F(\tau)$. Since $F$ is holomorphic on $\uhp$, a standard argument then shows that $\tilde F$ has a convergent Laurent expansion in $q$. The meromorphy hypothesis of (3) in Defintion \ref{d:wvvmfs} is independent of the choice of exponents $L$. Let $M^\dagger_k(\rho)$ denote the set of all weakly holomorphic modular forms of weight $k$ for $\rho$, and set $M^\dagger(\rho) = \bigoplus_{k \in \ZZ} M_k^\dagger(\rho)$.

\begin{ex}
If $\rho$ is the trivial representation, then $M_0^\dagger(\rho) = \CC[j]$ where $j$ denotes the classical $j$-function, and $M^\dagger(\rho) = M(1)[1/\Delta]$.
\end{ex}

\begin{dfn}
\label{d:wvvmfs}
  Fix a choice of exponents $L$ for $\rho$. A function $F \colon \uhp \to \CC^d$ is said to be a \emph{holomorphic modular form} for $(\rho,L)$ of weight $k \in \ZZ$ provided that the following conditions are satisfied:
  \begin{enumerate}
  \item $F$ is holomorphic;
  \item for all $\gamma = \stwomat abcd \in \Gamma$,
    \[
  F(\gamma \tau) = (c\tau+d)^k\rho(\gamma)F(\tau);
\]
\item the function $\tilde F(\tau) = e^{-2\pi i L\tau}F(\tau)$ has a holomorphic $q$-expansion, where $q = e^{2\pi i \tau}$.
\end{enumerate}
\end{dfn}
Let $M_{k}(\rho,L)$ denote the set of holomorphic modular forms for $(\rho,L)$ of weight $k$, and let $M(\rho,L) = \bigoplus_{k \in \ZZ}M_k(\rho,L)$. The possible choices of exponents $L$ are classified by the real parts of their eigenvalues. If $L$ and $L'$ are two different choices of exponents for a fixed $\rho$, then the real parts of their eigenvalues differ by integers. Thus for a fixed $\rho$, the set of exponent matrices for $\rho(T)$ forms a lattice of rank equal to the number of Jordan blocks in the Jordan decomposition of $\rho(T)$. One can define a lexicographic partial ordering on the set of exponents, and in terms of this lexicographic ordering, for all $L \leq L'$ there is an order reversing inclusion:
\[
  M(\rho,L') \subseteq M(\rho,L) \subseteq M^\dagger(\rho).
\]
Each space $M(\rho,L)$ has the structure of a graded module over the ring $M(1) = \CC[E_4,E_6]$ of modular forms of level one, and $M^\dagger(\rho)$ has the structure of a graded module over $M(1)[1/\Delta]$. These structures are compatible with the inclusions $M(\rho,L) \subseteq M^\dagger(\rho)$.
\begin{ex}
  If the real parts of the eigenvalues of $L$ are chosen to lie in $[0,1)$, while $L'$ has all real parts chosen to lie in $(0,1]$, then $M(\rho,L)$ is the space of holomorphic modular forms with the standard definition of holomorphy at the cusp, and $M(\rho,L')$ is the subspace of cusp forms. In \cite{CandeloriFranc1}, $L$ was refered to as the canonical choice of exponents, following \cite{Deligne}.
\end{ex}

In \cite{CandeloriFranc1} it was explained how the spaces $M_k(\rho,L)$ can be interpreted as spaces of global sections of certain vector bundles on the  moduli space of elliptic curve. The interpretation is by now a quite classical part of the Riemann-Hilbert correspondence -- see \cite{Simpson} and \cite{Deligne} for discussions of this correspondence in a language that is quite close to the language of modular forms introduced above. Let us recall how this correspondence works.

First, the Riemann-Hilbert correspondence most naturally associates to $\rho$ a local system, equivalently, a flat holomorphic connection, on the open modular curve $Y(1) = \Gamma \backslash \uhp$ of level $1$. This bundle can be described concretely by pulling back along the uniformizing map $\uhp \to Y(1)$ as a trivial bundle $\uhp \times \CC^d$ endowed with an action of $\Gamma$:
\[
  \gamma (\tau,v) = (\gamma \tau, \rho(\gamma) v).
\]
Write $\cV(\rho)$ for this bundle. Its meromorphic extension to the cusp (see Section 0.8 of \cite{Sabbah} for a nice discussion of meromorphic bundles) will be denoted $\cV^\dagger(\rho)$, and $M^\dagger_0(\rho)$ is the space of global sections of $\cV^\dagger(\rho)$. More generally, the spaces $M^\dagger_k(\rho)$ are global sections of twists $\cV^\dagger_k(\rho) \df \cV^\dagger(\rho)\otimes \cO(k)$.

If $L$ is a choice of exponents for $\rho(T)$, then \cite{CandeloriFranc1} (see also page 738 of \cite{Simpson}) explains how to interpret $M_k(\rho,L)$ as the space of global sections of a vector bundle $\cV_k(\rho,L) \subseteq \cV^\dagger_k(\rho)$. In the terminology of \cite{Sabbah}, the various bundles $\cV_k(\rho,L)$ are examples of \emph{lattices} inside the meromorphic bundle $\cV^\dagger(\rho)$.
\begin{rmk}
  It is explained in great detail in \cite{Simpson} that the set of pairs $(\rho,L)$ is \emph{not} the right set of objects for considering the Riemann-Hilbert correspondence. Rather, one should consider filtered representations $(\rho,F)$ defined as follows: if $\rho$ is a representation with underlying vector space $V$, then $F$ is a decreasing filtration on $V$ indexed by real numbers, such that $F$ is left continuous, and for all $r \in \RR$ one has $\rho(T)F_rV \subseteq F_rV$. Filtered representations form a category, and the Riemann-Hilbert correspondence identifies this category with corresponding categories of filtered local systems, or equivalently, with the category of filtered holomorphic connections on $Y(1)$ with a regular singularity at the cusp. See \cite{Simpson} for details.

  The crux of the matter for us is that some lattices inside $\cV^\dagger_k(\rho)$ are not of the form $\cV_{k}(\rho,L)$ for a choice of exponents $L$ (see example \ref{ex:missinglattice} below). This is not an issue for the present paper, however, as all lattices in $\cV^\dagger_k(\rho)$ are contained in \emph{some} $\cV_k(\rho,L)$, and the most important spaces of modular forms (such as classical holomorphic forms and cusp forms) are spaces of global sections of lattices $\cV_k(\rho,L)$. Thus in this paper we will only work with spaces of modular forms associated to pairs $(\rho,L)$, rather than those coming from all filtered representations of $\Gamma$.
\end{rmk}

Using the geometric interpretation of the spaces $M_k(\rho,L)$ and the splitting principle for vector bundles on the compact modular curve $X(1)$, one obtains an easy proof of the following result, which generalizes a result of \cite{MarksMason}:
\begin{thm}
  \label{t:fmt}
  Let $(\rho,L)$ denote a representation $\rho$ of $\Gamma$, and a corresponding choice of exponents $L$. Then:
  \begin{enumerate}
  \item The module $M(\rho,L)$ of holomorphic modular forms for $(\rho,L)$ is free of rank $\dim \rho$ over the ring $M(1)$ of modular forms of level one;
  \item The module $M^\dagger(\rho)$ is a free $M(1)[1/\Delta]$-module of rank $\dim \rho$.
  \end{enumerate}
\end{thm}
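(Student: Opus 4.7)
The plan is to use the geometric interpretation developed in \cite{CandeloriFranc1}: the graded module $M(\rho,L) = \bigoplus_k M_k(\rho,L)$ is identified with $\bigoplus_k H^0(X(1),\cV_k(\rho,L))$, where $\cV_k(\rho,L) = \cV(\rho,L)\otimes \omega^{\otimes k}$ and $\omega$ is the Hodge line bundle on the compactified modular stack $X(1)$, whose sections of each tensor power give the spaces $M_k(1)$. The strategy is to apply a splitting principle to the vector bundle $\cV(\rho,L)$ on $X(1)$, reducing freeness of $M(\rho,L)$ over $M(1)$ to the rank-one case.

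First I would apply the splitting to obtain an isomorphism $\cV(\rho,L) \cong \bigoplus_{i=1}^d \omega^{\otimes -n_i}$ for certain integers $n_1,\dots,n_d$, using the fact that $\Pic(X(1))$ is generated by $\omega$ so every line bundle is a power of $\omega$. Twisting by $\omega^{\otimes k}$ and taking global sections gives
\[
M_k(\rho,L) \;=\; \bigoplus_{i=1}^d H^0\bigl(X(1),\omega^{\otimes(k-n_i)}\bigr) \;=\; \bigoplus_{i=1}^d M_{k-n_i}(1),
\]
and summing over $k \in \ZZ$ yields an isomorphism of graded $M(1)$-modules
\[
M(\rho,L) \;\cong\; \bigoplus_{i=1}^d M(1)(-n_i),
\]
which is free of rank $d = \dim\rho$, proving (1).

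For (2), I would observe that the meromorphic extension $\cV^\dagger(\rho)$ is obtained from any lattice $\cV(\rho,L)$ by localizing at the cusp. Since $\Delta$ is a section of $\omega^{\otimes 12}$ whose divisor is exactly the cusp with multiplicity one, inverting $\Delta$ on the level of graded rings corresponds to this localization on the geometric side. Therefore $M^\dagger(\rho) = M(\rho,L)\otimes_{M(1)} M(1)[1/\Delta]$, and freeness of rank $d$ over $M(1)[1/\Delta]$ is inherited from part (1).

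The main obstacle is to justify the splitting principle on the stack $X(1)$. On $\PP^1$ this is Grothendieck's theorem, proved by induction on rank using the vanishing of $\Ext^1$ between line bundles of suitable degrees. On $X(1)$, which is a weighted projective line with orbifold points of orders $2$ and $3$, one can either invoke the analogous splitting theorem of Geigle--Lenzing type for vector bundles on weighted projective lines, or argue directly: since $\Pic(X(1)) = \ZZ\omega$, Serre duality on $X(1)$ provides enough $\Ext^1$-vanishing to carry out Atiyah's inductive argument in the stacky setting. Either route requires a little care at the elliptic points, but the underlying mechanism is the same as Grothendieck's.
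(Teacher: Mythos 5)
Your proposal is correct and follows essentially the same route as the paper: part (1) is exactly the splitting-principle argument of \cite{CandeloriFranc1}, which the paper simply cites, and your part (2) is the geometric phrasing of the paper's localisation argument (injectivity of $M(\rho,L)\otimes_{M(1)}M(1)[1/\Delta]\to M^\dagger(\rho)$ from freeness, surjectivity because $\Delta^n F$ is holomorphic at the cusp for $n$ large). The only point to keep in mind is that the heavy lifting --- the Grothendieck-type splitting on the stacky curve $X(1)$ --- is precisely what \cite{CandeloriFranc1} supplies, so your sketch of that step is the part that would need to be made rigorous.
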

\begin{proof}
  A statement equivalent to the result above was first proved by Gannon in Theorem 3.3 of \cite{Gannon} under an admissibility hypothesis. See \cite{CandeloriFranc1} for a proof of (1) without any admissibility hypotheses. It remains to deduce (2) from (1) without the admissibility hypothesis of \cite{Gannon}. The point is that if $L$ denotes any choice of exponents, then the natural localisation map
\[
  \phi \colon M(\rho,L) \otimes_{M(1)} M(1)[1/\Delta] \ \to M^\dagger(\rho)
\]
is an isomorphism of graded modules (this justifies calling $M(\rho,L)$ a \emph{lattice} in $M^\dagger(\rho)$). The injectivity of $\phi$ follows from part (1). To see that the map is surjective, observe that if $F \in M^\dagger_k(\rho)$ then some multiple $\Delta^nF$ is contained in $M_{k+12n}(\rho,L)$, so that $F = \phi(\Delta^nF\otimes \Delta^{-n})$.
\end{proof}
\begin{rmk}
Part (1) of Theorem \ref{t:fmt} holds more generally for the space of modular forms associated to any filtered representation $(\rho,F)$ by the same argument as in \cite{CandeloriFranc1}.
\end{rmk}
\begin{rmk}
The weights of a basis of modular forms in $M(\rho,L)$ is a unique invariant of $(\rho,L)$, corresponding to the fact that vector bundles on the moduli space of elliptic curves decompose uniquely into line bundles. The weights of a free basis for $M^\dagger(\rho)$ are \emph{not} unique in general.
\end{rmk}

The bundles $\cV(\rho,L) = \cV_0(\rho,L)$ are endowed with natural regular connections, which are the restrictions of a natural regular connection on the meromorphic bundle $\cV^\dagger(\rho)$. In the natural description of $\cV(\rho,L)$ as a trivial bundle on $\uhp$, the connection is the usual holomorphic derivative. As a regular connection on the compact moduli space $X$ it is a map
\[
  \nabla \colon \cV(\rho,L) \to \cV(\rho,L) \otimes \Omega^1_X(\infty)
\]
where $\Omega^1_X(\infty)$ is the bundle of regular differentials with simple (equivalently, logarithmic) poles at the cusp $\infty$ of the compact modular curve $X$. Since $\Omega^1_X(\infty) \cong \cO(2)$, the connection defines a map $\cV_0(\rho,L) \to \cV_2(\rho,L)$. At the level of global sections, this is the usual map
\[
\frac{d}{d\tau}\colon M_0(\rho,L) \to M_2(\rho,L)
\]
taking a modular form of weight $0$ to a modular form of weight $2$.

The logarithmic connection $(\cV(\rho,L),\nabla)$ has an associated \emph{residue}, defined in terms of the action of $d/d\tau$ on a basis of local flat sections in an angular neighbourhood of the cusp. To be concrete, take for the angular neighbourhood the region
\[
U = \{x+iy \mid 0 < x < 1,~ y > 2\}
\]
in the upper half plane. If $v_1,\ldots, v_d$ is a basis for the vector space underlying $\rho$, then the functions $\tilde v_j \df e^{-2\pi i L\tau}v_j$ define a single-valued frame in the neighbourhood of the cusp determined by $U$. Observe that $\frac{d}{d\tau} \tilde v_j = -2\pi i L\tilde v_j$. The standard convention for residues is to divide by the period $2\pi i$ and define
\[
  \Res_\infty(\cV(\rho,L),\nabla) \df -L.
\]

Finally, below we will need to make use of the higher weight modular derivatives
\[
  D_k \colon M_k(\rho,L) \to M_{k+2}(\rho,L)
\]
defined by setting $D_k(F) = \frac{dF}{d\tau}-\frac{k}{12}E_2F$ where $E_2$ is the usual quasi-modular Eisenstein series of weight $2$, whose constant term is normalized to equal $1$. These operators arise from the natural connection on the $k$th symmetric power of the Gauss-Manin connection associated to the moduli space $X$. For details, see the appendix of \cite{Katz2} and tensor the construction described there with $(\cV(\rho,L),\nabla)$. If $D \colon M(\rho,L) \to M(\rho,L)$ denotes the corresponding graded operator that increase weights by $2$ ,then this makes $M(\rho,L)$ a graded module over the noncommutative ring $M(1)\langle D\rangle$ of modular linear differential operators.

\begin{ex}
  \label{ex:missinglattice}
  Let $\rho$ denote the inclusion representation, so that the exponents for $\rho(T)$ are of the form
  \[
  2\pi i L_n = \twomat {2\pi i n}10{2\pi i n},
\]
for $n \in \ZZ$. The canonical choice of exponents corresponds to the matrix $L_0$, and the corresponding connection $(\cV(\rho,L_0),\nabla)$ is isomorphic with the Gauss-Manin connection associated to the moduli space $X$. It is well-known (see e.g. the appendix of \cite{Katz2}) that there is an exact Hodge sequence
\[
  0 \to \cO(1) \to \cV(\rho,L_0) \to \cO(-1) \to 0.
\]
Theorem \ref{t:fmt} follows from the fact that this sequence splits $\cV(\rho,L_0) \cong \cO(1) \oplus \cO(-1)$. At the level of modular forms, a basis for $M(\rho,L_0)$ is described as follows: the natural homology bases of the elliptic curves $\CC/(\ZZ \oplus \ZZ\tau)$ define a modular form
\[
  F(\tau)= \twovec \tau 1 \in M_{-1}(\rho,L_0).
\]
This is the unique, up to rescaling, form of minimal weight in $M(\rho,L_0)$, and $F,DF$ defines a basis for $M(\rho,L_0)$ over $M(1)$. More generally,
\[
  M(\rho,L_n) = \Delta^nM(\rho,L_0) = M(1)\Delta^nF \oplus M(1) \Delta^n DF,
\]
where $\Delta$ is the usual Ramanujan $\Delta$-function satisfying $D_{12}\Delta = 0$.

Observe that $M(\rho,L_1) \subseteq M(\rho,L_0)$ is the subspace of cusp forms. Define $N \subseteq M(\rho,L_0)$ to be the subset of forms whose constant term is proportional to $(1,0)^T$. Since $M(\rho,L_1) \subseteq N \subseteq M(\rho,L_0)$, $N$ defines a lattice in $M^\dagger(\rho)$, in the sense that $N\otimes_{M(1)}M(1)[1/\Delta] \cong M^\dagger(\rho)$. Further, $N$ is a free $M(1)$-module with generators in weights $5$ and $7$. This module corresponds to a vector bundle $\cN$ of rank $2$ on $X$ isomorphic with $\cO(-5)\oplus \cO(-7)$. There are inclusions
\[
  \cV(\rho,L_1) \subsetneq \cN \subsetneq \cV(\rho,L_0)
\]
that induce isomorphisms away from the cusps, but these vector bundles are not isomorphic at the cusp. Thus $\cN$ gives an example of a bundle that is not of the form $\cV(\rho,L)$. Instead, it corresponds to a filtered representation in the sense of \cite{Simpson}.
\end{ex}

\begin{rmk}
Most of the discussion above applies more generally to any Fuchsian group\footnote{One exception is that in general the Free-Module Theorem \ref{t:fmt} does not hold -- see \cite{CandeloriFranc2} for a discussion of this point.}. Instead of choosing a single exponent matrix at $\infty$, one makes a choice of exponents for each cusp of the corresponding modular curve. We will require this more general setting when we discuss induction in Section \ref{s:induction} below.
\end{rmk}

\section{Representations and modular forms of rank two}
\label{s:rank2}

Below we will take tensor products and symmetric cubes of modular forms for two dimensional representations of $\Gamma$. Since one can give a complete description of all modular forms in rank two, this allows us to describe a finite number of one-parameter families of modular forms of rank four. In preparation for this, we begin by briefly recalling the description of representations and modules of modular forms of rank two, which is outlined in \cite{Mason2} and \cite{FrancMason1}. See also \cite{TubaWenzl}, which classifies irreducible representations of $\Gamma$ up to rank five. We too shall focus only on the irreducible representations, as our ultimate goal is the classification of certain irreducible representations of rank four. Note that $\Gamma$ has only a finite number of isomorphism classes of reducible representations of rank two, so that irreducibility is not a serious restriction in rank two.

Let $\rho \colon \Gamma \to \GL_2(\CC)$ be irreducible. From \cite{Mason2} or \cite{TubaWenzl}, by changing bases we may assume that
\begin{align*}
  \rho(T) &= \twomat xx0y, & \rho(S) &= \zeta^{2a}\twomat{0}{-x}{y}{0}, & \rho(R) &= \xi^{-a}\twomat{0}{-1}{1}{1},
\end{align*}
where $\xi = e^{2\pi i/6}$, $\zeta = \xi^2$, $xy = \xi^a$, and $x^2-xy+y^2 \neq 0$. Swapping the parameters $x$ and $y$ yields an isomorphic representation, but otherwise these representations are pairwise nonisomorphic. Notice that $\det \rho(T) = \chi(T)^{2a}$ where $\chi$ is the character of $\eta^2$, so that $\det \rho = \chi^{2a}$.

In order to describe the corresponding modular forms, there are two cases: the case where $x = y$, and the case where $x \neq y$. We handle the case $x = y$ first. Let $\alpha \colon \Gamma \to \GL_2(\CC)$ denote the inclusion representation. Then one verifies that in this case $\rho \cong \alpha \otimes \chi^a$ or $\rho \cong \alpha \otimes \chi^{a+6}$, and the corresponding modular forms are described as in Example \ref{ex:missinglattice} above, although one must rescale by $\eta^{2a}$ or $\eta^{2a+12}$.

It remains to treat the case when $x \neq y$. This was handled in \cite{FrancMason1}, although general exponents were not treated there. Thus, we will recall the results of \cite{FrancMason1} and explain how they generalize to arbitrary exponents.

Let $L$ denote an exponent matrix for $\rho(T)$. Then Example 1 of \cite{FrancMason4} explains that the minimal weight $k_1$ where $M_{k_1}(\rho,L)$ is nonzero equals $k_1 = 6\Tr(L)-1$. If $F \in M_{k_1}(\rho,L)$ is nonzero, then we claim that $F, DF$ must be a free basis for $M(\rho,L)$ over $M(1)$. If not, since there are no nonzero modular forms in $M(1) = \CC[E_4,E_6]$ of weight $2$, we deduce by Theorem \ref{t:fmt} that $DF = 0$. But this means that $F$ is of the form $\eta^{2k_1}v$ for some vector $v \in \CC^2$. The transformation laws for $F$ and $\eta$ imply that $v$ spans a subrepresentation of $\rho$ isomorphic to $\chi^{2k_1}$, contradicting the irreducibility of $\rho$. Note that this kind of argument holds quite generally and is a classical part of the theory of monodromy of differential equations --- see \cite{Mason1} for details.

Now, as in \cite{FrancMason1}, $F$, $DF$ and $D^2F$ are linearly dependent over $M(1)$, by Theorem \ref{t:fmt}. Therefore the argument from \cite{FrancMason1} for canonical exponents applies to arbitrary exponents. Recall that the idea is that we can write
\begin{equation}
\label{eq:mlderank2}
  D^2F + aE_4F = 0
\end{equation}
for some nonzero complex scalar $a$. Since $D\eta = 0$ (see e.g. the appendix to \cite{Katz2}), if we write $f = \eta^{-2k_1}F$, then $f$ is a possibly weakly holomorphic modular form of weight $0$ satisfying $D^2f + aE_4f = 0$. The form $f$ is a global section of the regular connection $\cV(\rho \otimes \chi^{-2k_1},L-\frac{k_1}{6})$, whose residue is $-L + \frac{k_1}{6}I_2$. If one writes $f$ as a multivalued function of $K = 1728/j$, where $j$ is the usual $j$ function, then \cite{FrancMason1} shows that $f(j)$ is the solution of an ordinary differential equation that is in fact hypergeometric. The indicial polynomial of this equation at $j=0$ is equal to the characteristic polynomial of $-L+\frac{k_1}{6}I_2$ (see Remark 3.12 of \cite{Katz}). One can use this to solve for the parameter $a$ in Equation \eqref{eq:mlderank2} in terms of the eigenvalues of $L$. By solving the hypergeometric differential equation satisfied by $f$, one can use the Free-Module Theorem \ref{t:fmt} to describe \emph{all} modular forms in $M(\rho,L)$. The precise result is the following:
\begin{thm}
\label{t:rank2}
Suppose that $\rho$ is an irreducible representation of $\Gamma$ of rank $2$, and let $L$ denote a choice of exponents for $\rho$, so that $\rho(T) = e^{2\pi i L}$. Then the following hold:
\begin{enumerate}
\item the minimal weight $k_1$ such that $M_{k_1}(\rho,L) \neq 0$ is $k_1 = 6\Tr(L)-1$;
\item if $F \in M_{k_1}(\rho,L)$ is nonzero, then every form in $M_k(\rho,L)$ can be described uniquely in the form $aF+bDF$ where $a \in M_{k-k_1}(1)$ and $b \in M_{k-k_1-2}(1)$;
\item if $\rho \cong \alpha \otimes \chi^a$ where $\alpha \colon \Gamma \to \GL_2(\CC)$ is the inclusion representation, then a minimal weight form for $\rho$ can be described as $PF$ for some $P \in \GL_2(\CC)$, where
  \[
  F(\tau) = \eta(\tau)^{2k_1+2}\twovec {\tau}{1};
\]
\item if $\rho(T)$ has distinct eigenvalues, so that $L$ also has distinct eigenvalues $r_1$ and $r_2$, then a minimal weight form for $\rho$ can be described as $PF$ for some $P \in \GL_2(\CC)$, where
  \[
  F = \eta^{2k_1}\twovec{K^{\frac{6(r_1-r_2)+1}{12}} {}_2F_1\left(\frac{6(r_1-r_2)+1}{12},\frac{6(r_1-r_2)+5}{12};r_1-r_2+1;K\right)}{K^{\frac{6(r_2-r_1)+1}{12}} {}_2F_1\left(\frac{6(r_2-r_1)+1}{12},\frac{6(r_2-r_1)+5}{12};r_2-r_1+1;K\right)}
\]
and $K = 1728/j$.
\end{enumerate}
\end{thm}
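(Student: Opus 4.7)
The proof breaks into three interconnected parts, each building on the arguments assembled in the paragraphs preceding the theorem.

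\emph{Parts (1) and (2).} I would first prove (2). By Theorem~\ref{t:fmt}, $M(\rho,L)$ is free of rank two over $M(1)$, so fix any nonzero $F \in M_{k_1}(\rho,L)$. Since $\dim M_2(1) = 0$, either $DF = 0$ or $\{F, DF\}$ completes to a basis of $M(\rho,L)$. In the former case, the product rule for the modular derivative together with $D\eta = 0$ forces $F = \eta^{2k_1}v$ for some constant $v \in \CC^2$, and comparing transformation laws exhibits $\CC v$ as a one-dimensional subrepresentation of $\rho$ on which $\Gamma$ acts by a power of $\chi$, contradicting irreducibility. Hence $\{F, DF\}$ is a basis. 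For (1), I would use the Wronskian $W := \det(F \mid DF) \in M_{2k_1+2}(\det\rho)$. The same dimensional reasoning gives $M_{k_1+4}(\rho,L) = \CC\cdot E_4 F$, so $D^2F = -aE_4F$ for some $a \in \CC$, whence $DW = \det(F\mid D^2F) = 0$. The only level-one modular forms killed by the weight-$w$ modular derivative are scalar multiples of $\eta^{2w}$, so $W = c\,\eta^{4k_1+4}$ for some $c \neq 0$. Matching the leading $q$-exponent $(k_1+1)/6$ of $\eta^{4k_1+4}$ against the leading exponent $\Tr L$ of $\det(F\mid DF)$ yields $k_1 = 6\Tr L - 1$.

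\emph{Part (3).} When $x = y$ in the Tuba--Wenzl normalization recalled from the excerpt, $\rho(T)$ is a single Jordan block. Direct matrix comparison then shows that $\rho \cong \alpha \otimes \chi^a$ or $\rho \cong \alpha \otimes \chi^{a+6}$, where $\alpha$ is the inclusion representation. Example~\ref{ex:missinglattice} gives the minimal weight form $\twovec\tau 1 \in M_{-1}(\alpha, L_0)$, and multiplying by $\eta^{2k_1+2}$ (which contributes weight $k_1+1$ and character $\chi^{k_1+1}$) yields a minimal weight form for $\rho$; an intertwiner $P \in \GL_2(\CC)$ absorbs the change of basis between the standard model and the given one.

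\emph{Part (4).} Assume $\rho(T)$ has distinct eigenvalues. Set $f := \eta^{-2k_1}F$, a weight-zero weakly holomorphic form satisfying the MLDE $D^2f + aE_4 f = 0$ established in the proof of (1), and a flat section of the regular connection $\cV(\rho\otimes\chi^{-2k_1}, L-\tfrac{k_1}{6}I_2)$ as recalled before the theorem. Expressing $f$ as a multivalued function of the Hauptmodul $K = 1728/j$ converts the MLDE into a Fuchsian ODE in $K$ with regular singularities at $K = 0, 1, \infty$; a direct computation, performed for canonical exponents in \cite{FrancMason1}, identifies this equation as hypergeometric. By Remark~3.12 of \cite{Katz}, the indicial polynomial at $K = 0$ is the characteristic polynomial of $-L + \tfrac{k_1}{6}I_2$, and after substituting $k_1 = 6\Tr L - 1$ its roots become $\tfrac{6(r_i - r_{3-i})+1}{12}$. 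Matching these against the Riemann scheme of ${}_2F_1(\alpha,\beta;\gamma;K)$ pins down the parameters uniquely, and solving gives the explicit formula in (4).

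\emph{Main obstacle.} The most involved step is the hypergeometric reduction in Part (4): carrying out the change of variables $\tau \mapsto K$, recognizing the resulting Fuchsian equation as hypergeometric, and identifying the parameters $(\alpha,\beta;\gamma)$ in terms of $r_1, r_2$. The computation for canonical exponents is already in \cite{FrancMason1}; the observation justifying the generalization is that the residue formula depends only on $L$ and not on the specific representative, so the argument extends essentially verbatim to arbitrary exponents.
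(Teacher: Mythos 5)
Your parts (2), (3) and (4) follow essentially the route the paper takes: the paper defers to \cite{Mason2}, \cite{FrancMason1} and the discussion in Section \ref{s:rank2}, and your sketches of the $DF\neq 0$ argument, the $\eta$-twist of Example \ref{ex:missinglattice}, and the hypergeometric reduction via the indicial polynomial at $K=0$ all reproduce that discussion accurately (your identification of the local exponents as $\tfrac{6(r_i-r_{3-i})+1}{12}$ checks out). The problem is part (1), where you replace the paper's citation of Example 1 of \cite{FrancMason4} with a self-contained Wronskian argument, and that argument only proves half of the claim.

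Concretely: from $W=\det(F\mid DF)=c\,\eta^{4k_1+4}$ with $c\neq 0$ you know $\ord_q W=\tfrac{k_1+1}{6}$ exactly. But on the other side, writing $\tilde F(\tau)=e^{-2\pi iL\tau}F(\tau)$, you only know $W=q^{\Tr(L)}h(q)$ with $h$ holomorphic at $q=0$; its leading coefficient is
\[
h(0)=\det\left(\tilde F(0)\ \middle|\ \left(L-\tfrac{k_1}{12}I\right)\tilde F(0)\right),
\]
since $\widetilde{DF}(0)=(L-\tfrac{k_1}{12}I)\tilde F(0)$. This determinant vanishes precisely when $\tilde F(0)$ is zero or an eigenvector of $L$, and nothing established so far rules that out for an arbitrary (non-canonical) choice of exponents $L$; minimality of the \emph{weight} of $F$ says nothing about nondegeneracy of its constant term relative to $L$. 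So the exponent comparison yields only $\tfrac{k_1+1}{6}\geq \Tr(L)$, i.e.\ $k_1\geq 6\Tr(L)-1$. The missing half of (1) is the \emph{existence} of a nonzero form in weight exactly $6\Tr(L)-1$, which is what the paper imports from \cite{FrancMason4}: there it follows from the identification $\dim M_k(\rho,L)=\chi(\cV_k(\rho,L))$ for $k$ at least the minimal weight, together with the Riemann--Roch computation of the Euler characteristic (the rank-two analogue of the argument in the proof of Theorem \ref{t:weights}). With that input added, your Wronskian computation becomes a pleasant consistency check and shows a posteriori that $h(0)\neq 0$; without it, the equality $k_1=6\Tr(L)-1$ is not proved. (Arguing instead that $h(0)\neq 0$ because the theorem forces it would be circular.)
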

\begin{proof}
Parts (1) and (2) where proved in \cite{Mason2} for canonical exponents, and (4) was proved in \cite{FrancMason1} for canonical exponents. We have explained above how the computations from \cite{Mason2} and \cite{FrancMason1} generalize to handle arbitrary exponents. Part (3) is well-known for canonical exponents, and it was discussed in Example \ref{ex:missinglattice} above in general for $\rho = \alpha$. The other cases can be deduced from this by tensoring with powers of $\chi$, multiplying by powers of $\eta$, and then shifting exponents appropriately.
\end{proof}

\begin{rmk}
In part (3) of Theorem \ref{t:rank2}, if $\rho = \alpha \otimes \chi^a$ then the matrix $P$ is the identity. In part (4) the matrix $P$ depends on the isomophism class of $\rho$ and on the particular basis of solutions to the relevant hypergeometric differential equation that we used to describe $F$. If one is happy to work with any representation in the isomorphism class of $\rho$, then it is harmless to assume that $P$ is the identity in part (4) as well.
\end{rmk}

\begin{rmk}
Instances of the formula in part (4) of Theorem \ref{t:rank2} have been observed sporadically many times in mathematics. Some of those occurences are in fact quite classical. For example, the paper \cite{FrancMason1} was inspired by \cite{KanekoZagier}.
\end{rmk}

\section{Representations and modular forms of rank four}
\label{s:rank4}

Let $\rho \colon \Gamma \to \GL_4(\CC)$ denote a representation. Proposition 2.6 of \cite{TubaWenzl} yields a basis of the underlying vector space such that in that basis,
\begin{align*}
\rho(T) &= \left(\begin{matrix}
x&(1+D^{-1}+D^{-2})y&(1+D^{-1}+D^{-2})z&w\\
0&y&(1+D^{-1})z&w\\
0&0&z&w\\
0&0&0&w
\end{matrix}
\right),\\
\rho(B) &= \left(\begin{matrix}
w&0&0&0\\
-z&z&0&0\\
Dy&-(D+1)y&y&0\\
-D^3x&(D^3+D^2+D)x&-(D^2+D+1)x&x
\end{matrix}
\right),
\end{align*}
for scalars $x, y, w, z$ and where $D = \sqrt{yz/xw}$. Note that $S = B^{-1}T^{-1}B^{-1}$, and observe that the eigenvalues of $\rho(T)$ determine $\rho$ up to the choice of sign for $D$.

The Corollary above 2.10 of \cite{TubaWenzl} shows that $\det\rho(T)^3 = 1$. Let $\zeta = e^{2\pi i/3}$ and write $\det \rho(T) = \zeta^a$. Then $yz = \zeta^ax^{-1}w^{-1}$ and we have $D = \xi^{a} (xw)^{-1}$ or $D = \xi^{a+3}(xw)^{-1}$ where $\xi = e^{2\pi i/6}$. Let us thus write $D = \xi^d(xw)^{-1}$ for some $d \in \{0,\ldots, 5\}$, so that $xyzw = \zeta^d$. Note that if $L$ is a choice of exponents for $\rho$, then $3\Tr(L)$ is an integer satisfying $3\Tr(L) \equiv d \pmod{3}$. We have the identities
\begin{align*}
\Tr(\rho(S)) &= 0,& \Tr(\rho(R)) &= -\xi^{-d}, & \Tr(\rho(R^2)) &= \zeta^{-d},
\end{align*}
and $\rho(S^2) = -(-1)^{d}$. The odd representations correspond to even $d$. It will thus be convenient to write $\rho(-I) = (-1)^e$, so that $e \not \equiv d \pmod{2}$.
\begin{rmk}
In \cite{TubaWenzl} it is shown that certain choices of eigenvalues $x$, $y$, $z$ and $w$ \emph{do not} lead to irreducible representations. Moreover, permutations of the eigenvalues yield isomorphic representations. See Section 2.10 of \cite{TubaWenzl} for a precise description of the moduli space of irreducible representations of $\Gamma$ of rank $4$. We do not require this precise description of the moduli space.
\end{rmk}

In the applications below we will want to allow arbitrary choices of exponents.\ For example, if $\rho$ is the symmetric cube of a $2$-dimensional representation, then it is most natural to use the corresponding symmetric cube lift of the $2$-dimensional choice of exponents.\ These exponents need not agree with the canonical choice of exponents for $\rho$.\ The paper \cite{FrancMason4} explains how to compute the weights of a basis of modular forms in $M(\rho,L)$ for irreducible $\rho$ satisfying $\dim \rho \leq 5$ and arbitrary choices of exponents. In particular, when $\dim \rho = 4$ as in this paper, the argument used in the proof of Proposition 1 in \cite{FrancMason4} shows that for arbitrary exponents $L$ for $\rho$,
\[
  \dim_{\CC} M_{k}(\rho,L) = \begin{cases}
    0 & k < 3\Tr(L)-3,\\
    \chi(\cV_{k,L}(\rho)), & k \geq 3\Tr(L)-3,
  \end{cases}
\]
where $\chi(\cV_{k}(\rho,L))$ denotes the Euler characteristic
\[
\chi(\cV_{k}(\rho,L)) = \dim_{\CC} H^0(X,\cV_{k}(\rho,L)) - \dim_{\CC} H^1(X,\cV_{k}(\rho,L))
\]
of the bundle $\cV_{k}(\rho,L)$. Happily, the Euler characteristic is easy to compute. Corollary 6.2 of \cite{CandeloriFranc1} uses Riemann-Roch to obtain the following formula:
\begin{equation}
  \label{eq:eulerchar}
  \chi(\cV_{k}(\rho,L)) = \begin{cases}
   \frac{5+k-3\Tr(L)}{3}-\frac{\xi^{k-d}}{3(1-\zeta)}+\frac{\zeta^{k-d}}{3(1-\zeta^{-1})} & d\not \equiv k \pmod{2},\\
    0& d\equiv k \pmod{2}.
  \end{cases}
\end{equation}

\begin{thm}
  \label{t:weights}
  Let $\rho$ denote an irreducible representation of $\Gamma$ of rank $4$, and write $\rho(-I) = (-1)^e$. Let $L$ denote a choice of exponents for $\rho$, and let $k_1 \leq k_2 \leq k_3 \leq k_4$ denote the weights of a free basis for $M(\rho,L)$ over the ring $M(1)$ of modular forms of level one. Then one of the following is true:
\begin{enumerate}
\item \emph{Cyclic case}: if $3\Tr(L)\not\equiv e \pmod{2}$, then $k_1 {=} 3\Tr(L) {-} 3$ and
  \[
  (k_1,k_2,k_3,k_4) = (k_1,k_1+2,k_1+4,k_1+6);
\]
\item \emph{Noncyclic case}: if $3\Tr(L)\equiv e \pmod{2}$, then $k_1 {=} 3\Tr(L){-}2$ and
  \[
  (k_1,k_2,k_3,k_4) = (k_1,k_1{+}2,k_1{+}2,k_1{+}4).
  \]
\end{enumerate}
\end{thm}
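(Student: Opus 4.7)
The plan is to use the Free-Module Theorem \ref{t:fmt} together with the explicit Euler characteristic formula \eqref{eq:eulerchar} to compute the Hilbert series of $M(\rho,L)$ in closed form, and then to factor out $(1-t^4)(1-t^6)$ to read off the weights.

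First I would set $m = 3\Tr(L)$ and observe that Theorem \ref{t:fmt} implies
\[
  \sum_{k} \dim_{\CC} M_k(\rho,L)\, t^k \;=\; \frac{t^{k_1}+t^{k_2}+t^{k_3}+t^{k_4}}{(1-t^4)(1-t^6)}.
\]
So it suffices to compute the left-hand side from the dimension formula. Since $\rho(-I) = (-1)^e$ forces $\dim M_k(\rho,L) = 0$ unless $k \equiv e \pmod 2$, and the formula from \cite{FrancMason4} gives $\dim M_k(\rho,L) = 0$ for $k < m-3$, the non-trivial terms occur only for $k \geq m-3$ with $k\equiv e \pmod 2$. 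Writing $k=k_{\min}+2j$ with $j \geq 0$ (where $k_{\min}=m-3$ in the cyclic case and $k_{\min}=m-2$ in the noncyclic case) and substituting $q=t^2$, the Euler characteristic \eqref{eq:eulerchar} becomes an affine function of $j$ plus two exponentials in $\zeta^{\pm j}$, because the congruence $m \equiv d \pmod 3$ together with the parity condition fixes the residue of $k_{\min}-d$ modulo $6$. Summing the resulting geometric-type series gives a closed form
\[
  \sum_{j\geq 0}\chi(\cV_{k_{\min}+2j}(\rho,L))\, q^j \;=\; \frac{A}{(1-q)^2} + \frac{B}{1-\zeta q} + \frac{C}{1-\zeta^{-1}q}
\]
for explicit constants $A,B,C$.

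Next I would multiply through by $(1-t^4)(1-t^6)$. The key identities are
\[
  (1-t^4)(1-t^6) = (1-q)(1+q)(1-q^3) = (1-q)^2(1+q)(1-\zeta q)(1-\zeta^{-1} q),
\]
so the three partial-fraction denominators are fully cancelled. Using $\zeta+\zeta^{-1}=-1$ and $(1-\zeta)(1-\zeta^{-1})=3$, the cross terms combine. In the cyclic case the computation collapses to $t^{m-3}(1+q+q^2+q^3) = t^{m-3}+t^{m-1}+t^{m+1}+t^{m+3}$, giving the claimed weights. In the noncyclic case the parity of $s = (m-d)/3$ flips, which reverses the signs of $\xi^{3s}$ in the Euler characteristic, and the analogous computation yields $t^{m-2}(1+2q+q^2) = t^{m-2} + 2t^{m} + t^{m+2}$.

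The main obstacle, though not a deep one, is the bookkeeping with $6$-th roots of unity and the parities of $m-d$ modulo $3$ and modulo $2$. Specifically, the constraints $3\Tr(L)\equiv d\pmod 3$ and $e\not\equiv d\pmod 2$ must be carefully combined with the case split ($m\not\equiv e$ vs.\ $m\equiv e$) to correctly identify $\xi^{k_{\min}-d}$ and $\zeta^{k_{\min}-d}$. Once this is done the remaining algebra is a short exercise in partial fractions. A sanity check at the minimal weight: in the cyclic case one finds $\dim M_{m-3}(\rho,L) = \frac{2}{3} + \frac{1}{1-\zeta}\cdot\frac{1}{3} + \frac{1}{1-\zeta^{-1}}\cdot\frac{1}{3} = 1$, confirming that $k_1=m-3$, and an analogous evaluation gives $\dim M_{m-2}(\rho,L)=1$ in the noncyclic case.
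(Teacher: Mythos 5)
Your proposal is correct and follows essentially the same route as the paper's proof: both compute the Hilbert series of $M(\rho,L)$ from the Euler characteristic formula \eqref{eq:eulerchar}, sum the resulting geometric series in $T^2$, and compare the closed form against the numerator $\sum_j T^{k_j}$ supplied by the Free-Module Theorem. Your bookkeeping (the congruences $3\Tr(L)\equiv d$ or $d+3 \pmod 6$ controlling the sign of $\xi^{m-d}$, the factorization $(1-T^4)(1-T^6)=(1-q)^2(1+q)(1-\zeta q)(1-\zeta^{-1}q)$, and the sanity check $\chi=1$ at the minimal weight) all check out.
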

\begin{proof}
Recall that $3\Tr(L) \equiv d \pmod{3}$. Therefore, since $d \not \equiv e \pmod{2}$, we have $3\Tr(L) \equiv d \pmod{6}$ in the cyclic case, and $3\Tr(L) \equiv d+3\pmod{6}$ in the noncyclic case.
  
  Consider the Euler-Poincare series $\sum_{k \in \ZZ} \dim M_{k}(\rho,L)T^k$.  On one hand, 
\[
\sum_{k \in\ZZ} \dim M_{k}(\rho,L)T^k = \frac{T^{k_1}+T^{k_2}+T^{k_3}+T^{k_4}}{(1-T^4)(1-T^6)}.
\]
On the other hand, by our identification of $\dim M_{k}(\rho,L)$ with an Euler characteristic, we deduce that
\[
  \frac{\sum_j T^{k_j}}{(1-T^4)(1-T^6)} = \sum_{\substack{k\geq 3\Tr(L)-3\\ d \not \equiv k \pmod{2}}}\left(\frac{5+k-3\Tr(L)}{3}-\frac{\xi^{k-d}}{3(1-\zeta)}+\frac{\zeta^{k-d}}{3(1-\zeta^{-1})}\right)T^k.
\]

First suppose that $3\Tr(L) \equiv d\pmod{6}$, so that $3\Tr(L)-3 \not \equiv d \pmod{2}$. Let $k_0 = 3\Tr(L)-3$, so that we deduce
\begin{align*}
  \frac{\sum_j T^{k_j}}{(1-T^4)(1-T^6)} &= \sum_{u \geq 0}\left(\frac{5+k_0-3\Tr(L)+2u}{3}-\frac{\xi^{k_0+2u-d}}{3(1-\zeta)}+\frac{\zeta^{k_0+2u-d}}{3(1-\zeta^{-1})}\right)T^{k_0+2u}\\
  &=T^{k_0}\sum_{u \geq 0}\left(\frac{2+2u}{3}-\frac{\xi^{k_0+2u-d}}{3(1-\zeta)}+\frac{\zeta^{k_0+2u-d}}{3(1-\zeta^{-1})}\right)T^{2u}\\
&=T^{k_0}\left(\frac{2}{3(1-T^2)^2}-\frac{1}{3(1-\zeta)}\frac{1}{1-\zeta T^2}-\frac{\zeta}{3(1-\zeta)}\frac{1}{1-\zeta^2T^2}\right)\\
&= \frac{T^{k_0}+T^{k_0+2}+T^{k_0+4}+T^{k_0+6}}{(1-T^4)(1-T^6)}
\end{align*}
It follows that the weights of the generators are as claimed. The proof in the noncyclic case when $3\Tr(L) \equiv d+3 \pmod{6}$ is analogous.
\end{proof}

\begin{ex}
  For fixed $\rho$ it is possible for both cases of Theorem \ref{t:weights} to occur as the exponents vary. To be concrete, \cite{TubaWenzl} shows that there exist real numbers $0 \leq a < b < c < d < 1$ such there is an irreducible representation $\rho$ of $\Gamma$ of rank $4$ with $\rho(T) = \diag(e^{2\pi i a}, e^{2\pi ib}, e^{2\pi i c}, e^{2\pi i d})$. Consider the two choices of exponents
  \begin{align*}
    L_1 &= \diag (a,b,c,d),\\
    L_2 &= \diag (a+1,b,c,d).
  \end{align*}
  Then $3\Tr(L_1) \not \equiv 3\Tr(L_2) \pmod{2}$, so that both cases of Theorem \ref{t:weights} are realized by $M(\rho,L_1)$ and $M(\rho,L_2)$.
\end{ex}
\begin{ex}
  In the opposite direction to the previous example, for some exceptional representations $\rho$ all choices of exponents give rise to the same case of Theorem \ref{t:weights}. For example, there is a choice of $\rho$ in the isomorphism class of the symmetric cube of the inclusion $\Gamma \hookrightarrow \GL_2(\CC)$ such that $\rho(T)$ is a single Jordan block with eigenvalue $1$. The possible choices of exponent matrices are then of the form $L_n = n + N$ where $n \in \ZZ$ and $N$ is nilpotent. Hence $3\Tr(L_n) = 12n \equiv 0 \pmod{2}$. However, $\rho$ is an odd representation, so that $(\rho,L_n)$ always corresponds to the cyclic case of Theorem \ref{t:weights}. 
\end{ex}

\section{The cyclic case}
\label{s:cyclic}
Let $\rho$ be an irreducible representation of $\Gamma$ of rank four, let $L$ denote a choice of exponents for $\rho$, and assume that $M(\rho,L)$ is cyclic as in part (1) of Theorem \ref{t:weights}. By Theorem \ref{t:weights}, the least weight of a  nonzero holomorphic modular forms for $\rho$ is $k_1= 3\Tr(L){-}3$, and such a form is \emph{unique} up to rescaling. Let $F$ be a nonzero form in the $1$-dimensional vector space $M_{k_1}(\rho,L)$. By cyclicity, $F$ must satisfy a differential equation of the form
\[
  D^4F + aE_4D^2F + bE_6DF + cE_4^2F = 0,
\]
for scalars $a,b,c \in \CC$ (see Lemma 8 of \cite{FrancMason4}). Following the computations leading to Example 16 of \cite{FrancMason2}, if $K= 1728/j$ and $\theta = Kd/dK$, then the form $\tilde F = \eta^{{-}2k_1}F$ of weight zero satisfies the ordinary differential equation
\begin{align}
  \label{eq:cyclicODE} 	
 \theta^{4}\tilde F - \left(\frac{2 K + 1}{1-K}\right) \theta^{3}\tilde F +
\left(\frac{44 K^{2} - \left( 36a + 28\right) K + 36a + 11}{36(1-K)^2}\right) \theta^{2}\tilde F& \\
\nonumber  + \left(\frac{8K^{2} - \left(12a +  36b + 4\right) K - 6 a+ 36b - 1}{36(1-K)^2}\right) \theta \tilde F + \frac{c}{(1-K)^2}\tilde F&=0.
\end{align}

We would like to find expressions for the constants $a$, $b$ and $c$ in equation \eqref{eq:cyclicODE} in terms of the data of the monodromy representation $\rho$ and the exponents $L$. The key point is that the differential equation \eqref{eq:cyclicODE} has regular singularities at $0$, $1$ and $\infty$. What is classically known as the indicial equation of \eqref{eq:cyclicODE} at a singular point can be computed as the characteristic polynomial of the matrix of exponents $L$ used to extend the flat bundle $\cV(\rho)$ to the cusp. For a clear explanation of the relationship between the classical indicial equation and the exponent matrix $L$, or residue of the holomorphic connection $\cV(\rho,L)$, see Section VI of \cite{Katz}.

One slightly technical point is that  \eqref{eq:cyclicODE} is the differential equation satisfied by the rescaled form $\tilde F$ of weight zero. Multiplying $F$ by $\eta^{-2k_1}$ to obtain $\tilde F$ corresponds to tensoring the bundle $\cV_{k_1}(\rho,L)$ with a line bundle, which at the level of exponents amounts to nothing more than shifting the exponents at the cusp by $-\frac{1}{12}k_1 = \frac{1}{4}(1-\Tr(L))$. See Remark 3.12 of \cite{CandeloriFranc1} for a discussion of this point. Thus, if the exponents of $\rho(T)$ are $e_1$, $e_2$, $e_3$ and $e_4$, so that $\Tr(L) = \sum e_j$, then the exponents of equation \eqref{eq:cyclicODE} are $f_j = e_j + \frac{1}{4}(1-\Tr(L))$ for $j=1,2,3,4$. On the other hand, since $K = 1728/j$, on the $K$-line the cusp of $X$ corresponds to $K= 0$. Since $\theta = Kd/dK$, the indicial polynomial of equation \eqref{eq:cyclicODE} at the cusp is
\[
  \prod_{j=1}^4\left(x-f_j\right) = x^4-x^3+\left(a+\tfrac{11}{36}\right)x^2 -\left(\tfrac 16 a -b+\tfrac 1{36}\right)x + c. 
\]
Therefore, if the eigenvalues of $L$ are $e_1$, $e_2$, $e_3$, $e_4$, then we find that
\begin{align*}
  a &= \sigma_2(f){-}\tfrac {11}{36},\\
  b &= -\sigma_3(f){+}\tfrac{1}{6}a {+} \tfrac{1}{36},\\
  c&= \sigma_4(f),
\end{align*}
where $f_j = e_j + \frac{1}{4}(1-\Tr(L))$ and $\sigma_d(f)$ is the usual elementary symmetric polynomial of degree $d$ in the variables $f_1$, $f_2$, $f_3$, $f_4$. Thus, from $\rho$ and $L$ we can write down a precise differential equation such that a basis of solutions  to the equation are the coordinates of a minimal weight form in $M_{k_1}(\rho,L)$. Then by cyclicity, an $M(1)$-basis of forms for $M(\rho,L)$ will be given by $F$, $DF$, $D^2F$ and $D^3F$, where $D$ denotes the modular derivative.

Unfortunately it is not easy to solve  \eqref{eq:cyclicODE} in general. For example, no specialization of $a$, $b$ and $c$ leads to a generalized hypergeometric equation in the sense of \cite{BeukersHeckman}. In Sections \ref{s:tensorproduct}, \ref{s:symmetriccube} and \ref{s:induction} below we will use functorial linear algebraic constructions to solve this equation in many cases, and thereby describe the corresponding spaces of vector-valued modular forms.

\section{The noncyclic case}
\label{s:noncyclic}

In this section $\rho$ still denotes an irreducible representation of $\Gamma$ of rank four, and $L$ denotes a choice of exponents for $\rho$, but now we consider the case when $M(\rho,L)$ is \emph{not} a cyclic module over $M(1)\langle D\rangle$. In this case Theorem \ref{t:weights} shows that there exist generators for $M(\rho,L)$ in weights $k_1$, $k_1+2$, $k_1+2$ and $k_1+4$, where $k_1 = 3\Tr(L)-2$.

\begin{lem}
Suppose that $(\rho,L)$ is an irredicible presentation $\rho$ of rank $4$ and a choice of exponents $L$ for $\rho$, such that the noncyclic case of Theorem \ref{t:weights} holds. Then there exists an $M(1)$-basis for $M(\rho,L)$ of the form $F$, $DF$, $G$, $H$, with $F \in M_{k_1}(\rho,L)$, $G \in M_{k_1+2}(\rho,L)$ and $H \in M_{k_1+4}(\rho,L)$, such that the matrix of the modular derivative $D$ in this basis satisfies
\begin{equation}
\label{eq:matrixMLDE}
  D(F,DF,G,H) = (F,DF,G,H)\left(\begin{matrix}
      0&aE_4&E_4&0\\
      1&0&0&bE_4\\
      0&0&0&cE_4\\
      0&1&0&0
    \end{matrix}
  \right),
\end{equation}
for complex scalars $a$, $b$ and $c$, with $c \neq 0$.
\end{lem}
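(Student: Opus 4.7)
The plan is to build the basis in stages, with irreducibility of $\rho$ used crucially at two points. Having recorded the dimensions $\dim M_{k_1}(\rho,L)=1$, $\dim M_{k_1+2}(\rho,L)=\dim M_{k_1+4}(\rho,L)=2$, and $\dim M_{k_1+6}(\rho,L)=3$ (all read off from Theorem \ref{t:weights} together with the generating function of $M(1)$), I would let $F$ span $M_{k_1}(\rho,L)$. The standard observation that a form $\phi \in M_k(\rho,L)$ with $D\phi=0$ has the shape $\eta^{2k}v$ with $v\in\CC^4$ spanning a one-dimensional subrepresentation of $\rho$ --- impossible by irreducibility --- forces $DF\neq 0$ and, by dimension counting, makes $D\colon M_{k_1+2}(\rho,L)\to M_{k_1+4}(\rho,L)$ an isomorphism of two-dimensional spaces. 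I then define $G\in M_{k_1+2}(\rho,L)$ to be the unique element with $DG=E_4F$, which is already the third column of \eqref{eq:matrixMLDE}.

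The key technical step is the claim $D^2F\notin\CC\cdot E_4F$. If instead $D^2F=\lambda E_4F$ for some $\lambda\in\CC$, then each of the four scalar components of $F$ satisfies the same second-order homogeneous linear ODE in $\tau$ (obtained by expanding $D^2-\lambda E_4$ into a differential operator with coefficients in $\CC[E_2,E_4]$), so the complex span of these four components has dimension at most two. Consequently the annihilator $\{v\in(\CC^4)^*\mid v^TF\equiv 0\}$ has dimension at least two; the transformation law $F(\gamma\tau)=(c\tau+d)^{k_1}\rho(\gamma)F(\tau)$ shows it is stable under $v\mapsto\rho(\gamma)^Tv$, and so it is a proper nonzero subrepresentation of $\rho^*$, contradicting irreducibility. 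With $D^2F\notin\CC E_4F$ in hand, I would expand
\[
D^3F = \alpha E_6F + \beta E_4DF + \gamma E_4G
\]
in the basis $\{E_6F,E_4DF,E_4G\}$ of $M_{k_1+6}(\rho,L)$, and set $a\df -3\alpha$, $H\df D^2F-aE_4F$, $b\df\beta+3\alpha$, $c\df\gamma$. Then $D^2F=aE_4F+H$ by construction, and the Ramanujan identity $DE_4=-E_6/3$ gives
\[
DH = D^3F - a\bigl(-\tfrac{1}{3}E_6F+E_4DF\bigr) = \bigl(\alpha+\tfrac{a}{3}\bigr)E_6F + (\beta-a)E_4DF + \gamma E_4G = bE_4DF+cE_4G,
\]
the choice $a=-3\alpha$ being made precisely to kill the $E_6F$ coefficient. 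This matches the remaining columns of \eqref{eq:matrixMLDE}.

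What remains is to verify that $F,DF,G,H$ is a free $M(1)$-basis of $M(\rho,L)$, and to show $c\neq 0$. The basis claim is a graded Nakayama argument: in $M(\rho,L)/(E_4,E_6)M(\rho,L)$ the nonzero graded pieces occur in weights $k_1,k_1+2,k_1+4$ with dimensions $1,2,1$, and are represented respectively by $F$; by $DF,G$ (independent because their images under the isomorphism $D$, namely $D^2F=aE_4F+H$ and $DG=E_4F$, are independent in view of $H\notin\CC E_4F$); and by $H$ (nonzero modulo $\CC E_4F$ since $H\equiv D^2F\not\equiv 0$). The main obstacle is the final step $c\neq 0$, which I would deduce from irreducibility by a second application of the same philosophy. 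If $c=0$, then $DF$, $D(DF)=aE_4F+H$, and $DH=bE_4DF$ all lie in $N\df M(1)F+M(1)DF+M(1)H$; combined with the Leibniz rule this shows $N$ is stable under $D$. Since $F,DF,H$ are part of the free $M(1)$-basis just established, $N$ has rank exactly three and is a proper nonzero graded $M(1)\langle D\rangle$-submodule of $M(\rho,L)$. Restricting to $Y(1)$, $N$ gives a flat subbundle of $\cV(\rho)$ of rank three, which by the Riemann--Hilbert dictionary recalled in Section \ref{s:vvmfs} corresponds to a proper nonzero subrepresentation of $\rho$, contradicting irreducibility. Hence $c\neq 0$ and the proof is complete.
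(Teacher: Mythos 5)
Your proof is correct and follows the same skeleton as the paper's: read off the graded dimensions from Theorem \ref{t:weights}, start from a nonzero $F$ spanning the one-dimensional minimal-weight space, and normalize a free basis using irreducibility. The differences are in execution and in rigor. Where the paper picks an arbitrary complement $G$ of $DF$ and an arbitrary completion $H$ and then adjusts them in several steps, you define $G$ directly as the preimage of $E_4F$ under the isomorphism $D\colon M_{k_1+2}(\rho,L)\to M_{k_1+4}(\rho,L)$ and read $a$, $b$, $c$ off the expansion of $D^3F$; this is cleaner and lands on the same matrix. More importantly, you supply proofs for the two points the paper only asserts. The paper's claim that ``$b$ must be nonzero by irreducibility'' is exactly your statement $D^2F\notin\CC E_4F$, and your annihilator argument (the components of $F$ would satisfy a common second-order ODE, so their span has dimension at most two, producing a proper nonzero subrepresentation of $\rho^*$) is the right justification, in the spirit of \cite{Mason1}. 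And the paper's proof never actually addresses the final assertion $c\neq 0$ --- its phrase ``where again $c\neq 0$'' refers to the intermediate normalization $DG=cE_4F$, not to the coefficient of $E_4G$ in $DH$ --- whereas your observation that $c=0$ would make $M(1)F+M(1)DF+M(1)H$ a $D$-stable rank-three submodule, hence a flat subbundle and a three-dimensional subrepresentation, closes that gap. (Equivalently and a bit more elementarily: $c=0$ forces $F$ to satisfy a third-order MLDE, so its four components span at most a three-dimensional space, contradicting irreducibility by the same annihilator argument; this avoids any appeal to saturation of subsheaves.) The only presentational quibble is that you use the basis $\{E_6F,E_4DF,E_4G\}$ of $M_{k_1+6}(\rho,L)$ before explicitly recording that $DF$ and $G$ are independent, a fact you only note later in the Nakayama paragraph; the logic is all present, but the order should be swapped.
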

\begin{proof}
As in Section \ref{s:cyclic}, let $F$ denote a nonzero form in the $1$-dimensional vector space $M_{k_1}(\rho,L)$. Observe that as above, since $\rho$ is irreducible, $DF$ is nonzero. It follows that there is a form $G$ of weight $k_1+2$ such that $DF$ and $G$ constitute a $\CC$-basis for $M_{k_1+2}(\rho,L)$. Let $H\in M_{k_1+4}(\rho,L)$ complete $F,DF,G$ to a free basis for $M(\rho,L)$. Observe that $D^2F=aE_4F+bH$ and $DG=cE_4F+dH$ for scalars $a$, $b$, $c$, $d$, and $b$ must be nonzero by irreducibility of $\rho$. Thus, we may as well take $b=1$ so that $D^2F = aE_4F + H$. After this adjustment, we can add a multiple of $DF$ to $G$ in order to assume that $DG = cE_4F$, where again $c \neq 0$. If necessary, we may rescale $G$ to achieve $DG = E_4F$. Finally, after replacing $H$ by some multiple $H-\alpha E_4F$, we may assume that $DH$ is a linear combination of $E_4DF$ and $E_4G$. This establishes that we may find a free basis for $M(\rho,L)$ of type $(F, DF, G, H)$ as in the Lemma (after a relabelling of variables).
\end{proof}

\begin{rmk}
  Observe that the existence of three free parameters above matches up with the fact that the moduli space of $4$-dimensional irreducible representations of $\Gamma$ is $3$-dimensional.
\end{rmk}

As in the cyclic case, equation \eqref{eq:matrixMLDE} suffers from the fact that it does not involve modular forms of weight $0$, and hence it does not concern sections of a flat bundle.\ Rather, it concerns sections of twists of a flat bundle.\ To get around this we will instead look for the differential equation satisfied by $F$, $(E_4/E_6)DF$, $(E_4/E_6)G$ and $(1/E_4)H$.\ A straightforward computations then shows that 
\[\cF = (F,A^{-1}DF, A^{-1}G,E_4^{-1}H)\]
satisfies the matrix differential equation
\[
  D\cF = \cF\left(\begin{matrix}
      0&aE_4A^{-1}&E_4A^{-1}&0\\
      A&-D(A)A^{-1}&0&bA\\
      0&0&-D(A)A^{-1}&cA\\
      0&E_4A^{-1}&0&\frac{1}{3}A
    \end{matrix}
  \right)
\]
where $A = E_6/E_4$. Now all entries in the matrix above are of weight $2$, at the expense of our having introduced some poles at singular points corresponding to the zeros of $E_4$ and $E_6$. In this equation both $\cF$ and the differential operator $D$ are in weight $k_1$.  Again using that $D(\eta) = 0$, we can replace $\cF$ by $\tilde \cF = \eta^{-2k_1}\cF$ to shift this equation to weight $0$. After this change, $D=D_0 = A\theta_K$ for $\theta_K = Kd/dK$ and we have
\begin{align*}
 E_4 &=\frac{A^2}{1-K}, & E_6 &=\frac{A^3}{1-K}, & D(A) &= -A^2\frac{1+2K}{6(1-K)}.
\end{align*}
Thus, after performing these substitutions, solving equation \eqref{eq:matrixMLDE} is equivalent to solving the following matrix ordinary differential equation on the $K$-line:
\begin{equation}
  \label{eq:noncyclicMODE}
  \theta_K\tilde \cF = \tilde \cF\left(\begin{matrix}
      0&\frac{a}{1-K}&\frac{1}{1-K}&0\\
      1&\frac{1+2K}{6(1-K)}&0&b\\
      0&0&\frac{1+2K}{6(1-K)}&c\\
      0&\frac{1}{1-K}&0&\frac{1}{3}
    \end{matrix}
  \right).
\end{equation}

As before we would like to get expressions for $a$, $b$ and $c$ in terms of the data of the representation $\rho$ and the exponent matrix $L$. Again, we use the indicial equation at the cusp, corresponding to $K=0$. If $e_1$, $e_2$, $e_3$ and $e_4$ are the eigenvalues of the exponent matrix $L$, then define
\[f_j = e_j -\frac{1}{12}k_1 = e_j -\frac{1}{4}\Tr(L) + \frac{1}{6}.\]
The indicial equation at $K=0$ of the matrix differential equation \eqref{eq:noncyclicMODE} is the characteristic polynomial of the matrix obtained by setting $K = 0$ in \eqref{eq:noncyclicMODE}. By comparison with the description \cite{Katz} of the indicial polynomial as the characteristic polynomial of the residue $L$ of the flat connection corresponding to $\rho$ (or rather, corresponding to the twist of $\rho$ by the character of $\eta^{-2k_1}$), we obtain the identity
\begin{align*}
\prod_{j=1}^4(x-f_j) &=  \textrm{charpoly}\left(\begin{matrix}
      0&a&1&0\\
      1&\frac{1}{6}&0&b\\
      0&0&\frac{1}{6}&c\\
      0&1&0&\frac{1}{3}
    \end{matrix}
  \right)\\
	&= x^{4} - \frac{2}{3} x^{3} + \left(- a -  b + \frac{5}{36}\right) x^{2} + \left(\frac{1}{2} a + \frac{1}{6} b - \frac{1}{108}\right) x - \frac{1}{18} a -  c
\end{align*}
Therefore, we deduce that
\begin{align*}
  a &= -3\sigma_3(f)+\frac{1}{2}\sigma_2(f) -\frac{1}{24},\\
  b&= 3\sigma_3(f)-\frac{3}{2}\sigma_2(f)+\frac{13}{72},\\
  c &= -\sigma_4(f)-\frac{1}{18}a.
\end{align*}
This yields a precise matrix differential equation expressed in terms of the data of $\rho$ and $L$ whose solutions can be used to produce a basis for $M(\rho,L)$ over $M(1)$ as discussed above.

Next, by performing a cyclic vector computation using \emph{Sage}, we find that $\tilde F$ satisfies the following scalar differential equation:
\begin{align}
  \label{eq:noncyclicODE}
  \theta_K^4\tilde F  -\frac{(7K+2)}{3(1-K)}\theta^3_K\tilde F + \frac{(56K^2+36K(a+b)-34K-36(a+b)+5)}{36(1-K)^2}\theta_K^2\tilde F&\\
\nonumber  +\frac{(32K^2+36bK-22K+54a+18b-1)}{108(1-K)^2}\theta_K\tilde F-\frac{(2aK+a+18c)}{18(1-K)^2}\tilde F&=0
\end{align}

Unfortunately, like its cyclic predecessor \eqref{eq:cyclicODE}, equation \eqref{eq:noncyclicODE} cannot be solved in exact terms as a series (in general). In the remaining sections we will explain how to use tensor products, symmetric powers and induction to solve some of these equations, and thereby obtain explicit formulas for the corresponding modular forms.\ Unfortunately we can only treat certain one and two parameter families of representations this way.\ Since the moduli space of $4$-dimensional irreducible representations of $\Gamma$ is $3$-dimensional, this means that we miss most representations.\ In the remaining cases one can still solve equations \eqref{eq:cyclicODE} and \eqref{eq:noncyclicODE} recursively to obtain $q$-expansions of the corresponding modular forms, and this is often good enough for applications.\ We end this section by summarizing the steps for performing such computations.

Suppose given a $4$-dimensional complex representation $\rho$ of $\Gamma$ and an exponent matrix $L$ such that $\rho(T) = e^{2\pi iL}$. The module $M(\rho,L)$ of modular forms can be described as follows:
\begin{enumerate}
\item[(a)] Let $0 \leq d \leq 5$ satisfy $\Tr(\rho(R)) = -e^{-2\pi id/6}$.
\item[(b)] If $3\Tr(L) \equiv d\pmod{2}$ then $M(\rho,L)$ is cyclic with minimal weight $k_1 = 3\Tr(L)-3$. Otherwise $M(\rho,L)$ is noncyclic and $k_1 = 3\Tr(L)-2$.
\item[(c)] Compute the parameters $a,b,c$ of the relevant differential equation using the formulas from Section \ref{s:cyclic} or Section \ref{s:noncyclic}, depending on whether $M(\rho,L)$ is cyclic or not.
\item[(d)] Find a basis of solutions to the cyclic equation \eqref{eq:cyclicODE} or the noncyclic equation \eqref{eq:noncyclicODE} near $K=0$. In general one can only hope to recursively compute a finite number of Taylor coefficients.
\item[(e)] Let $\tilde F$ be the vector-valued function whose coordinates are the basis found in step (4). Substitute the $q$-expansion of $K = 1728/j$ into $\tilde F$ to obtain the $q$-expansion of $\tilde F$.
\item[(f)] Set $F = \eta^{2k_1}\tilde F$, which is then a modular form of minimal weight $k_1$.
\end{enumerate}
Due to the nonuniqueness of the choice of basis from step (d), there exists a conjugate pair $(\rho',L')$ to $(\rho,L)$ such that $F \in M_{k_1}(\rho',L')$. In the cyclic case, an $M(1)$-basis for $M(\rho',L')$ is given by $F$, $DF$, $D^2F$ and $D^3F$, where $D$ denotes the usual modular derivative. In the noncyclic case an $M(1)$-basis is given by $F$, $DF$, $H = D^2F-aE_4F$ and $G = \frac{1}{cE_4}(DH-bE_4DF)$. A basis for $M(\rho,L)$ can then be found by multiplying forms in $M(\rho',L')$ by an appropriately chosen change of basis matrix. It can be difficult to find such a change of basis matrix exactly, but thankfully in many applications (e.g. studying modular forms on finite index subgroups, or studying questions about unbounded denominators) one can work with $M(\rho',L')$ in place of $M(\rho,L)$, without knowing \emph{exactly} what conjugate representation $\rho'$ is.

\section{Tensor products}
\label{s:tensorproduct}

Let $(\alpha,\beta)$ denote a pair of $2$-dimensional representations of $\Gamma$. If $f = (f_1,f_2)^T$ and $g = (g_1,g_2)^T$ are vector-valued forms of weight $k$ and $l$, respectively, that transform under $\alpha$ and $\beta$, respectively, then the form $F = (f_1g_1,f_1g_2,f_2g_1,f_2g_2)^T$ transforms under $\alpha \otimes \beta$ and is of weight $k+l$. In this section we show that when $\alpha \otimes \beta$ is irreducible and  $f$ and $g$ are of minimal weight, and if one works with the appropriate tensor product exponents of $\alpha \otimes \beta$, then $F$ is a form of minimal weight for $\alpha \otimes \beta$. Using Theorem \ref{t:rank2} and the results of Sections \ref{s:cyclic} and \ref{s:noncyclic}, this allows us to describe the corresponding module of modular forms of rank $4$.

First we want to identify when the tensor product $\alpha\otimes\beta$ of a pair of $2$-dimensional representations of $\Gamma$ is irreducible. In order to state the result we introduce a piece of notation and a piece of terminology. Let $\nu$ denote the standard inclusion representation $\nu\colon \Gamma \hookrightarrow \GL_2(\CC)$. If $\chi$ is a $1$-dimensional representation of $\Gamma$ then write $\nu_\chi = \nu \otimes \chi$. 
\begin{dfn}
  \label{d:Tregular}
A representation $\rho$ of $\Gamma$ is said to be \emph{$T$-regular} provided that the eigenvalues of $\rho(T)$ are \emph{pairwise distinct}.
\end{dfn}

Recall from Section \ref{s:rank2} that every $2$-dimensional irreducible representation $\rho$ of $\Gamma$ is either $T$-regular, or else $\rho \cong \nu_\chi$ for some  $\chi$. 
\begin{thm}
  \label{t:irrtp}
  Let $\alpha$ and $\beta$ denote a pair of $2$-dimensional representations of $\Gamma$. Then $\alpha\otimes \beta$ is irreducible precisely when $\alpha$ and $\beta$ are both irreducible, and one of the following additional conditions holds:
  \begin{enumerate}
  \item exactly one of $\alpha$ or $\beta$ is $T$-regular;
  \item all three of $\alpha$, $\beta$ and $\alpha \otimes \beta$ are $T$-regular.
  \end{enumerate}
\end{thm}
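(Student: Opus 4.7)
The biconditional can be treated one direction at a time. First, for the forward direction: if $\alpha$ or $\beta$ is reducible, a proper invariant subspace $W$ yields the proper invariant subspace $W \otimes \beta$ or $\alpha \otimes W$ of $\alpha \otimes \beta$, so $\alpha \otimes \beta$ is reducible. If both are irreducible but both non-$T$-regular, the classification of Section \ref{s:rank2} gives $\alpha \cong \nu \otimes \chi_1$ and $\beta \cong \nu \otimes \chi_2$ for characters $\chi_i$, and the decomposition $\nu \otimes \nu \cong \Sym^2 \nu \oplus \wedge^2 \nu$ shows $\alpha \otimes \beta$ is reducible. Finally, if both are $T$-regular but $\alpha \otimes \beta$ is not, then two of the four products $x_iy_j$ (the eigenvalues of $(\alpha \otimes \beta)(T)$) must coincide nontrivially, which by a short calculation forces the multisets $\{x_1,y_1\}$ and $\{x_2,y_2\}$ to be related by a common scalar $c$. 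Since $c^2 = \xi^{a_2 - a_1}$ is a sixth root of unity, $c$ is a twelfth root of unity, hence equals $\chi(T)$ for some $1$-dimensional character $\chi$ of $\Gamma$, and the uniqueness clause in Section \ref{s:rank2} gives $\alpha \cong \beta \otimes \chi$. Therefore $\alpha \otimes \beta \cong (\Sym^2\beta \oplus \wedge^2\beta) \otimes \chi$ is reducible.

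For the converse in case (1), say $\beta \cong \nu \otimes \chi$. The identity $\alpha \otimes \beta \cong (\alpha \otimes \chi) \otimes \nu$ reduces us to showing $\gamma \otimes \nu$ is irreducible whenever $\gamma$ is $T$-regular irreducible of dimension two. In a basis diagonalizing $\gamma(T) = \diag(x,y)$ with $x \neq y$, the matrix $(\gamma \otimes \nu)(T)$ consists of two $2 \times 2$ Jordan blocks with distinct eigenvalues $x,y$. A direct enumeration shows the $T$-invariant subspaces are: two $1$-dimensional eigenlines $\CC(e_i \otimes f_1)$; three $2$-dimensional subspaces, namely the generalized eigenspaces $\CC e_i \otimes \nu$ and the $T$-eigenvector plane $\gamma \otimes \CC f_1$; and, dually, two $3$-dimensional subspaces. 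Each of these has the product form $U \otimes \nu$ or $\gamma \otimes V$ for a proper $U \subset \gamma$ or $V \subset \nu$, so $\Gamma$-invariance would force $\Gamma$-invariance of $U$ or $V$, contradicting the irreducibility of $\gamma$ or $\nu$. For the $3$-dimensional case it is cleaner to dualize, observing that $(\gamma \otimes \nu)^* \cong \gamma^* \otimes \nu^*$ has the same structural features so that ruling out $1$-dimensional invariants there transfers back.

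For the converse in case (2), both $\alpha$ and $\beta$ are $T$-regular irreducible and $\alpha \otimes \beta$ is $T$-regular, so in $T$-diagonal bases the four eigenlines of $(\alpha \otimes \beta)(T)$ are the lines $\CC(e_i \otimes f_j)$, and every proper $T$-invariant subspace is a direct sum of some of these. The $1$- and $3$-dimensional subspaces are ruled out as in the previous paragraph, as are the four product-type $2$-dimensional subspaces $\CC e_i \otimes \beta$ and $\alpha \otimes \CC f_j$. The key remaining case is the two ``diagonal'' candidates $\CC(e_1 \otimes f_1) \oplus \CC(e_2 \otimes f_2)$ and $\CC(e_1 \otimes f_2) \oplus \CC(e_2 \otimes f_1)$. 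In the $T$-diagonal basis, irreducibility of $\alpha$ forces $\alpha(S)$ to take the form $\stwomat{s_{11}}{s_{12}}{s_{21}}{-s_{11}}$ with $s_{12}, s_{21} \neq 0$, and $S$-invariance of either diagonal candidate would force $s_{11} = 0$ (and a similar vanishing for $\beta$). However, directly transforming the Tuba--Wenzl normal form of $\alpha(S)$ from Section \ref{s:rank2} into the $T$-diagonal basis produces the diagonal entry $-\zeta^{2a}xy/(y-x)$, proportional to the nonzero root of unity $xy = \xi^a$. This verification that the diagonal entries of $\alpha(S)$ in the $T$-diagonal basis are nonzero is the main technical step; everything else is a combinatorial enumeration based on the eigenvalue structure of $(\alpha \otimes \beta)(T)$.
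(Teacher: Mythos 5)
Your proof is correct. For the sufficiency directions it travels essentially the same road as the paper: diagonalize the $T$-matrices, enumerate the $T$-invariant subspaces, and kill each one using nonvanishing of entries of the image of $S$. Your computation that the diagonal entry of $\alpha(S)$ in the $T$-diagonal basis is $-\zeta^{2a}xy/(y-x)\neq 0$ is precisely the fact the paper uses when it asserts $abcefg\neq 0$ for $\alpha(S)=\stwomat{a}{b}{c}{-a}$; the paper then disposes of \emph{all} proper sums of eigenlines in one stroke by noting that $\rho(S)v$ projects nontrivially onto every $T$-eigenspace, whereas you isolate the two ``diagonal'' planes as the only nontrivial case --- the same verification, organized differently. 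Where you genuinely diverge is the necessity of $T$-regularity of $\alpha\otimes\beta$ in case (2): the paper simply cites the main theorem of Tuba--Wenzl, while you argue directly that a coincidence $x_1y_2=x_2y_1$ or $x_1y_1=x_2y_2$ forces $\alpha\cong\beta\otimes\chi^k$ via the uniqueness clause of the rank-two classification, whence $\alpha\otimes\beta\cong(\Sym^2\beta\oplus\Lambda^2\beta)\otimes\chi^k$ is reducible. That argument is self-contained and buys independence from the Tuba--Wenzl classification at the cost of a small eigenvalue computation; it is a nice alternative. One cosmetic imprecision: the one-dimensional $T$-invariant subspaces are pure-tensor lines $\CC e_i\otimes\CC f_1$, not of the form $U\otimes\nu$ or $\gamma\otimes V$ as you state; but invariance of a pure-tensor line forces invariance of each factor line, which is all you need, so nothing breaks.
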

\begin{proof}
Clearly, $\alpha$ and $\beta$ must be irreducible. We may assume that each of the $T$-matrices $\alpha(T)$ and $\beta(T)$ is either diagonal with distinct eigenvalues, or else it is a single Jordan block of the form $\stwomat{\lambda}{1}{0}{\lambda}$.\ The latter type only appear when the $2$-dimensional irreducible in question is isomorphic with some $\nu_\chi$. Therefore, if \emph{both} $\alpha(T)$ and $\beta(T)$ are single Jordan blocks, then $\rho = \alpha\otimes\beta$ is equivalent to $\nu\otimes \nu \otimes \psi$ for some $1$-dimensional representation $\psi$. This is \emph{not} irreducible, since $\nu^{\otimes 2}\cong S^2\nu\oplus \Lambda^2\nu$.

Assume that (1) holds, so that just one of $\alpha(T)$ and $\beta(T)$, let's say the latter, is a single Jordan block. To prove that $\rho$ is irreducible, we may assume without loss of generality that $\beta = \nu$ and $\alpha(T)$ is diagonal with distinct eigenvalues.\ Then $\rho(T)$ has eigenvectors that span a $2$-dimensional subspace $E$ of the linear space $V$ furnished by $\rho$, and if $E'=\rho(S)E$ then $E\cap E'=0$. Now suppose that $W \subseteq V$ is a nonzero $\Gamma$-submodule. If $W$ contains two eigenvectors for $\rho(T)$ then $E \subseteq W$ implies that $E\oplus E' \subseteq W$, an impossibility. Otherwise we must have $\dim W = 2$ and $W$ contains a single eigenvector. A straightforward computation shows that this is also impossible.

Thus, it remains to consider the case when $\alpha$ and $\beta$ are both irreducible and $T$-regular. We must show in this case that $\rho$ is irreducible if, and only if, it is $T$-regular. Observe that $\rho(T)$ is diagonal and $\rho$ is 4-dimensional, so that the main Theorem of \cite{TubaWenzl} implies that if $\rho$ is irreducible, then $\rho$ is $T$-regular.

Conversely, suppose that all of $\alpha$, $\beta$ and $\rho$ are $T$-regular. Without loss of generality we may change bases and assume that:
\begin{align*}
\alpha(T)&=\left(\begin{array}{cc} e^{2\pi ir} & 0 \\0 & e^{2\pi i s}\end{array}\right), &
\alpha(S)&= \left(\begin{array}{cc}a & b \\ c & -a\end{array}\right)\\
\beta(T)&=\left(\begin{array}{cc} e^{2\pi it} & 0 \\0 & e^{2\pi i u}\end{array}\right), &
\beta(S)&= \left(\begin{array}{cc} e & f \\ g& -e\end{array}\right)\\
\rho(T) &= \left(\begin{array}{cccc}e^{2\pi ix_1} & 0 & 0 & 0 \\0 & e^{2\pi ix_2}  & 0 & 0 \\0 & 0 &
e^{2\pi ix_3}  & 0 \\0 & 0 & 0 & e^{2\pi i x_4} \end{array}\right), & \rho(S)&= \left(\begin{array}{cccc}ae & be & af & bf \\ce & -ae & cf & -af \\ ag & bg & -ae &-be  \\ cg & -ag & -ce & ae\end{array}\right).
\end{align*}
Then since $\alpha$ and $\beta$ are $2$-dimensional irreducible representations of $\Gamma$, we have $a^2+bc=\pm 1$,  $e^2+fg= \pm 1$ and $abcefg\neq 0$.

Assume by way of contradiction that $\rho$ is not irreducible. Any proper invariant subspace is a sum of eigenspaces for $T$. But clearly if $v$ is an eigenvector for $T$ then $\rho(S)v$ projects nontrivially onto every $T$-eigenspace on account of the non-vanishing of the entries for $\rho(S)$. This contradiction completes the proof of the Theorem.
\end{proof}

Now let $\alpha$ and $\beta$ be two-dimensional irreducible representations, and let $L_1$ and $L_2$ denote choices of exponents for each.\ On the open modular curve $Y = \Gamma \backslash \uhp$, the regular connections corresponding to $\alpha$, $\beta$ and $\alpha \otimes \beta$ satisfy
\[
  \cV(\alpha)\otimes \cV(\beta) \cong \cV(\alpha\otimes \beta).
\]
As discussed in Section \ref{s:vvmfs}, the choices of exponents define extensions of these vector bundles to the closed modular curve $X$. Then $\cV(\alpha,L_1) \otimes \cV(\beta,L_2)$ is \emph{an} extension of $\cV(\alpha\otimes \beta)$ to the cusp. Example \ref{ex:missinglattice} showed that this bundle need not be of the form $\cV(\alpha\otimes \beta,\Lambda)$ for any choice of exponents $\Lambda$ for $\alpha\otimes \beta$. However, a standard computation at the cusp, recalled below, shows that $\cV(\alpha,L_1)\otimes \cV(\beta,L_2)$ \emph{is} of this form for a precise choice of exponents:
\begin{dfn}
  \label{d:tensorproductexponents}
  The \emph{induced tensor product exponents} are the natural choice of exponents $L_1 \tpexp L_2$ for $(\alpha \otimes \beta)(T)$ such that
  \[
  \cV(\alpha,L_1)\otimes \cV(\beta,L_2) \cong \cV(\alpha \otimes \beta,L_1\tpexp L_2).
  \]
\end{dfn}
\begin{rmk}
  Beware that $L_1{\tpexp} L_2$ is \emph{not} the tensor product of the matrices $L_1$ and $L_2$. Also, in Definition \ref{d:tensorproductexponents} we do not need to assume that $\alpha$ and $\beta$ are rank $2$ or irreducible.
\end{rmk}

We next describe the tensor product exponents quite explicitly. This is a standard computation, but we include the details for completeness and ease of reading.

In full generality, assume that $\alpha$ and $\beta$ are of ranks $m$ and $n$, respectively.\ Recall that meromorphic modular functions $F$ and $G$ for $\alpha$ and $\beta$, respectively, extend holomorphically to the cusps in $\cV(\alpha,L_1)$ and $\cV(\beta,L_2)$ if, and only if, the functions $\tilde F(\tau) = e^{-2\pi iL_1\tau}F(\tau)$ and $\tilde G(\tau) = e^{-2\pi iL_2\tau}G(\tau)$ have holomorphic $q$-expansions. Observe that $L_1 \otimes I_n$ and $I_m \otimes L_2$ commute. Therefore,
\begin{align*}
  e^{-2\pi i (L_1\otimes I_n + I_m\otimes L_2)\tau}f(\tau)\otimes g(\tau) &= e^{-2\pi i I_m\otimes L_2\tau}e^{-2\pi iI_m\otimes L_2\tau}f(\tau)\otimes g(\tau)\\
                                                                          &=(I_m \otimes e^{-2\pi i L_2\tau})(e^{-2\pi i L_1\tau}\otimes I_n)f(\tau)\otimes g(\tau)\\
  &= (e^{-2\pi i L_1\tau}f(\tau))\otimes (e^{-2\pi i L_2\tau}g(\tau)).
\end{align*}
It follows that if $F$ and $G$ are local sections of $\cV(\alpha)$ and $\cV(\beta)$ near the cusp, so that $F \otimes G$ is a local section of $\cV(\alpha {\otimes} \beta)$ near the cusp, then $F\otimes  G$ extends to $\cV(\alpha \otimes \beta, L_1\otimes I_n +I_m\otimes L_2)$ if $f$ and $g$ extend to $\cV(\alpha,L_1)$ and $\cV(\beta,L_2)$ respectively. Therefore there is an inclusion
\[
  \cV(\alpha,L_1) \otimes \cV(\beta,L_2) \subseteq \cV(\alpha \otimes \beta, L_1{\otimes} I_n +I_m{\otimes} L_2)
\]
of sheaves. This inclusion is in fact an equality: suppose that a pure tensor $f\otimes g$ extends to $\cV(\alpha \otimes \beta,L_1\otimes I_n +I_m\otimes L_2)$, so that the product $(e^{-2\pi i L_1\tau}f(\tau)) \otimes (e^{-2\pi i L_2\tau}g(\tau))$ is holomorphic at the cusp. Assume at least one of the factors is not holomorphic at the cusp, say the first has a pole of order $N$. Then the second factor must vanish to order at least $N$ at the cusp. Observe that $f\otimes g = (f/j^N)\otimes (j^Ng)$ where $j$ denotes the usual $j$-function. Then $f/j^N$ extends to $\cV(\alpha,L_1)$ and $j^Ng$ extends to $\cV(\beta,L_2)$, so that we in fact have an equality of sheaves above (it suffices to treat pure tensors since a basis of flat sections at the cusp can be constructed using pure tensors). This identifies the tensor product exponents $L_1\otimes_e L_2$ as:
\begin{align}
  \label{eq:tensorproductexponents}
  L_1\tpexp L_2 &= L_1\otimes I_n {+} I_m\otimes L_2,\\
  \Tr(L_1\tpexp L_2) &= n\Tr(L_1){+}m\Tr(L_2).
\end{align}
Note that this argument works for general curves, with $j$ replaced by any nonzero holomorphic function with a pole of order $1$ at the cusp.

\begin{rmk}
  Observe that even if $L_1$ and $L_2$ are canonical, so that the real parts of the eigenvalues of both of them are contained in $[0,1)$, the induced tensor product exponents need not be canonical.
\end{rmk}

Now, for any irreducible representations $\alpha$ and $\beta$ of $\Gamma$ of rank $2$, Section \ref{s:rank2} provides a basis such that
\begin{align*}
  \alpha(T) &= \twomat xx0y, & \alpha(S) &= \zeta^{2a}\twomat{0}{-x}{y}{0}, & \alpha(R) &= \xi^{-a}\twomat{0}{-1}{1}{0},
\end{align*}
where $\xi = e^{2\pi i/6}$, $\zeta = \xi^2$, $xy = \xi^a$, and $x^2-xy+y^2 \neq 0$. Choose a similar basis for $\beta$ and write $\det \beta(T) = \xi^b$. If $\alpha(-1) = (-1)^{e_1}$ and $\beta(-1) = (-1)^{e_2}$ then $(\alpha \otimes \beta)(-1) = (-1)^{e_1+e_2}$. Note that $e_1 \equiv a+1\pmod{2}$ and $e_2 \equiv b+1\pmod{2}$, so that $e_1+e_2 \equiv a+b\pmod{2}$. We next show that the tensor product of minimal weight forms in $M(\alpha,L_1)$ and $M(\beta,L_2)$ defines a minimal weight form in $M(\alpha\otimes \beta,L_1\tpexp L_2)$ whenever $\alpha \otimes \beta$ is irreducible.
\begin{thm}
  \label{t:tpnoncyclic}
  Let $\alpha$ and $\beta$ denote irreducible representations of $\Gamma$ of rank $2$ such that $\alpha \otimes \beta$ is also irreducible. Let $L_1$ and $L_2$ denote choices of exponents for $\alpha$ and $\beta$, respectively. Then the following hold:
  \begin{enumerate}
    \item the $M(1)\langle D\rangle$-module $M(\alpha\otimes \beta, L_1\tpexp L_2)$ is \emph{not} cyclic;
    \item the minimal weight for $(\alpha\otimes \beta, L_1\tpexp L_2)$ is $k+l$ where $k = 6\Tr(L_1){-}1$ and $l = 6\Tr(L_2){-}1$ are the minimal weights for $(\alpha,L_1)$ and $(\beta, L_2)$, respectively;
    \item if $A$ and $B$ denote forms of minimal weight for $(\alpha,L_1)$ and $(\beta,L_2)$, respectively, then up to conjugation of $\alpha \otimes \beta$, an $M(1)$-basis for $M(\alpha \otimes \beta,L_1 \tpexp L_2)$ is given by the forms
      \begin{align*}
        F  &= A\otimes B,\\
        DF &= D(A)\otimes B+ A\otimes D(B),\\
        H &= D^2F-aE_4A\otimes B,\\
        G &= \frac{1}{cE_4}(DH-bE_4DF),        
      \end{align*}
      where the complex scalars $a$, $b$, and $c$ are computed from $(\alpha\otimes \beta,L_1 \tpexp L_2)$ as in Section \ref{s:noncyclic}.
    \end{enumerate}
  \end{thm}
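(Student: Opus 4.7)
My plan is to verify the three parts of the theorem sequentially, reducing each to results already in place.

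First, for (1), I would identify which case of Theorem \ref{t:weights} applies by computing $3\Tr(L_1\tpexp L_2)$ modulo $2$. By Theorem \ref{t:rank2}(1), the minimal weights $k=6\Tr(L_1){-}1$ and $l=6\Tr(L_2){-}1$ are integers; since a nonzero modular form of weight $k$ under $\alpha$ must satisfy $k\equiv e_1\pmod 2$ (applying the transformation law at $\gamma=-I$, where $\alpha(-I)=(-1)^{e_1}$), and similarly for $\beta$, I get $6\Tr(L_i)\equiv e_i{+}1\pmod 2$ for $i=1,2$. Using \eqref{eq:tensorproductexponents} this yields
\[
3\Tr(L_1\tpexp L_2)=6\Tr(L_1)+6\Tr(L_2)\equiv e_1+e_2\equiv e\pmod 2,
\]
where $(\alpha\otimes\beta)(-I)=(-1)^e$ and $e\equiv e_1+e_2\pmod 2$. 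By Theorem \ref{t:weights} this places $(\alpha\otimes\beta,L_1\tpexp L_2)$ in the noncyclic case, proving (1). Simultaneously that theorem yields the minimal weight $k_1=3\Tr(L_1\tpexp L_2){-}2=(6\Tr(L_1){-}1)+(6\Tr(L_2){-}1)=k+l$, proving (2).

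For (3), I would first check that $F\df A\otimes B$ is a nonzero element of $M_{k+l}(\alpha\otimes\beta,L_1\tpexp L_2)$, which combined with (2) forces $F$ to span the one-dimensional space of minimal weight forms. Nonvanishing follows from connectedness of $\uhp$ and holomorphy of $A$ and $B$; modularity in weight $k+l$ under $\alpha\otimes\beta$ follows from the product rule applied to the automorphy factor; and the cuspidal growth condition reduces to the identity
\[
e^{-2\pi i(L_1\tpexp L_2)\tau}(A\otimes B)(\tau)=\bigl(e^{-2\pi iL_1\tau}A(\tau)\bigr)\otimes\bigl(e^{-2\pi iL_2\tau}B(\tau)\bigr)
\]
established in the discussion preceding \eqref{eq:tensorproductexponents}. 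With $F$ in hand as a minimal weight form in the noncyclic module, the preceding lemma immediately supplies forms $G,H$ completing $(F,DF)$ to an $M(1)$-basis satisfying \eqref{eq:matrixMLDE}. Reading off the second and fourth columns of the matrix in \eqref{eq:matrixMLDE} forces
\[
H=D^2F-aE_4F,\qquad G=\tfrac{1}{cE_4}\bigl(DH-bE_4\,DF\bigr),
\]
where $a,b,c$ are uniquely determined by the indicial polynomial at $K=0$ (i.e., by the eigenvalues of $L_1\tpexp L_2$) via the formulas of Section \ref{s:noncyclic}; here $c\neq 0$ by irreducibility of $\alpha\otimes\beta$ (guaranteed by Theorem \ref{t:irrtp}), since otherwise the submodule $M(1)\langle D\rangle\cdot F$ would have rank strictly less than $4$ and yield a proper invariant subspace of the representation. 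The ``up to conjugation'' clause absorbs the fact that if Theorem \ref{t:rank2}(4) is invoked to write $A$ and $B$ explicitly via hypergeometric formulas $P_1A'$ and $P_2B'$, then $A\otimes B=(P_1\otimes P_2)(A'\otimes B')$ transforms under the conjugate representation $(P_1\otimes P_2)(\alpha\otimes\beta)(P_1\otimes P_2)^{-1}$.

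The argument is essentially bookkeeping once the parity check in (1) is in place, since every ingredient---the noncyclic Lemma, the tensor product exponent identity, and the rank two Theorem---has already been assembled. The most delicate point is the equality (rather than strict inclusion) of lattices $\cV(\alpha,L_1)\otimes\cV(\beta,L_2)=\cV(\alpha\otimes\beta,L_1\tpexp L_2)$; this is exactly the surjectivity half of the computation preceding \eqref{eq:tensorproductexponents}, which exploits the fact that pure tensors can be made to cancel arbitrary poles via multiplication and division by powers of $j$.
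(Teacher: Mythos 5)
Your proposal is correct and follows the same overall architecture as the paper's proof: reduce (1) to the parity test of Theorem \ref{t:weights}, read off (2) from the noncyclic weight formula, and obtain (3) by feeding the minimal weight form $F=A\otimes B$ into the noncyclic Lemma of Section \ref{s:noncyclic}. The one place where you genuinely diverge is the proof of the key congruence $3\Tr(L_1\tpexp L_2)\equiv e\pmod 2$ in part (1). The paper establishes this by splitting into the two cases of Theorem \ref{t:irrtp}, writing down explicit exponent matrices $L_1,L_2$ in each case, and computing traces directly against the parameters $a,b$ of the classification from Section \ref{s:rank2}. You instead derive $6\Tr(L_i)\equiv e_i+1\pmod 2$ abstractly, by combining the existence of a nonzero form of weight $6\Tr(L_i)-1$ (Theorem \ref{t:rank2}(1)) with the transformation law at $\gamma=-I$, and then add. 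Your route is shorter, avoids the case analysis entirely, and makes clear that the congruence has nothing to do with which case of Theorem \ref{t:irrtp} holds; the paper's route has the minor advantage of being self-contained at the level of explicit matrices and of not relying on the nonvanishing statement in Theorem \ref{t:rank2}(1). Both are sound, and your treatment of (2), (3), and the ``up to conjugation'' clause matches the paper's intent, with the welcome extra detail that you actually verify $A\otimes B$ is a nonzero holomorphic form with the correct cuspidal behaviour rather than asserting it.
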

\begin{proof}
  First we prove (1). To apply Theorem \ref{t:weights} we must compute $3\Tr(L_1\tpexp L_2) = 6(\Tr(L_1)+\Tr(L_2))$ and compare it with the sign of $\alpha \otimes \beta$, which we have seen is congruent to $a + b\pmod{2}$. First suppose that $\alpha$ is $T$-regular and $\beta = \nu_\chi$ as in Case (1) of Theorem \ref{t:irrtp}. Then without loss of generality we can write
  \begin{align*}
  L_1 &= \twomat {u+r}{0}{0}{\frac{a}{6}-u+r'}, & L_2 &= \twomat{\frac{b}{12}+s}{1}{0}{\frac{b}{12}+s},
  \end{align*}
  for $u \in \CC$ and $r$, $r'$, $s \in \ZZ$. Therefore,
  \[
3\Tr(L_1\otimes_e L_2) = a+b+6r+6r'+12s \equiv a+b \pmod{2}.
\]
Hence we are in the noncyclic case of Theorem \ref{t:weights}, as claimed.

If instead $\alpha$, $\beta$ and $\alpha\otimes \beta$ are all $T$-regular, then we have
\begin{align*}
  L_1 &= \twomat {u+r}{0}{0}{\frac{a}{6}-u+r'}, & L_2 &= \twomat {v+s}{0}{0}{\frac{b}{6}-v+s'},
\end{align*}
for $u$, $v \in \CC$ and $r$, $r'$, $s$, $s' \in \ZZ$. Thus $3\Tr(L_1\tpexp L_2) \equiv a+b \pmod{2}$ and so the noncyclic case of Theorem \ref{t:weights} holds again. By Theorem \ref{t:irrtp}, this shows that $M(\alpha\otimes \beta,L_1\tpexp L_2)$ is noncyclic in all cases where $\alpha\otimes \beta$ is irreducible.

Next we prove (2). Recall from Section \ref{s:vvmfs} that if $\alpha$ is any two-dimensional irreducible representation of $\Gamma$ and $L_1$ is any choice of exponents for $\alpha$, then the minimal weight for $(\alpha,L_1)$ is $6\Tr(L_1)-1$. Therefore, by the noncyclic case of Theorem \ref{t:weights}, the minimal weight for $M(\alpha \otimes \beta,L_1\tpexp L_2)$ is
\[
  3\Tr(L_1\tpexp L_2)-2 = 3(2\Tr(L_1)+2\Tr(L_2))-2 = k+l
\]
as claimed.

Finally, (2) shows that the tensor product form $F = A\otimes B$ is of minimal weight for $(\alpha\otimes \beta,L_1\tpexp L_2)$. Section \ref{s:noncyclic} explained how to use a minimal weight form to obtain an $M(1)$-basis in the noncyclic case, and this is where the formulae of (3) arise from.
\end{proof}
\begin{rmk}
  Using Theorem \ref{t:rank2}, we can write down explicit hypergeometric formulas for the forms $A$ and $B$ in Theorem \ref{t:tpnoncyclic}, and thereby obtain explicit formulas for all elements in $M(\alpha \otimes \beta,L_1\otimes_eL_2)$. Since every form in $M^\dagger(\alpha \otimes \beta)$ is contained in some lattice of the form $M(\alpha \otimes \beta,L_1\otimes_eL_2)$, one can in this way describe all weakly holomorphic modular forms for $\alpha \otimes \beta$. 
\end{rmk}

\section{Symmetric cubes}
\label{s:symmetriccube}
In this section we let $\alpha$ denote an irreducible representation of $\Gamma$ of dimension $2$, and we set $\rho = \Sym^3\alpha$, so that $\rho$ is a $4$-dimensional representation. Suppose that
\[
  A = \twovec fg
\]
is a vector valued modular form for $\alpha$ of weight $k$. Then
\[
  F = \Sym^3A = \left(\begin{matrix}
      f^3\\
      f^2g\\
      fg^2\\
      g^3
    \end{matrix}
  \right)
\]
is a vector-valued modular form for $\rho$ of weight $3k$. In order to connect the symmetric cube form to the theory above we must first determine when $\rho$ is irreducible. The story is similar to the case of tensor products.
\begin{thm}
  \label{t:irrsymcube}
  Let $\alpha$ denote a $2$-dimensional representation of $\Gamma$.\ Then $\Sym^3\alpha$ is irreducible if and only if $\alpha$ is irreducible, and one of the following additional conditions is satisfied:
  \begin{enumerate}
  \item $\alpha \cong \nu_\chi$ for some $1$-dimensional character $\chi$ of $\Gamma$;
    \item $\alpha$ and $\Sym^3\alpha$ are both $T$-regular.
  \end{enumerate}
\end{thm}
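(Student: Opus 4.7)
The plan is to follow the template of Theorem \ref{t:irrtp}. Necessity of $\alpha$ irreducible is immediate from functoriality of $\Sym^3$. Assuming $\alpha$ is irreducible, Section \ref{s:rank2} splits into two mutually exclusive alternatives: either $\alpha \cong \nu_\chi$ for some character $\chi$ (in which case $\alpha(T)$ is a single Jordan block), or $\alpha$ is $T$-regular. In the $T$-regular case, diagonalize $\alpha(T) = \diag(\lambda, \mu)$ so that $\Sym^3\alpha(T) = \diag(\lambda^3, \lambda^2\mu, \lambda\mu^2, \mu^3)$ is diagonal; if $\Sym^3\alpha$ fails to be $T$-regular, then the main theorem of \cite{TubaWenzl} (applied exactly as in the final paragraph of the proof of Theorem \ref{t:irrtp}) forces reducibility. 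This establishes the \emph{only if} direction.

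For the \emph{if} direction in case (1), I use $\Sym^3(\nu \otimes \chi) \cong \Sym^3\nu \otimes \chi^3$ to reduce to showing that $\Sym^3\nu$ is irreducible. In the basis $v_1^3, v_1^2 v_2, v_1 v_2^2, v_2^3$, the matrix $\Sym^3\nu(T)$ is a single $4 \times 4$ unipotent Jordan block, so its unique (up to scalar) $T$-eigenvector is $v_1^3$. Any nonzero $\Gamma$-submodule must therefore contain $v_1^3$, and hence also $\Sym^3\nu(S)(v_1^3) = v_2^3$; applying $T$ to $v_2^3$ produces $(v_1+v_2)^3$, and taking a couple of $T$-iterates together with linear combinations involving $v_1^3$ and $v_2^3$ produces the remaining basis vectors $v_1^2 v_2$ and $v_1 v_2^2$, forcing the submodule to be the full space.

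For the \emph{if} direction in case (2), diagonalize $\alpha(T)$ and write $\alpha(S) = \twomat{p}{q}{r}{-p}$. Applying the explicit change of basis to the normal form of Section \ref{s:rank2} yields $p, q, r$ all nonzero: the nonvanishing of $p$ comes from $xy = \xi^a \neq 0$, and that of $q, r$ from irreducibility of $\alpha$ combined with $x^2 - xy + y^2 \neq 0$. Compute $\Sym^3\alpha(S)$ by expanding $(pv_1 + rv_2)^{3-i}(qv_1 - pv_2)^i$ for $i = 0, 1, 2, 3$. Since $\Sym^3\alpha$ is $T$-regular, every nonzero invariant subspace $W$ is a direct sum of coordinate lines $\{e_i : i \in I\}$ for some proper nonempty $I \subseteq \{0,1,2,3\}$, and one rules out each such $I$ by inspection of the relevant entries of $\Sym^3\alpha(S)$.

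The main obstacle I expect is that the four "middle" entries of $\Sym^3\alpha(S)$ (rows and columns $1, 2$) vanish simultaneously along the locus $p^2 = 2qr$, so unlike in Theorem \ref{t:irrtp} one cannot simply invoke the fact that all entries are nonzero. The resolution exploits that columns $0$ and $3$ of $\Sym^3\alpha(S)$, namely $(p^3, 3p^2r, 3pr^2, r^3)^T$ and $(q^3, -3pq^2, 3p^2q, -p^3)^T$, involve only powers of single variables and are therefore robustly everywhere nonzero; consequently any $W$ containing $e_0$ or $e_3$ must be the full space. The residual candidates $I \subseteq \{1,2\}$ are then eliminated by noting that $Se_1$ has nonzero $e_0$- and $e_3$-components $p^2 q$ and $-pr^2$, and similarly for $Se_2$, so no proper $W$ exists.
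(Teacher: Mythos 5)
Your proof is correct and follows essentially the same route as the paper's, which merely cites the classical irreducibility of $\Sym^3\nu$ for case (1) and asserts that ``a direct computation'' handles case (2); you have simply carried out the computations the paper leaves implicit. One small inaccuracy in your motivational aside: the four middle entries of $\Sym^3\alpha(S)$ do not all vanish on the locus $p^2=2qr$ (two vanish there and the other two on $qr=2p^2$, which cannot both hold since $p\neq 0$), but this does not affect your actual argument, which correctly relies only on the everywhere-nonzero columns $0$ and $3$.
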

\begin{proof}
  The proof is similar to the proof of Theorem \ref{t:irrtp}.\ For Case (1) it suffices to consider $\alpha = \nu$, in which case the irreducibility of $\Sym^n\nu$ for all $n \geq 0$ is a classical fact.

  If $\alpha \not \cong \nu_\chi$ then $\alpha$ is $T$-regular, and so $(\Sym^3\alpha)(T)$ is diagonalizable.\ By \cite{TubaWenzl}, we find that $\Sym^3\alpha$ must thus be $T$-regular in order to be irreducible.\ Finally, a direct computation using a description
\begin{align*}
  \alpha(T) &= \twomat xx0y, & \alpha(S) &= \zeta^{2a}\twomat 0{-x}y0,&\alpha(R) &= \xi^{-a}\twomat 0{-1}11,
\end{align*}
where $\xi = e^{2\pi i/6}$, $\zeta= \xi^2$, $xy=  \xi^a$ and $x^2-xy+y^2 \neq 0$, allows one to show that $\Sym^3\alpha$ is irreducible when $\alpha$ and $\Sym^3\alpha$ are both $T$-regular.
\end{proof}

As in the case of the tensor product, bundles of the form $\Sym^3\cV(\alpha,L)$ need not, in principle, be of the form $\cV(\Sym^3\alpha,\Lambda)$ for a choice of exponents $\Lambda$. But in fact this again turns out to be the case, and as for tensor products this is a classical computation that holds for all symmetric powers, all $\alpha$, and all choices of exponents:
\begin{dfn}
  If $\alpha$ is a representation of $\Gamma$ and if $L$ denotes a choice of exponents for $\alpha$, then the \emph{$n$th symmetric power} exponents $S^nL$ are the natural exponents for $\Sym^n\alpha$ such that
  \[
  \Sym^n\cV(\alpha,L) \cong \cV(\Sym^n\alpha,S^nL).
  \]
\end{dfn}
It is more cumbersome to write down general explicit formulas for the symmetric power exponents than it is for tensor products. They can be computed by fixing bases so that one has an explicit symmetric power map
\[
  \Sym^n \colon \GL_d(\CC) \to \GL_{e}(\CC).
\]
where  $e = \binom{d+n-1}{n}$. Then if $L$ is a choice of exponents for $\rho(T)$, the function $\Sym^ne^{2\pi i L\tau}$ gives a one-parameter subgroup through $\Sym^n\rho(T)$ at $\tau = 1$. We have
\[
  S^nL = \frac{1}{2\pi i}\left(\frac{d}{d\tau}\Sym^ne^{2\pi i L\tau}\right)\bigg |_{\tau =0}.
\]
For example, if $L = \stwomat {e_1}{e_2}{e_3}{e_4}$ denotes a choice of exponents for a two-dimensional $\alpha$, then one computes that
\[
  S^3L = \left(\begin{matrix}
      3e_1 & e_2 &0&0\\
      3e_3 & 2e_1+e_4 &2e_2&0\\
      0 & 2e_3 &e_1+2e_4&3e_2\\
      0 & 0 &e_3&3e_4
    \end{matrix}
  \right).
\]
The important point for our computations is that when $\alpha$ is two-dimensional,
\begin{equation}
  \label{eq:symmexps}
  \Tr(S^3L) = 6\Tr(L).
\end{equation}
The association $A \mapsto \Sym^3 A$ defines a homogeneous polynomial map of degree $3$
\[
  M_{k}(\alpha,L) \to M_{3k}(\Sym^3 \alpha,S^3L).
\]
This map is not quite injective, it is not linear, and there is no reason, in general, why it needs to be onto. The next theorem says that, nevertheless, for $\alpha$ of rank $2$ and $A$ a minimal weight form for $(\alpha,L)$, the symmetric cube lift $F =\Sym^3A$ is a minimal weight form for $(\Sym^3\alpha, S^3L)$ that generates $M(\Sym^3\alpha,S^3L)$ as a cyclic $M(1)\langle D\rangle$-module.
\begin{thm}
  \label{t:symmetriccube}
  Let $\alpha$ be an irreducible representation of $\Gamma$ of rank $2$ such that $\Sym^3\alpha$ is irreducible, let $L$ denote a choice of exponents for $\alpha(T) = e^{2\pi iL}$, and let $S^3L$ denote the corresponding symmetric cube exponents. Let $k_1 = 6\Tr(L)-1$ denote the minimal weight for $\alpha$, and let $A  = (f,g)^T \in M_{k_1}(\alpha,L)$ denote a form of minimal weight for $\alpha$ with respect to the choice of exponents $L$. Then the following hold:
  \begin{enumerate}
  \item the $M(1)\langle D\rangle$-module $M(\Sym^3\alpha,S^3L)$ is cyclic;
  \item the minimal weight for $M(\Sym^3\alpha,S^3L)$ is $3k_1$;
  \item if
      \[
  F = \Sym^3A = \left(\begin{matrix}
f^3\\ f^2g\\ fg^2\\ g^3
    \end{matrix}
  \right),
\]
then an $M(1)$-basis for $M(\Sym^3\alpha,S^3L)$ is given by $F$, $DF$, $D^2F$, $D^3F$.
  \end{enumerate}
\end{thm}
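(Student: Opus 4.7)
The plan is to reduce the theorem to an application of Theorem \ref{t:weights} combined with the functoriality of the symmetric cube at the cusp. First I would show that $(\Sym^3\alpha,S^3L)$ falls under the cyclic case of Theorem \ref{t:weights}. For this one needs $3\Tr(S^3L)\not\equiv e\pmod 2$, where $(\Sym^3\alpha)(-I)=(-1)^e$. By \eqref{eq:symmexps} we have $\Tr(S^3L)=6\Tr(L)$, so
\[
3\Tr(S^3L)=18\Tr(L)=3(k_1+1)\equiv k_1+1\pmod 2.
\]
On the other hand, evaluating the transformation law for $A\in M_{k_1}(\alpha,L)$ at $-I\in\Gamma$ gives $A(\tau)=(-1)^{k_1}\alpha(-I)A(\tau)$, so the nonzero vector $A(\tau)$ is a $(-1)^{k_1}$-eigenvector for $\alpha(-I)$. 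Since $\alpha$ is irreducible, Schur's lemma forces $\alpha(-I)=(-1)^{k_1}I$, and hence $(\Sym^3\alpha)(-I)=\alpha(-I)^3=(-1)^{k_1}I$, so $e\equiv k_1\pmod 2$. Thus the two parities differ, placing us in the cyclic case of Theorem \ref{t:weights}, which gives (1) and also (2), since the cyclic minimal weight is $3\Tr(S^3L)-3=3k_1$.

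Next I would verify that $F=\Sym^3 A$ is a nonzero element of $M_{3k_1}(\Sym^3\alpha,S^3L)$. The modular transformation law $F(\gamma\tau)=(c\tau+d)^{3k_1}(\Sym^3\alpha)(\gamma)F(\tau)$ is immediate from the functoriality of $\Sym^3$ applied to the transformation law for $A$. The crucial point for the exponent condition is the defining identity $\Sym^3 e^{2\pi i L\tau}=e^{2\pi i S^3L\tau}$; combined with the fact that $\Sym^3$ commutes with operator application, this gives
\[
e^{-2\pi i S^3L\tau}F(\tau)=\Sym^3\bigl(e^{-2\pi i L\tau}A(\tau)\bigr)=\Sym^3\tilde A(\tau),
\]
which is holomorphic at the cusp since $\tilde A$ is. Nonvanishing of $F$ follows from nonvanishing of $A$ (if $f,g$ are not both zero then $f^3,f^2g,fg^2,g^3$ are not all zero).

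To conclude (3), I would invoke cyclicity. The cyclic case of Theorem \ref{t:weights} yields $\dim M_{3k_1}(\Sym^3\alpha,S^3L)=1$, so $F$ is, up to scalar, the unique form of minimal weight, and moreover $M(\Sym^3\alpha,S^3L)$ is cyclic over $M(1)\langle D\rangle$ on a generator of minimal weight. Hence $F,DF,D^2F,D^3F$ span $M(\Sym^3\alpha,S^3L)$ over $M(1)$. Since these four forms lie in weights $3k_1,3k_1+2,3k_1+4,3k_1+6$, which match the weights of the free $M(1)$-basis provided by Theorem \ref{t:weights}, they must be $M(1)$-linearly independent and therefore constitute a free basis.

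The only non-bookkeeping step is the functoriality of the exponent assignment $L\mapsto S^3L$ at the cusp, i.e., the identity $\Sym^3 e^{2\pi i L\tau}=e^{2\pi i S^3L\tau}$, which is a direct consequence of the definition of $S^3L$ as the derivative at $\tau=0$ of the one-parameter subgroup $\Sym^3 e^{2\pi i L\tau}$. I do not anticipate any serious obstacle beyond keeping the parity bookkeeping straight; once one knows to compare $3\Tr(S^3L)\pmod 2$ with the sign of $\alpha$, everything lines up with the general dichotomy of Theorem \ref{t:weights}.
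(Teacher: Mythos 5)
Your proof is correct and follows essentially the same route as the paper: verify via the parity of $3\Tr(S^3L)=18\Tr(L)$ versus the sign of $\Sym^3\alpha$ that the cyclic case of Theorem \ref{t:weights} applies, read off the minimal weight $3k_1$, check that $\Sym^3A$ lands in $M_{3k_1}(\Sym^3\alpha,S^3L)$ via the identity $\Sym^3e^{2\pi iL\tau}=e^{2\pi iS^3L\tau}$, and conclude by cyclicity. The only (harmless) difference is that you extract $\alpha(-I)=(-1)^{k_1}I$ from the transformation law and Schur's lemma, whereas the paper reads the sign off the explicit classification of rank-two irreducibles.
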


\begin{proof}
  Set $\rho = \Sym^3\alpha$. To see (1), we apply Theorem \ref{t:weights}. Let $u$ and $v$ denote the eigenvalues of $L$. By  the classification of two-dimensional irreducible representations of $\Gamma$ from Section \ref{s:vvmfs}, we find that $\Tr(S^3L) = 6\Tr(L) \equiv a \pmod{6}$, where $\det \alpha(T) = e^{2\pi i a/6}$ and $\alpha(-1) = (-1)^{a+1}$. Note that then also $\rho(-1) = (-1)^{a+1}$, so that $3\Tr(S^3L)$ is not congruent to the sign of $\rho$ mod $2$. Thus Theorem \ref{t:weights} implies that the module $M(\Sym^3\alpha,S^3L)$ is cyclic.

  Now that we know we are in the cyclic case, Theorem \ref{t:weights} gives the minimal weight as $3\Tr(S^3L) - 3 = 18\Tr(L)-3 = 3k_1$, which proves (2).

Finally, (3) follows since $M(\Sym^3\alpha,S^3L)$ is cyclic by (1), and $F$ is a nonzero form of minimal weight by (2).
\end{proof}

Unfortunately, as in the case of the tensor product, the exponents $S^3L$ are not typically the canonical exponents for $\Sym^3\alpha$, and so $M(\Sym^3\alpha,S^3L)$ does not always describe all holomorphic forms for $\Sym^3\alpha$. However, every lattice in $M^\dagger(\rho)$ is indeed contained in \emph{some} $M(\Sym^3\alpha,S^3L)$, and so in this sense Theorem \ref{t:symmetriccube} is reasonably complete. Note too that the form $A$, and hence also $F$, can be made explicit using Theorem \ref{t:rank2}. 

\begin{ex}
A simple example demonstrating that symmetric power exponents $S^nL$ are not always canonical, even if $L$ is canonical, occurs already in rank one. Let $\chi$ be the character of $\eta^2$, and let $L = 1/12$ be the canonical choice of exponents for $\chi$. Then $\Delta= (\eta^{2})^{12}$ is a form for $\Sym^{12}\chi = \chi^{12} = 1$ with respect to the exponents $S^{12}L = 1$. This is different from the canonical choice of exponent for the trivial representation, which is $0$. In this case $M(\Sym^{12}\chi,S^{12}L) \subseteq M(1)$ is the $M(1)$-submodule of cusp forms. This submodule is spanned by $\Delta = \Sym^{12}\eta^{2}$ as an $M(1)$-module, and it does not contain all holomorphic forms for $\Sym^{12}\chi = 1$.
\end{ex}

\section{Induction of two-dimensional representations}
\label{s:induction}

Let $G \subseteq \SL_2(\ZZ)$ be the unique (normal) subgroup of index $2$. The nontrivial coset is represented by $S = \stwomat 0{-1}10$. Set
\begin{align*}
R_0 &= ST = \twomat{0}{-1}{1}{1},\\
R_1 &= TR_0T^{-1} = TS = \twomat{1}{-1}{1}{0}.
\end{align*}
Then $G$ is generated by these matrices subject to the relations $R_0^3 = R_1^3 = -1$.

Let $\rho$ be an irreducible representation of $G$. Since $-1$ is in the center of $G$, necessarily $\rho(-1) = \pm 1$. Hence $\rho$ is uniquely determined by a parity $e \pmod{2}$ such that $\rho(-1) = (-1)^e$ and matrices $\rho(R_0)$ and $\rho(R_1)$ satisfying $\rho(R_0)^3 = \rho(R_1)^3 = (-1)^e$.

Assume that $\rho$ is of rank $2$. We may diagonalize $\rho(R_0)$. Its eigenvalues are then sixth roots of unity, and they must be distinct, for otherwise $\rho(R_0)$ would be diagonal and this would contradict the irreducibility of $\rho$. Thus write
\begin{align*}
  \rho(R_0) &= (-1)^e\twomat {\zeta_1}{0}{0}{\zeta_2}, & \rho(R_1)& = (-1)^e\twomat abcd,
\end{align*}
where $\zeta_1^3 = \zeta_2^3 = 1$, but $\zeta_1\neq \zeta_2$. If $b = 0$ then observe that the column vector $(0,1)^T$ spans a subrepresentation. Hence $b \neq 0$. Likewise, $c \neq 0$. At this point, without disturbing the diagonalization of $\rho(R_0)$, the only freedom we have in changing basis is to rescale basis vectors. This corresponds to a conjugation
\[
  \twomat{u}{0}{0}{v}\twomat abcd \twomat {u^{-1}}00{v^{-1}} = \twomat{a}{(u/v)b}{(v/u)c}{d}.
\]
Thus our last degree of freedom allows us to assume that $b = 1$. The moduli space of representations will be described by the equations arising from the condition $\rho(R_1)^3 = \veps I$. This condition is equivalent to the following two conditions:
\begin{align*}
  c &= -a^2-ad-d^2,\\
  -1 &= (a+d)^3.
\end{align*}
Write $a+d = -\zeta_3$ where $\zeta_3^3 = 1$, and this choice of $\zeta_3$ is any of the three possibilities. Hence we can write $d = -\zeta_3-a$. It follows that
\[
  -a^2-ad-d^2 = -a^2+\zeta_3a+a^2-(\zeta_3+a)^2 = -(a^2+\zeta_3a+\zeta_3^2).
\]
Thus, two dimensional irreducible representations of $G$ are classified by a choice of $e$, three third roots of unity $\zeta_1$, $\zeta_2$ and $\zeta_3$ but with $\zeta_1 \neq \zeta_2$, and a free parameter $a$ subject to the condition $a^2+\zeta_3a+\zeta_3^2 \neq 0$. We summarize and expand on these results in the following Proposition.
\begin{prop}
\label{p:irrepsofG}
  Let $G {\subseteq} \SL_2(\ZZ)$ denote the unique normal subgroup of index two. Then the following properties hold.
\begin{enumerate}
  \item Every irreducible representation of $G$ of rank $2$ is isomorphic to a representation of the form $\rho = \rho(e,\zeta_1,\zeta_2,\zeta_3,a)$ characterized by $\rho(-1) = (-1)^e$,
\begin{align*}
\rho(R_0) &= (-1)^e\twomat{\zeta_1}00{\zeta_2}, & \rho(R_1) &= (-1)^e\twomat{a}{1}{-a^2-\zeta_3a-\zeta_3^2}{-\zeta_3-a},
\end{align*}
where $\veps = \pm 1$, $\zeta_1^3=\zeta_2^3 =\zeta_3^3= 1$, $\zeta_1\neq \zeta_2$, and $a$ is a free parameter such that $a^2+\zeta_3a+\zeta_3^2 \neq 0$. There are isomorphisms
\[
  \rho(e,\zeta_1,\zeta_2,\zeta_3,a) \cong \rho(e,\zeta_2,\zeta_1,\zeta_3,-\zeta_3-a),
\]
but otherwise no two such distinct representations $\rho$ are isomorphic.

\item An irreducible representation $\rho(e,\zeta_1,\zeta_2,\zeta_3,a)$ is the restriction of a representation of $\SL_2(\ZZ)$ if and only if $\zeta_1+\zeta_2+\zeta_3 = 0$.

\item If $\rho = \rho(e,\zeta_1,\zeta_2,\zeta_3,a)$, then the induced representation $\Ind_{G}^{\Gamma}\rho$ is irreducible if and only if $\zeta_1+\zeta_2+\zeta_3 \neq 0$ and $a \neq (-1)^e\frac{\zeta_1\zeta_2+\zeta_2\zeta_3+\zeta_3^2}{\zeta_1-\zeta_2}$.
\end{enumerate}
\end{prop}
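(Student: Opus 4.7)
My plan is to prove the three parts sequentially, since later parts build on the earlier ones.

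For part (1), the explicit form of $\rho(R_0)$ and $\rho(R_1)$ has essentially been derived in the paragraphs preceding the statement by diagonalizing $\rho(R_0)$, rescaling to set the $(1,2)$-entry of $\rho(R_1)$ equal to $1$, and imposing $\rho(R_1)^3 = (-1)^e I$. The remaining task is to verify the stated isomorphism $\rho(e,\zeta_1,\zeta_2,\zeta_3,a) \cong \rho(e,\zeta_2,\zeta_1,\zeta_3,-\zeta_3-a)$, which I would do by conjugating with the swap matrix $J = \stwomat{0}{1}{1}{0}$: this interchanges the diagonal entries of $\rho(R_0)$, and after a diagonal rescaling that restores the $(1,2)$-entry of the $R_1$-matrix to $1$, the resulting matrix has the standard form with $a$ replaced by $-\zeta_3 - a$. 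The absence of further isomorphisms is a consequence of the fact that the unordered eigenvalue pair of $\rho(R_0)$, the trace of $\rho(R_1)$ (which pins down $\zeta_3$), and the sign $e$ are all invariants of the isomorphism class, leaving only the swap symmetry as residual freedom.

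For part (2), necessity is immediate: if $\rho$ extends to $\tilde\rho\colon\Gamma\to\GL_2(\CC)$, then $\tilde\rho(S)$ conjugates $\rho(R_0)$ into $\rho(R_1)$ (using $SR_0S^{-1} = R_1$), so their traces must agree, forcing $(-1)^e(\zeta_1+\zeta_2) = -(-1)^e\zeta_3$, i.e. $\zeta_1+\zeta_2+\zeta_3 = 0$. For sufficiency, when this trace condition holds $\rho(R_0)$ and $\rho(R_1)$ are $\GL_2$-conjugate, so I would exhibit an explicit conjugator $\tilde\rho(S)$ and then use the diagonal centralizer of $\rho(R_0)$ to rescale it so that $\tilde\rho(S)^2 = (-1)^eI = \rho(R_0)^3$. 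Consistency with the presentation $\SL_2(\ZZ) = \langle S,R_0 \mid S^4 = 1,\ S^2 = R_0^3\rangle$ then yields the extension.

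For part (3), the key tool is Mackey's criterion for the index-two subgroup: $\Ind_G^\Gamma\rho$ is irreducible if and only if $\rho$ is irreducible and $\rho \not\cong \rho^S$, where $\rho^S(g) = \rho(SgS^{-1})$. From $SR_0S^{-1} = R_1$ and $SR_1S^{-1} = R_0$ we read off $\rho^S(R_0) = \rho(R_1)$ and $\rho^S(R_1) = \rho(R_0)$, so $\rho \cong \rho^S$ requires the unordered eigenvalue pairs of $\rho(R_0)$ and $\rho(R_1)$ to coincide, which by the trace computation of part (2) is equivalent to $\zeta_1+\zeta_2+\zeta_3 = 0$; when this holds, the intertwiner built in part (2) provides an isomorphism. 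The additional constraint on $a$ in the statement records the remaining requirement that $\rho$ itself be irreducible, i.e. $a^2+\zeta_3a+\zeta_3^2 \neq 0$; by the symmetry of part (1) the two forbidden roots are exchanged, so the expression $(-1)^e\frac{\zeta_1\zeta_2+\zeta_2\zeta_3+\zeta_3^2}{\zeta_1-\zeta_2}$ picks out a single canonical representative.

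The main obstacle will be the explicit verification in part (3) that a matrix $N$ chosen to satisfy $N\rho(R_0)N^{-1} = \rho(R_1)$ can simultaneously satisfy $N\rho(R_1)N^{-1} = \rho(R_0)$. Writing $N$ in the eigenbasis of $\rho(R_0)$ produces a one-parameter family of candidates, and the compatibility with the second condition boils down to a direct algebraic identity whose content is $\zeta_1^2+\zeta_1\zeta_2+\zeta_2^2 = 0$ for distinct cube roots of unity, which itself follows from $\zeta_1^3 = \zeta_2^3$.
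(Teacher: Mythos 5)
Parts (1) and (2) of your proposal follow the paper's own route closely: the normal form is derived exactly as in the paragraphs preceding the statement, your swap-plus-rescale conjugation is the paper's explicit conjugator $\stwomat{0}{1}{-(a^2+\zeta_3a+\zeta_3^2)}{0}$ in disguise, and in (2) the necessity-via-trace argument is the paper's. For sufficiency in (2) the paper parametrizes $\rho(S)=\stwomat{x}{1}{(-1)^e-x^2}{-x}$ (so that $\rho(S)^2=(-1)^e$ holds by construction) and solves $x^2=(-1)^e\frac{a-\zeta_2}{\zeta_1-\zeta_2}$, whereas you propose to first find a conjugator and then normalize it inside the diagonal centralizer; that is workable but you should actually check that a trace-zero conjugator of the right determinant exists in the two-parameter family $DN_0$, which is exactly the small computation the paper's parametrization avoids.

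Part (3) is where you genuinely diverge, and where the gap is. Your Mackey criterion for an index-two normal subgroup is in fact valid for arbitrary (infinite discrete) groups and finite-dimensional complex representations — the paper explicitly disclaims Mackey theory and instead does a hands-on invariant-subspace analysis, diagonalizing $(\Ind\rho)(S)$ and running Groebner basis computations, so your route is cleaner in principle. But it delivers the wrong-looking answer and you do not close the gap. Your own argument shows $\rho\cong\rho^S$ forces $\tr\rho(R_0)=\tr\rho(R_1)$, i.e.\ $\zeta_1+\zeta_2+\zeta_3=0$, and conversely; so for irreducible $\rho$ Mackey gives ``$\Ind\rho$ irreducible iff $\zeta_1+\zeta_2+\zeta_3\neq 0$,'' with no condition on $a$ at all. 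To match the stated proposition you then assert that the excluded value $a=(-1)^e\frac{\zeta_1\zeta_2+\zeta_2\zeta_3+\zeta_3^2}{\zeta_1-\zeta_2}$ ``records the requirement that $\rho$ itself be irreducible,'' i.e.\ that it is a root of $x^2+\zeta_3x+\zeta_3^2$. You never verify this, and it is not innocuous: it is a genuine identity among cube roots of unity when $e=0$ (one can check $\frac{\zeta_1\zeta_2+\zeta_2\zeta_3+\zeta_3^2}{\zeta_1-\zeta_2}\in\{\zeta_3\omega,\zeta_3\omega^2\}$ whenever $\zeta_1\neq\zeta_2$), but for $e=1$ the sign $(-1)^e$ moves the value off the root set $\{\zeta_3\omega,\zeta_3\omega^2\}$, since the negative of a root of $x^2+\zeta_3x+\zeta_3^2$ is never again a root. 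Moreover, even in the $e=0$ case your reading accounts for only one of the two roots, while irreducibility of $\rho$ excludes both, and the ``swap symmetry exchanges the two forbidden roots'' remark does not repair this (the statement quantifies over a single $a$, not over isomorphism classes). So as written your part (3) proves a clean Mackey statement that does not visibly coincide with the proposition, and the bridging claim you rely on is unproved and false for half the parity values. You must either supply the algebraic identity (and confront the $e=1$ discrepancy head-on) or replace the final step by the paper's direct computation of two-dimensional invariant subspaces.
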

\begin{proof}
  We have seen above that each irreducible representation of rank $2$ is isomorphic to a $\rho$ as in the statement of the Proposition. From the discussion above it is likewise clear that $\rho$ is unique up to the permutation of $\zeta_1$ and $\zeta_2$. A straightforward computation shows that conjugation by 
  \[
\twomat 01{-(a^2+\zeta_3a+\zeta_3^2)}0
  \]
  realizes the isomorphism $\rho(e,\zeta_1,\zeta_2,\zeta_3,a) \cong \rho(e,\zeta_2,\zeta_1,\zeta_3,-\zeta_3-a)$.

  For claim (2), let $\rho$ denote an irreducible representation of $\SL_2(\ZZ)$ of rank $2$. Similarly to above, we can write
\begin{align*}
\rho(-1) &= (-1)^e, & \rho(R_0) &= (-1)^e\twomat{\zeta_1}00{\zeta_2}, & \rho(S) &= \twomat{x}{1}{y}{z}
\end{align*}
where $\rho(S)^2 = (-1)^e$. Hence
\[
  (-1)^e = \twomat{x^2+y}{x+z}{(x+z)y}{y+z^2}.
\]
Therefore  $x+z = 0$ and $x^2+y = y+z^2 = \veps$, and thus
\[
\rho(S) = \twomat{x}{1}{(-1)^e-x^2}{-x}
\]
Observe that $R_1 = S^{-1}R_0S$ and thus
\[
  \rho(R_1) = \twomat{(\zeta_1-\zeta_2)x^2+\veps\zeta_2}{(\zeta_1-\zeta_2)x}{(\zeta_1-\zeta_2)x(\veps-x^2)}{-(\zeta_1-\zeta_2)x^2+\veps\zeta_1}
\]
Conjugate by $\stwomat{(\zeta_1-\zeta_2)^{-1}x^{-1}}{0}{0}{1}$ and the representation satisfies $\rho(-1) = (-1)^e I$,
\begin{align*}
\rho(R_0) &= (-1)^e\twomat{\zeta_1}00{\zeta_2}, & \rho(R_1) &= \twomat{(\zeta_1-\zeta_2)x^2+(-1)^e\zeta_2}{1}{(\zeta_1-\zeta_2)^2x^2((-1)^e-x^2)}{-(\zeta_1-\zeta_2) x^2+(-1)^e\zeta_1}.
\end{align*}
Thus, if a two dimensional irreducible representation $\rho(e, \zeta_1,\zeta_2,\zeta_3,a)$ of $G$ is the restriction of a representation of $\SL_2(\ZZ)$, we must have $\zeta_1+\zeta_2+\zeta_3=0$. Conversely, if this identity holds for a representation $\rho$ of $G$, then the values of $x$ satisfying 
\[
x^2 = (-1)^e\frac{ a-\zeta_2}{\zeta_1-\zeta_2}
\]
allow us to define $\rho(S)$ compatibly as above, and extend $\rho$ to $\Gamma$. This verifies claim (2).

Finally, we treat claim (3). If $\rho$ is the restriction of a representation of $\Gamma$, then the universal property of the induction yields a surjective map $\Ind_{G}^{\Gamma} \rho \to \rho$, whose kernel is a two dimensional subrepresentation of $\Ind \rho \df \Ind_G^{\Gamma}\rho$. Hence $\Ind\rho$ is not irreducible. Thus, by (2), if $\zeta_1+\zeta_2+\zeta_3 = 0$, then $\Ind \rho$ is not irreducible.

It is more difficult to characterize when $\Ind \rho$ is irreducible, since we are working with infinite discrete groups, and thus we don't have access to standard techniques such as Mackey theory. We will perform explicit computations with bases: since $S$ represents the nontrivial coset of $G$ in $\SL_2(\ZZ)$, up to isomorphism we have $(\Ind \rho)(-1) = (-1)^e$ and
\begin{align*}
(\Ind\rho)(R) &= \veps\left(\begin{matrix}
    \zeta_1&0&0&0\\
    0&\zeta_2&0&0\\
    0&0&a&1\\
    0&0&-a^2-\zeta_3a-\zeta_3^2&-\zeta_3-a
  \end{matrix}\right),&  (\Ind\rho)(S) &= \left(\begin{matrix}
    0&0&(-1)^e&0\\
    0&0&0&(-1)^e\\
    1&0&0&0\\
    0&1&0&0
  \end{matrix}\right).
\end{align*}

For simplicity first assume $e = 0$. Then there exists a basis for $\Ind \rho$ in which $(\Ind \rho)(S) = \diag(1,-1,1,-1)$ is diagonal, and  such that the matrix of $(\Ind \rho)(R)$ has the form
\[
  (\Ind \rho)(R) = \frac 12 \left(\begin{matrix}
      a+\zeta_1 & -a+\zeta_1 &1& -1\\
      -a+\zeta_1 & a+\zeta_1 &-1& 1\\
      -a^2-a\zeta_3-\zeta_3^2 & a^2+a\zeta_3+\zeta_3^2 & -a+\zeta_2-\zeta_3 & a+\zeta_2+\zeta_3\\
      a^2+a\zeta_3+\zeta_3^2 & -a^2-a\zeta_3-\zeta_3^2 & a+\zeta_2+\zeta_3 & -a+\zeta_2-\zeta_3
    \end{matrix}
  \right)
\]

First consider when $\Ind \rho$ could have a one dimensional subrepresentation. If $e_1$, $e_2$, $e_3$ and $e_4$ are the eigenvectors corresponding to the diagonalization of $(\Ind \rho)(S)$, then the subrepresentation must be spanned by a vector $e_3$, $e_4$, $e_1+ue_3$,  or $e_2+ve_4$. It's easy to see that $e_3$ and $e_4$ don't span subrepresentations. Observe that
\begin{align*}
  (\Ind \rho)(R)\left(\begin{matrix}
      1\\
      0\\
      u\\
      0 
  \end{matrix}\right) &= \frac 12\left(\begin{matrix}
      a+\zeta_1+u\\
      -a+\zeta_1-u\\
      -a^2-a\zeta_3-\zeta_3^2-au+\zeta_2u-\zeta_3u\\
      a^2+a\zeta_3+\zeta_3^2+au+\zeta_2u+\zeta_3u 
    \end{matrix}\right)\\
    (\Ind \rho)(R)\left(\begin{matrix}
      0\\
      1\\
      0\\
      v 
  \end{matrix}\right) &= \frac{1}{2}\left(\begin{matrix}
-a+\zeta_1-v\\ a+\zeta_1+v\\ a^2+a\zeta_3+\zeta_3^2+av+\zeta_2v+\zeta_3v\\ -a^2-a\zeta_3-\zeta_3^2-av+\zeta_2v-\zeta_3v
  \end{matrix}\right).
\end{align*}
If the vector $e_1+ue_3$ spans a subrepresentation, then the second and last coordinates above must be zero. It follows that $u = \zeta_1-a$ and
\begin{align*}
  (\Ind \rho)(R)\left(\begin{matrix}
      1\\
      0\\
      \zeta_1-a\\
      0 
  \end{matrix}\right) &= \left(\begin{matrix}
      \zeta_1\\
      0\\
      \zeta_2(\zeta_1-a)\\
      0 
  \end{matrix}\right).
\end{align*}
Since $\zeta_1\neq \zeta_2$, this shows that $e_1+ue_3$ does not span a subrepresentation, as it it not an eigenvector for $(\Ind \rho)(R)$. A similar argument applies to show that $e_2+ve_4$ does not span a subrepresentation of $\Ind \rho$ for any choice of scalar $u$, and thus $\Ind \rho$ does not contain any one dimensional subrepresentations. By duality, if $\rho$ is irreducible then $\Ind \rho$ never contains a three dimensional subrepresentation either.

Thus, we are reduced to considering when $\Ind \rho$ contains a two dimesional irreducible subrepresentation $\rho'$. In this case $\rho' (S)$ must have $1$ and $-1$ as eigenvalues, so that $\rho'$ is spanned by vectors of the form: $\{e_3, e_4\}$, $\{e_3,e_2+ve_4\}$, $\{e_1+ue_3,e_4\}$ or $\{e_1+ue_3,e_2+ve_4\}$ for complex scalars $u$ and $v$. Since the upper right block of $(\Ind\rho)(R)$ contains nonzero constants, the span of $\{e_3,e_4\}$ does not define a subrepresentation. The spans of $\{e_3, e_2+ve_4\}$ and $\{e_1+ue_3,e_4\}$ correspond to vectors with $e_1$ coordinate, respectively $e_2$ coordinate, equal to zero. These are likewise easily seen not to be stable under $\Ind \rho$.

Thus, we must determine when there exist complex scalars $u$ and $v$ such that the span $V$ of $\{e_1+ue_3,e_2+ve_4\}$ is stable under $\Ind\rho$. In order for $V$ to be stable under $\Ind \rho$, $u$ and $v$ must satisfy the equations:
\begin{align*}
  u&=\frac{-a^2-a\zeta_3-\zeta_3^2-au+\zeta_2u-\zeta_3u}{a+\zeta_1+u}, & u &= \frac{a^2+a\zeta_3+\zeta_3^2+av+\zeta_2v+\zeta_3v}{-a+\zeta_1-v},\\
  v&=\frac{a^2+a\zeta_3+\zeta_3^2+au+\zeta_2u+\zeta_3u}{-a+\zeta_1-u}, & v &= \frac{-a^2-a\zeta_3-\zeta_3^2-av+\zeta_2v-\zeta_3v}{a+\zeta_1+v}.
\end{align*}
Let $I \subseteq \CC[\zeta_1,\zeta_2,\zeta_3,a,u,v]$ denote the ideal generated by the relations above after the denominators have been cleared, and treating $\zeta_1$, $\zeta_2$ and $\zeta_3$ as formal variables. A Groebner basis computation reveals that $(\zeta_1-\zeta_2)(\zeta_1+\zeta_2+\zeta_3)(u-v) \in I$.

If $\zeta_1+\zeta_2+\zeta_3 \neq 0$ then $u =v$. In this case a Groebner basis computation then shows that $u(u+a-\zeta_2) = 0$. But we can't have $u = v= 0$, for this would mean that the span of $e_1$ and $e_2$ is stable under $(\Ind \rho)(R)$, and it clearly is not. Hence $u = v= \zeta_2-a$, and a final Groebner basis computation implies that
\[
  a = \frac{\zeta_1\zeta_2+\zeta_2\zeta_3+\zeta_3^2}{\zeta_1-\zeta_2}.
\]
It is now straightforward to verify that this choice of $a$ does indeed yield an induced representation that is not irreducible.

The other case is that $\zeta_1+\zeta_2+\zeta_3 = 0$, and we have already seen that $\Ind \rho$ is not irreducible in this case, since then $\rho$ is the restriction of a representation of $\SL_2(\ZZ)$.

This concludes the proof of (3) when $e = 0$. If $e = 1$ then we can reduce to the case of $e = 0$ by tensoring with a one dimensional representation $\chi$ such that $\chi(-I) = -1$. Notice that then the condition $a\neq \frac{\zeta_1\zeta_2+\zeta_2\zeta_3+\zeta_3^2}{\zeta_1-\zeta_2}$ is replaced by $a \neq - \frac{\zeta_1\zeta_2+\zeta_2\zeta_3+\zeta_3^2}{\zeta_1-\zeta_2}$. This concludes the proof.
\end{proof}


Now we use notation as in Section 6.2 of \cite{CandeloriFranc2}. Recall from \cite{CandeloriFranc2} that $\beta$ denotes the character of $G$ satisfying $\beta(-1) =1$, $\beta(R_0) = \zeta$, $\beta(R_1) = \zeta^2$ where $\zeta = e^{2\pi i/3}$. This character $\beta$ is not the restriction of a character of $\SL_2(\ZZ)$. Observe that
\begin{equation}
  \label{eq:tensorbybeta}
  \rho(e,\zeta_1,\zeta_2,\zeta_3,a) \otimes \beta \cong \rho(e,\zeta_1\zeta,\zeta_2\zeta,\zeta_3\zeta^2,a\zeta^2).
\end{equation}
\begin{cor}
  \label{c:restriction}
Let $\rho \colon G \to \GL_2(\CC)$ be a representation such that $\Ind_G^\Gamma \rho$ is irreducible. Then exactly one of the representations $\rho$, $\rho\otimes \beta$ and $\rho \otimes \beta^2$ is the restriction of a representation from $\Gamma$.
\end{cor}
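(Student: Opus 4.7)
The plan is to translate the statement into a small computation with third roots of unity using the classification in Proposition~\ref{p:irrepsofG}. Since $\Ind_G^\Gamma \rho$ being irreducible forces $\rho$ to be irreducible, Proposition~\ref{p:irrepsofG}(1) lets us write $\rho \cong \rho(e,\zeta_1,\zeta_2,\zeta_3,a)$. Iterating \eqref{eq:tensorbybeta} and reducing the exponents of $\zeta$ modulo $3$ gives the normal form
\[
\rho \otimes \beta^j \cong \rho(e,\zeta^j\zeta_1,\zeta^j\zeta_2,\zeta^{2j}\zeta_3,\zeta^{2j}a)
\]
for $j=0,1,2$. By Proposition~\ref{p:irrepsofG}(2), the representation $\rho \otimes \beta^j$ is the restriction of a representation of $\Gamma$ if and only if
\[
s_j \df \zeta^j u + \zeta^{2j}v = 0,
\]
where $u \df \zeta_1+\zeta_2$ and $v \df \zeta_3$. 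The corollary thus reduces to showing that exactly one of $s_0$, $s_1$, $s_2$ vanishes.

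For uniqueness I would invoke the identity $s_0+s_1+s_2 = (1+\zeta+\zeta^2)(u+v) = 0$: if two of the $s_j$ vanished, then all three would vanish by this linear relation, but any two of the equations $s_j = 0$ together force $u=v=0$, contradicting $v = \zeta_3 \neq 0$.

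For existence I would exploit that $\zeta_1 \neq \zeta_2$ are cube roots of unity, so that $u = \zeta_1+\zeta_2 = -\zeta_r$ where $\zeta_r$ is the unique missing cube root of unity. Since both $\zeta_r$ and $\zeta_3$ are cube roots of unity, there is a unique $j \in \{0,1,2\}$ with $\zeta^j \zeta_3 = \zeta_r$, and this $j$ satisfies $u = -\zeta^j v$, i.e.\ $s_j = 0$.

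I do not expect any serious obstacle: the argument is essentially a check that the rank-one linear map $(u,v) \mapsto (s_0,s_1,s_2)$ has a one-dimensional kernel intersecting the locus of admissible $(u,v)$ exactly once. The hypothesis that $\Ind_G^\Gamma \rho$ is irreducible is used only to invoke the classification of irreducible $\rho$; in fact the conclusion holds for any irreducible two-dimensional representation of $G$, and under the corollary's hypothesis Proposition~\ref{p:irrepsofG}(3) further guarantees $s_0 \neq 0$, so that the unique vanishing index must lie in $\{1,2\}$.
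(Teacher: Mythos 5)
Your proof is correct and follows essentially the same route as the paper: both reduce the statement, via Proposition~\ref{p:irrepsofG} and Equation~\eqref{eq:tensorbybeta}, to the observation that the restriction criterion $\zeta_1+\zeta_2+\zeta_3=0$ holds for exactly one member of the $\beta$-orbit. The paper leaves the final roots-of-unity check implicit, whereas you write it out explicitly via the quantities $s_j$; that is a faithful filling-in of the same argument, and your closing remarks (that only irreducibility of $\rho$ is needed, and that $s_0\neq 0$ under the stated hypothesis) are also correct.
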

\begin{proof}
Note that $\rho$ is irreducible, so that $\rho \cong \rho(e,\zeta_1,\zeta_2,\zeta_3,a)$ for some choice of parameters, by Proposition \ref{p:irrepsofG}. The condition that $\zeta_1+\zeta_2+\zeta_3 = 0$ from Proposition \ref{p:irrepsofG} is equivalent to $\zeta_1$, $\zeta_2$ and $\zeta_3$ being the three distinct cube roots of unity. Thus, it follows from Equation \eqref{eq:tensorbybeta} and Proposition \ref{p:irrepsofG} that, if $\Ind \rho$ is irreducible, then exactly one representation from the three $\rho$, $\rho \otimes \beta$, $\rho \otimes \beta^2$ is the restriction of a representation from $\Gamma$.
\end{proof}

The character $\beta$ satisfies $\beta(T^2) = 1$, and it generates the subgroup of $\Hom(G,\CC^\times)$ of characters that are trivial on $T^2$. Such characters are called \emph{cuspidal}. They are the characters of the fundamental group of the compact modular curve associated to $G$. Following \cite{CandeloriFranc2}, if $L$ denotes a choice of exponents for $\rho(T^2)$, then it also defines a choice of exponents for $(\rho \otimes \beta)(T^2)$ and $(\rho \otimes \beta^2)(T^2)$. The corresponding space of \emph{geometrically weighted modular forms} with growth condition at the cusp of type $L$ is defined to be
\[
\GM(G,\rho,L) \df M(G,\rho,L)\oplus M(G,\rho\otimes \beta,L) \oplus M(G,\rho \otimes \beta^2,L). 
\]
We will include the groups $G$ and $\Gamma$ in the notation now, as both group swill play a role. The module $\GM(G,\rho,L)$ has a grading of type $\ZZ\oplus (\ZZ/3\ZZ)$ where the first factor $\ZZ$ corresponds to the weight, while the second factor $\ZZ/3\ZZ$ corresponds to the power of $\beta$ occuring in the transformation law. Let $S(G) \df \GM(G,1)$ denotes the ring of geometrically weighted modular forms for $G$ with respect to a canonical choice of logarithm. Recall that the following variant of the free-module theorem for $\SL_2(\ZZ)$ holds for $G$:
\begin{thm}
\label{t:FMTforG}
  If $\rho$ is a representation of $G$ and if $L$ denotes a choice of exponents for $\rho(T^2)$, then the module $\GM(G,\rho,L)$ is a free $(\ZZ\times \ZZ/3\ZZ)$-graded module of rank $\dim_{\CC} \rho$ over the ring $S(G)$ of geometrically weighted modular forms for $G$.
\end{thm}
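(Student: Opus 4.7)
The plan is to mimic the geometric proof of Theorem \ref{t:fmt}, replacing the compact modular curve $X(1)$ by the compact modular curve $X_G$ attached to $G$, and keeping track of the three cuspidal twists $\beta^i$ simultaneously. First I would interpret each summand $M(G,\rho\otimes\beta^i,L)$ as the space of global sections of a vector bundle on $X_G$, following the Riemann-Hilbert picture of Section \ref{s:vvmfs}. Because $\beta(T^2)=1$, a single matrix $L$ is a choice of exponents for all three twists at once, and the characters $\beta^i$ correspond to tensoring with a natural $3$-torsion line bundle arising from the orbifold structure of $G$ at the cusp. Multiplication by elements of $S(G)$ is then realised by tensor product of sections, equipping $\GM(G,\rho,L)$ with its asserted $(\ZZ\oplus\ZZ/3\ZZ)$-bigraded $S(G)$-module structure.

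Second I would apply the splitting principle. Since $G$ has genus zero (a short Riemann-Hurwitz calculation using that $S\notin G$ renders the order-two elliptic point unramified while the cusp has width $2$), the coarse moduli $X_G$ is isomorphic to $\PP^1$, and every vector bundle on $\PP^1$ splits as a direct sum of line bundles. The bundle representing $\GM(G,\rho,L)$ therefore decomposes as $\bigoplus_j \cL_j$, where each $\cL_j$ carries a well-defined bi-weight $(k_j,i_j)\in\ZZ\oplus\ZZ/3\ZZ$ and contributes a rank-one free summand of $\GM(G,\rho,L)$ as a bigraded $S(G)$-module.

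Third I would pin down the rank by a double counting argument. By Frobenius reciprocity combined with Theorem \ref{t:fmt} applied to the induced representations $\Ind_G^\Gamma\beta^i$, each module $M(G,\beta^i,0)$ is free of rank $2$ over $M(1)=\CC[E_4,E_6]$, and hence $S(G)$ has $M(1)$-rank $6$. Similarly $\GM(G,\rho,L)$ has $M(1)$-rank $6\dim\rho$, so dividing forces the $S(G)$-rank to be exactly $\dim\rho$, matching the claim.

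The hard part will be Step 1: making precise the orbifold geometry of $X_G$, and verifying that the cuspidal characters $\beta^i$ are realised by tensoring with an appropriate $3$-torsion line bundle in a way that multiplication in $S(G)$ corresponds exactly to tensor product of sections. Once that dictionary is in place, the remainder is a direct translation of the argument used for Theorem \ref{t:fmt}. This dictionary is set up in detail in \cite{CandeloriFranc2}, and the present Theorem is essentially a specialisation of a general free-module theorem for genus-zero Fuchsian groups proved there.
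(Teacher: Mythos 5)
Your proposal is correct and is essentially the paper's own argument: the proof in the text simply cites the general free-module theorem for genus-zero Fuchsian groups (Corollary 4.8 of \cite{CandeloriFranc2}), noting that $G$ has one cusp and two elliptic points, which is exactly the orbifold splitting-principle machinery you sketch and then, for the hard dictionary step, defer to the same reference. One small correction: the $3$-torsion line bundle realising the cuspidal character $\beta$ arises from the two order-$3$ elliptic points of $X_G$ rather than from the cusp --- cuspidal characters are by definition trivial on $T^2$, hence unramified at the cusp, and it is the orbifold structure at the elliptic points that supplies the torsion in $\Pic$.
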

\begin{proof}
This follows from Corollary 4.8 of \cite{CandeloriFranc2}. The key point is that $G$ only has one cusp and two elliptic points.
\end{proof}

The ring $S(G)$ is polynomial in two generators in weight $2$, and these generators can be described in terms of classical theta series. Recall from \cite{CandeloriFranc2} that if
\begin{align*}
  f &= (1+e^{2\pi i \frac{1}{6}})\theta_2^4-e^{2\pi i \frac{5}{6}}(\theta_3^4+\theta_4^4),\\
  g &= f|T,
\end{align*}
then $f \in M_2(G,\beta)$, $g \in M_2(G,\beta^2)$ and $S(G) = \CC[f,g]$ as $\ZZ\times (\ZZ/3\ZZ)$-graded rings. It's not too hard to show that $S(G) = M(\Gamma(2))$. More generally we can prove the following:
\begin{lem}
  \label{l:level2}
  Let $\rho$ be a representation of $G$ and let $L$ denote a choice of exponents for $\rho$. Then $L' = (L, \rho(R_0)L\rho(R_0^{-1}),\rho(R_0^2)L\rho(R_0^{-2}))$ denotes a choice of exponents for $\rho|_{\Gamma(2)}$ and we have the following:
  \begin{enumerate}
  \item $\Ind_{\Gamma(2)}^G (\rho|_{\Gamma(2)}) \cong \rho  \otimes (1\oplus \beta \oplus \beta^2)$;
  \item $\GM_{k}(G,\rho,L)$ is the space of global sections of the bundle $\pi_*\pi^*\cV_{k}(\rho,L)$ where $\pi \colon X(\Gamma(2)) \to X(G)$ is the natural map between compact orbifolds;
  \item there is a natural identification $\GM(G,\rho,L) = M(\Gamma(2),\rho|_{\Gamma(2)},L')$;
    \item in particular, $S(G) = M(\Gamma(2))$.
\end{enumerate}
\end{lem}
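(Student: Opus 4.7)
The plan is to prove (1), (2), (3) in order, with (4) following as the specialization $\rho = 1$ of (3). First I would establish the structural facts: $\Gamma(2)$ contains both $-I$ and $T^2$, and its image in $\SL_2(\FF_2) \cong S_3$ is trivial, so $G/\Gamma(2) \cong A_3 \cong \ZZ/3$, generated by the class of $R_0$; its character group is precisely $\{1,\beta,\beta^2\}$. Moreover $G$ has the single cusp $\infty$ with unipotent generator $T^2$, above which sit the three cusps of $\Gamma(2)$ indexed by the coset representatives $\{I,R_0,R_0^2\}$ of $G/\Gamma(2)$; the unipotent generator of the stabilizer in $\Gamma(2)$ of the cusp $R_0^i\cdot\infty$ is $R_0^i T^2 R_0^{-i}$, which confirms that $L'$ is a choice of exponents for $\rho|_{\Gamma(2)}$.

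Claim (1) is then the projection formula for induction from a normal subgroup:
\[
  \Ind_{\Gamma(2)}^G \rho|_{\Gamma(2)} \cong \rho \otimes \Ind_{\Gamma(2)}^G 1_{\Gamma(2)} \cong \rho \otimes \CC[G/\Gamma(2)] \cong \rho \otimes (1 \oplus \beta \oplus \beta^2),
\]
the last isomorphism being the decomposition of the regular representation of the abelian group $G/\Gamma(2)$ into its characters. For (2), the sheaf-theoretic projection formula on the orbifold $X(G)$ yields
\[
  \pi_*\pi^*\cV_k(\rho,L) \cong \cV_k(\rho,L) \otimes \pi_*\cO_{X(\Gamma(2))}.
\]
I would then identify $\pi_*\cO_{X(\Gamma(2))}$ with the rank-three bundle on $X(G)$ attached to the $G$-representation $\Ind_{\Gamma(2)}^G 1 \cong 1\oplus\beta\oplus\beta^2$ equipped with zero exponents at the cusp (possible because $\beta(T^2)=1$, i.e.\ the $\beta^i$ are cuspidal). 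By (1) this decomposes as $\cO \oplus \cL_\beta \oplus \cL_{\beta^2}$, so $\pi_*\pi^*\cV_k(\rho,L) \cong \cV_k(\rho,L) \oplus \cV_k(\rho\otimes\beta,L) \oplus \cV_k(\rho\otimes\beta^2,L)$, and taking global sections recovers $\GM_k(G,\rho,L)$ by definition.

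For (3) I would apply adjunction for the finite map $\pi$,
\[
  H^0(X(G),\pi_*\pi^*\cV_k(\rho,L)) = H^0(X(\Gamma(2)),\pi^*\cV_k(\rho,L)),
\]
and identify $\pi^*\cV_k(\rho,L) = \cV_k(\rho|_{\Gamma(2)}, L')$ using the cusp calculation from the first paragraph. Combined with (2) this gives $\GM(G,\rho,L) = M(\Gamma(2),\rho|_{\Gamma(2)},L')$. Claim (4) is then the case $\rho = 1_G$: $\GM(G,1,0) = S(G)$ by definition, while $M(\Gamma(2),1,0') = M(\Gamma(2))$. The main technical point to check, I expect, is the bookkeeping of exponents under the geometric operations -- namely, that $\pi^*\cV_k(\rho,L)$ really has exponent $\rho(R_0^i)L\rho(R_0^{-i})$ at the cusp $R_0^i\cdot\infty$, and correspondingly that the induced-representation interpretation on $X(G)$ from the three cuspidal data on $X(\Gamma(2))$ collapses back to the single datum $L$. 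Both reduce to the stabilizer calculation in the first paragraph, but must be tracked through the isomorphisms carefully.
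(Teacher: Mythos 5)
Your proposal is correct and follows essentially the same route as the paper: both rest on $G/\Gamma(2)\cong\ZZ/3$ with character group $\{1,\beta,\beta^2\}$, the identification $\pi^*\cV_k(G,\rho,L)\cong\cV_k(\Gamma(2),\rho|_{\Gamma(2)},L')$ via the three cusps conjugate under $R_0$, and the cuspidality $\beta(T^2)=1$ to control exponents in the pushforward. The only (cosmetic) difference is that you package the computation of $\pi_*\pi^*\cV_k$ via the sheaf-theoretic projection formula $\pi_*\pi^*\cV\cong\cV\otimes\pi_*\cO$ and the splitting $\pi_*\cO\cong\cO\oplus\cL_\beta\oplus\cL_{\beta^2}$, whereas the paper carries out the equivalent explicit computation at the cusp in a basis where the induced representation has exponent matrix $\diag(L,L,L)$.
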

\begin{proof}
  For part (1), note that quite generally, restricting a representation to a finite index subgroup and then inducing back is the same as tensoring with the permutation representation given by the cosets. In this case the permutation representation on $G/\Gamma(2)$ is isomorphic with $1 \oplus \beta \oplus \beta^2$, which proves (1).

To prove (2), first note that $\pi^*\cV_{k}(G,\rho,L) \cong \cV_{k}(\Gamma(2),\rho|_{\Gamma(2)},L')$, as one sees by examining the behaviour at the three cusps of $\Gamma(2)$, which are conjugate in $G$ by the powers of $R_0$. Therefore, we wish to describe the bundle
\[
  \pi_*\pi^*\cV_{k}(G,\rho,L) \cong \pi_*\cV_{k}(\Gamma(2),\rho|_{\Gamma(2)},L').
\]
By (1), over the open curve associated to $G$, this bundle is isomorphic with $\cV(G,\rho\otimes (1 \oplus \beta \oplus \beta^2))$. If we work in a basis for $\Ind_{\Gamma(2)}^G(\rho|_{\Gamma(2)})$ such that the image of $T$ is diagonal, then the matrix $\diag(L,L,L)$ is a possible choice of exponents. Using this basis, and since $\beta(T^2) = \beta^2(T^2) = 1$, an explicit computation at the cusp shows that 
\[
  \pi_*\pi^*\cV_{k}(G,\rho,L) \cong \cV_{k}(G,\Ind_{\Gamma(2)}^G(\rho|_{\Gamma(2)}),\diag(L,L,L))
\]
and therefore
\begin{align*}
  H^0(\pi_*\pi^*\cV_{k}(G,\rho,L)) &\cong H^0\left(\cV_{k}\left(G,\rho \otimes(1\oplus\beta\oplus\beta^2),L\oplus L\oplus L\right)\right)\\
  &\cong \GM_{k}(G,\rho,L).
\end{align*}
This proves (2), and (3) is then deduced from (2) by projecting to the first $\dim \rho$ components.

  Part (4) follows immediately from part (3) by taking $\rho = 1$ and $L = 0$, the canonical choice of exponents for $\rho$. Then $L' = (0,0,0)$, so that $L'$ is the canonical choice of exponents for the trivial representation of $\Gamma(2)$. Therefore
  \[
  S(G) = \GM(G,1,L) = M(\Gamma(2),1,L') = M(\Gamma(2)).
  \]
\end{proof}
The additional grading by $\ZZ/3\ZZ$ on $S(G)$ corresponds to breaking $M(\Gamma(2))$ up into $G$-isotypic components for the induced action of $G/\Gamma(2) \cong \ZZ/3\ZZ$. Hence $f,g \in M(\Gamma(2))$ are $G$-semiinvariants that generate $M(\Gamma(2))$ as a $\CC$-algebra.
\begin{rmk}
In \cite{CandeloriFranc2}, Theorem \ref{t:FMTforG} was deduced using general geometric machinery for genus zero orbifolds. One can instead use Lemmma \ref{l:level2} to deduce Theorem \ref{t:FMTforG} from the usual splitting principle for vector bundles on $\PP^1$ by identifying the modular curve $X(\Gamma(2))$ with the projective line. One technical point is that to identify $X(\Gamma(2))$ with $\PP^1$ we should work inside $\PSL_2(\ZZ)$, rather than in $\SL_2(\ZZ)$. Likewise, the paper \cite{CandeloriFranc2} assumes that $G \subseteq \PSL_2(\ZZ)$. However, since $-1$ is contained in our groups, we can reduce to working in $\PSL_2(\ZZ)$ by breaking representations into even and odd parts, and then replacing the odd part by a twist (and adjusting exponents suitably). Hence Theorem \ref{t:FMTforG} indeed holds as stated for $G \subseteq \SL_2(\ZZ)$.
\end{rmk}

\begin{prop}
  \label{p:inductioncase1}
  Let $\{\rho,\rho\otimes \beta, \rho\otimes \beta^2\}$ denote an orbit of two-dimensional irreducible representations of $G$ under the action of the cuspidal characters, normalized so that $\rho$ is the restriction of a representation of $\SL_2(\ZZ)$. Let $L$ denote a choice of exponents for $\rho(T^2)$. Then one of the following holds: if $k_i$ denotes the minimal classical weight for $\rho\otimes \beta^i$ with respect to the exponents $L$, then
\begin{enumerate}
\item $k_1 = k_2 = k_0+2$. In this case $\GM(G,\rho,L)$ is generated by a nonzero form $A \in M_{k_0}(G,\rho,L)$ and its derivative $DA\in M_{k_0+2}(G,\rho,L)$ as a free-module over $S(G)$. The form $A$ satisfies a differential equation of the form
  \[
  D^2A + \alpha E_4A = 0.
\]
    \item $k_1=k_2$ and $k_0 = k_1+2$. In this case there is a free basis $A \in M_{k_1}(G,\rho\otimes \beta,L)$ and $B\in M_{k_1}(G,\rho\otimes \beta^{2},L)$ for $\GM(G,\rho,L)$ over $S(G)$ such that
    \begin{align*}
  D(A,B) = (A,B)\twomat{0}{u f}{g}{0}
    \end{align*}
    for $u \in \CC^\times$.
  \end{enumerate}
\end{prop}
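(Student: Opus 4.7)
The plan is to combine the freeness of $\GM(G,\rho,L)$ over $S(G) = \CC[f,g]$ from Theorem \ref{t:FMTforG} with the $S$-conjugation symmetry afforded by $\rho$ extending to $\Gamma$, and then apply bigraded Hilbert-series bookkeeping on $S(G)$ to pin down the generator structure in each case.

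I first establish $k_1 = k_2$. The identities $SR_0S^{-1} = R_1$ and $SR_1S^{-1} = R_0$ show that conjugation by $S\in\Gamma$ descends to an automorphism of $G$ swapping $\beta$ and $\beta^2$. Because $\rho$ extends to $\Gamma$, the intertwiner $\rho(S)$ produces an isomorphism
\[
M_{k}(G,\rho\otimes\beta,L)\ \cong\ M_{k}(G,\rho\otimes\beta^2,\rho(S)L\rho(S)^{-1})
\]
for every $k$; since $\rho(S)L\rho(S)^{-1}$ has the same eigenvalues as $L$ and the minimal weight depends only on these eigenvalues (via the Euler characteristic formulas from Section \ref{s:vvmfs}), $k_1 = k_2$ follows.

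Next I classify the possible homogeneous free bases $(F_1,F_2)$ of $\GM(G,\rho,L)$ over $S(G)$. The $S$-symmetry just proved forces the multiset of (weight, $\ZZ/3\ZZ$-type) pairs for $(F_1,F_2)$ to be invariant under the swap of types $1$ and $2$, so either both generators have type $0$, or one has type $1$ and the other type $2$ at a common weight. The bigraded dimensions $\dim S(G)_{2,0}=0$, $\dim S(G)_{2,1}=\dim S(G)_{2,2}=\dim S(G)_{4,0}=1$, together with the parity constraint $(-1)^k = \rho(-I)$ forcing $k_0,k_1,k_2$ to share a common parity, then show that the first possibility gives generator weights $(k_0,k_0+2)$ and hence $k_1=k_2=k_0+2$, whereas the second gives both generators at weight $k_1$ with $k_0=k_1+2$. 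The remaining degenerate sub-possibility, in which both type-$0$ generators sit at weight $k_0$, is excluded because it would force $\dim M_{k_0+2}(G,\rho,L) = 0$, contradicting $DA \neq 0$; here $DA \neq 0$ is the usual $\eta$-argument, since $DA=0$ would force $A = \eta^{2k_0}v$ with $v$ spanning a one-dimensional subrepresentation of the irreducible $\rho$.

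In case (1), $A$ spans the one-dimensional space $M_{k_0}(G,\rho,L)$, and since the one-dimensional space $M_{k_0+2}(G,\rho,L)$ is spanned by $DA$ (a nonzero form by the $\eta$-argument), the pair $(A,DA)$ is a free $S(G)$-basis. Decomposing $D^2 A \in M_{k_0+4}(G,\rho,L)$ in this basis, its $DA$-component lies in $S(G)_{2,0}=0$ and its $A$-component lies in $S(G)_{4,0}=\CC\cdot fg$. Since $E_4 \in M_4(\Gamma)$ is a nonzero element of $S(G)_{4,0}$, we obtain $D^2A + \alpha E_4 A = 0$ for some $\alpha \in \CC$; nonvanishing of $\alpha$ comes from the $\eta$-argument applied to $DA$, since $D^2A=0$ would force $DA = \eta^{2(k_0+2)}w$ with $w$ spanning a one-dimensional subrepresentation. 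In case (2), the one-dimensional spaces $M_{k_1}(G,\rho\otimes\beta,L)$ and $M_{k_1}(G,\rho\otimes\beta^2,L)$ are spanned by forms $A$ and $B$ respectively, and $(A,B)$ is a free $S(G)$-basis by the Hilbert-series identification. Decomposing $DA \in M_{k_1+2}(G,\rho\otimes\beta,L)$ in this basis, the $A$-component lies in $S(G)_{2,0}=0$ and the $B$-component lies in $S(G)_{2,2}=\CC\cdot g$, so $DA = \lambda gB$ for some $\lambda \in \CC^\times$ (nonzero by the $\eta$-argument applied to $A$); rescaling $B$ by $\lambda$ achieves $DA = gB$. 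Symmetrically $DB = ufA$ for some $u \in \CC^\times$, nonzero by the $\eta$-argument applied to $B$.

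The main obstacle, in my view, is executing the Hilbert-series reduction cleanly: one must combine the $S$-symmetry, the bigraded vanishings in $S(G)$, the parity constraint, and the irreducibility of $\rho$ simultaneously in order to cut the a priori many generator configurations down to exactly the two listed in the statement.
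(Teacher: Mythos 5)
Your proof is correct and reaches the same conclusions, but it pins down the generator types by a genuinely different mechanism than the paper. Both arguments share the same skeleton: freeness of $\GM(G,\rho,L)$ over $S(G)=\CC[f,g]$, the fact that $D$ preserves the $\ZZ/3\ZZ$-type and raises the classical weight by $2$, the vanishings and one-dimensionality of the relevant bigraded pieces of $S(G)$, and the $\eta$-argument giving $DA\neq 0$ from irreducibility. The paper, however, starts from an arbitrary free basis $A\in M_a(G,\rho\otimes\beta^i,L)$, $B\in M_b(G,\rho\otimes\beta^j,L)$ and identifies the types a posteriori: in the unequal-weight case it notes that $D^2A+\alpha E_4A=0$ is a $\Gamma$-invariant equation because $E_4$ is modular for all of $\Gamma$, so the representation generated by the components of $A$ extends to $\Gamma$ and the normalization forces $i=0$; in the equal-weight case it shows the two generators have adjacent types $\beta^i,\beta^{i+1}$ but leaves the identification $i=1$ implicit (it really rests on $k_1=k_2$, which the paper only establishes later, in Lemma \ref{l:inductionweights}, by inducing to $\Gamma$). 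You instead prove $k_1=k_2$ and the swap-invariance of the generator multidegrees up front, using that conjugation by $S$ interchanges $R_0$ and $R_1$, hence $\beta$ and $\beta^2$, and that $\rho(S)$ intertwines because $\rho$ extends to $\Gamma$; this eliminates every configuration except the two in the statement, and in particular excludes the possibility that one of the two equal-weight generators has type $\beta^0$, a case the paper's analysis does not explicitly address. What your route buys is exactly this completeness; what it costs is one input you should make explicit: the slash-by-$S$ map lands in $M_k(G,\rho\otimes\beta^2,L'')$ for a conjugate exponent matrix $L''$, and you need $\dim M_k(G,\sigma,L'')=\dim M_k(G,\sigma,L)$ for all $k$. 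That does not follow from Section \ref{s:vvmfs} of this paper (which treats $\Gamma$), but from the vanishing-plus-Riemann--Roch dimension formulas for $G$ in \cite{CandeloriFranc2}, which show that the Hilbert series of $M(G,\sigma,L)$ depends only on $\sigma$ and the eigenvalues of $L$. With that reference supplied, your argument is complete.
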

\begin{proof}
The usual modular differential operator $D$ defines a map in weight $k$
\[
  D_k \colon M_{k}(G,\rho \otimes \beta^j,L) \to M_{k+2}(G, \rho\otimes \beta^j,L),
\]
and thus it acts on geometric weights $(k,\beta^j)$ by increasing the classical weight $k$ by $2$ and leaving the geometric $\ZZ/3\ZZ$ component $\beta^j$ fixed. Let $A$, $B \in \GM(G,\rho,L)$ denote forms of minimal classical weights $a$ and $b$, respectively, that are a basis for $\GM(G,\rho,L)$ as a graded module over $S(G)$. Assume without loss of generality that $a \leq b$, and $A \in M_{a}(G,\rho\otimes \beta^i,L)$ and $B \in M_{b}(G,\rho \otimes \beta^j,L)$ for some integers $i,j \in \{0,1,2\}$.

First suppose $a \neq b$, so that $b \geq a+2$. Observe that $DA \in M_{a+2}(G,\rho\otimes \beta^i,L)$, and $DA$ must be nonzero by irreducibility of $\rho$. Since the weight $(2,\beta^0)$-component of $S(G)$ is zero, it follows that $DA$ is not an $S(G)$-multiple of $A$. Hence $A,DA$ must be a basis for $\GM(G,\rho,L)$ over $S(G)$ and so, up to rescaling, $B = DA$. In this case $\GM(G,\rho,L)$ is cyclic as an $S(G)\langle D\rangle$-module. Since the $(4,\beta^0)$-component of $S(G)$ is spanned by $E_4$, it follows that $A$ satisfies a modular linear differential equation of the form
\begin{equation}
  \label{eq:inducedMLDE1}
  D^2A + \alpha E_4A = 0
\end{equation}
for some $\alpha \in \CC$. Since $E_4$ is in fact a modular form for $\Gamma$, it follows that for all $\gamma \in \Gamma$,
\[
  D^2(A|_{a}\gamma) + \alpha E_4(A|_{a}\gamma) = 0.
\]
It follows from this that the irreducible representation $\rho\otimes \beta^i$ can be extended to $\Gamma$. Thus, thanks to our normalization of $\{\rho, \rho\otimes \beta, \rho\otimes \beta^2\}$, we have $i = 0$. This treats case (1).

Next suppose that $a = b$. As in the previous paragraph, it can't be that $B \in M_{a}(G,\rho\otimes \beta^i,L)$, for otherwise it would be impossible to express $DA$ and $DB$ as a linear combination of $A$ and $B$ over $S(G)$. If $B \in M_{a}(G,\rho\otimes \beta^{i+1},L)$ then we must have $DA = v gB$ and $DB = u fA$ for nonzero scalars $u$ and $v$. Thus, after rescaling, in this case we can find a basis $A \in M_{a}(G,\rho\otimes \beta^i,L)$, $B \in M_{a}(G,\rho\otimes \beta^{i+1},L)$ such that
\begin{align*}
  D(A,B) = (A,B)\twomat{0}{u f}{g}{0}
\end{align*}
for a nonzero scalar $u \in \CC$. In the final case, we may similarly find a free basis $A \in M_{a}(G,\rho\otimes \beta^i,L)$ and $B \in M_{a}(G,\rho\otimes \beta^{i+2},L)$ such that
\begin{align*}
  D(A,B) = (A,B)\twomat{0}{u g}{f}{0}.
\end{align*}
In this case we can replace $\beta^i$ by $\beta^{i+2}$ to reduce to the previous case. This treats case (2).
\end{proof}

  In both cases of Proposition \ref{p:inductioncase1}, the minimal weights for $\rho \otimes \beta$ and $\rho \otimes \beta^2$ are equal, and the spaces of minimal weight forms are one dimensional. If we let $A$ and $B$ denote bases for the minimal weight spaces $M_{k_1}(G,\rho\otimes \beta,L)$ and $M_{k_1}(G,\rho\otimes \beta^2,L)$, respectively, then the analysis of part (2) of Proposition \ref{p:inductioncase1} still applies to $A$ and $B$, and they satisfy a differential equation as in part (2) of Proposition \ref{p:inductioncase1}. Therefore, in all cases, if we normalize $\{\rho, \rho\otimes \beta, \rho \otimes \beta^2\}$ so that $\rho$ is the restriction of a representation of $\Gamma$, then we can find minimal weight forms for $\rho\otimes \beta$ and $\rho \otimes \beta^2$ by solving a matrix ordinary differential equation of the form
\begin{equation}
    \label{eq:matrixmlde}
  D(A,B) = (A,B) \twomat{0}{uf}{g}{0}.
\end{equation}
In order use this equation, we will need to determine the minimal (classical) weight $k_1$ for $\rho \otimes \beta$ and $\rho \otimes \beta^2$, and use an appropriate modular function of weight zero to turn the matrix equation \eqref{eq:matrixmlde} into an algebraic differential equation on the modular curve defined by $G$. We attack the latter problem first, and then come back to the question of the minimal weight.
  
To begin, set
\[
  h = \frac{E_6}{12^{3/2}\eta^{12}} = \frac{E_6}{\sqrt{E_4^3-E_6^2}}.
\]
This modular form has weight zero, but it does not map the elliptic points and cusps of $G$ to $0$, $1$ and $\infty$. Observe that since $K = 1728/j = (E_4^3-E_6^2)/E_4^3$ we have
\[
  h^2+1 = \frac{E_6^2}{E_4^3-E_6^2}+1 = K^{-1}.
\]
Note that $K(\infty) = 0$, $K(i)=1$ and  $K(\zeta) = \infty$. Therefore $h(\infty) = \infty$, $h(i) = 0$ and $h(\zeta) = \pm i$. We want to choose a hauptmodul $Z$ satisfying $Z(\infty) = 0$, $Z(\zeta) = \infty$ and $Z(\zeta+1) = 1$. Classical results on values of the Dedekind $\eta$-function at quadratic irrationalities allow one to show that $h(\zeta) = i$ and $h(\zeta+1) = -i$. Therefore, set
\[
  Z = \frac{2}{1+ih}.
\]
Then we have $Z(\infty) = 0$, $Z(\zeta) = \infty$, $Z(\zeta+1) = 1$ and $Z(i) = 2$ (although it is not necessary to know this last value). Note also that
\[
  K^{-1} = 1-\left(\frac{2}{Z}-1\right)^2=\frac{Z^2-(2-Z)^2}{Z^2}=\frac{-4+4Z}{Z^2}
\]
so that $K = \frac{Z^2}{4(Z-1)}$. This value of $Z$ is our hauptmodul, or uniformizer, for $G$:
\begin{equation}
  \label{eq:Ghauptmodul}
 Z =2\frac{12^{3/2} \eta^{12}}{12^{3/2}\eta^{12}+iE_6}.
\end{equation}

We will require a number of other identities. First are the relations:
\begin{align*}
fg &= -4e^{2\pi i/6}E_4, & f^3+g^3 &= 16E_6, & Z &= \frac{f^3-g^3}{f^3}.
\end{align*}
Next let $\theta_Z = Z\frac{d}{dZ}$. We have the identity
\[
  dK = \frac{8Z(Z-1)-4Z^2}{16(Z-1)^2}dZ = \frac{Z^2-2Z}{4(Z-1)^2}dZ
\]
Therefore,
\[
  \theta_K = K\frac{d}{dK} = \frac{Z^2}{4(Z-1)}\frac{4(Z-1)^2}{Z^2-2Z}\frac{d}{dZ}=\frac{Z-1}{Z-2}\theta_Z.
\]
Since $\theta_q = \frac{E_6}{E_4}\theta_K$ we deduce that
\[
  \theta_q = \frac{-e^{2\pi i/6}(f^3+g^3)}{4fg}\frac{Z-1}{Z-2}\theta_Z
\]
It turns out that
\[
  \frac{-e^{2\pi i/6}(f^3+g^3)}{4fg}\frac{Z-1}{Z-2} = \frac{g^2}{4(e^{2\pi i/6}-1)f}
\]
Thus, our analogue of the identity $\theta_q = \frac{E_6}{E_4}\theta_K$ is
\begin{equation}
\label{eq:thetaid}
 \theta_q = \frac{g^2}{4(e^{2\pi i/6}-1)f}\theta_Z.
\end{equation}

Our next goal is to use these identities to transform the differential equation
\[
D(A,B) = (A,B)\twomat{0}{u f}{g}{0}
\]
from Proposition \ref{p:inductioncase1} into an ordinary differential equation on the $Z$-line. First off, we can rescale by a power of $\eta$ as usual to assume that the classical weight is $0$ and $D = \theta_q$. Therefore, we want to solve
\[
  \theta_Z(A,B) = 4(e^{2\pi i/6}-1)(A,B) \twomat{0}{u f^2/g^2}{f/g}{0}.
\]
Now the trouble is that the matrix on the right transforms according to nontrivial characters of $G$!

So assume that $A,B$ satisfy the MLDE above, and let us find the MLDE satisfied by $(f/g)A$ and $(g/f)B$, which both transform without character and have the same weights as $A$ and $B$. A straightforward computation shows that $f$ and $g$ are linearly independent solutions of the MLDE:
\begin{align*}
  D^2F &= \frac{1}{18}E_4F.
\end{align*}
We'll also need to evaluate $D(f/g)$ and $D(g/f)$, and for this we need to know $D(f)$ and $D(g)$. Since $D$ increases the weight by $2$ and leaves the character invariant, it must be the case that $D(f) = ug^2$ and $D(g) = vf^2$ for some $u$ and $v$. A straightforward computation shows that $D(f) = \frac{1}{12}e^{2\pi i/6}g^2$ and $D(g) = \frac{1}{12}e^{2\pi i/6}f^2$. Therefore,
\begin{align*}
  D(f/g) &= \frac{1}{12}e^{2\pi i/6}\frac{g^3-f^3}{g^2},&  D(g/f) &=\frac{1}{12}e^{2\pi i/6}\frac{f^3-g^3}{f^2}.
\end{align*}

Next we set $\alpha = (f/g)A$ and $\beta = (g/f)B$ then we find that
\begin{align*}
  D(\alpha,\beta) &=D\left((A,B)\twomat{f/g}{0}{0}{g/f}\right)\\
                  &= (A,B)\twomat{0}{uf}{g}{0}\twomat{f/g}{0}{0}{g/f} + (A,B)\twomat{D(f/g)}{0}{0}{D(g/f)}\\
                  &= (A,B)\twomat{0}{ug}{f}{0} + \frac{1}{18}e^{2\pi i/6}(A,B)\twomat{(g^3-f^3)/g^2}{0}{0}{(f^3-g^3)/f^2}\\
                  &= ((g/f)\alpha,(f/g)\beta)\left(\twomat{0}{ug}{f}{0} + \frac{1}{18}e^{2\pi i/6}\twomat{(g^3-f^3)/g^2}{0}{0}{(f^3-g^3)/f^2}\right)\\
  &= (\alpha,\beta)\left(\twomat{0}{ug^2/f}{f^2/g}{0} + \frac{1}{18}e^{2\pi i/6}\twomat{(g^3-f^3)/fg}{0}{0}{(f^3-g^3)/fg}\right)
\end{align*}
Therefore, we want to solve the differential equation
\begin{align*}
  \theta_Z(\alpha,\beta) &= \frac{4(e^{2\pi i/6}-1)f}{g^2}(\alpha,\beta)\left(\twomat{0}{ug^2/f}{f^2/g}{0} + \frac{1}{18}e^{2\pi i/6}\twomat{(g^3-f^3)/fg}{0}{0}{(f^3-g^3)/fg}\right)\\
  &= 4(e^{2\pi i/6}-1)(\alpha,\beta)\left(\twomat{0}{u}{f^3/g^3}{0} + \frac{1}{18}e^{2\pi i/6}\twomat{(g^3-f^3)/g^3}{0}{0}{(f^3-g^3)/g^3}\right)
\end{align*}
Everything above can be expressed in terms of $Z$. We know that $(g/f)^3 = 1-Z$, and hence $(f/g)^3 = (1-Z)^{-1}$. Thus $(g^3-f^3)/g^3 = 1-(f/g)^3 = 1-(1-Z)^{-1} = -\frac{Z}{(1-Z)}$ and $(f^3-g^3)/g^3 = \frac{Z}{(1-Z)}$. Thus, at the end of the day we find that $(\alpha,\beta)$ satisfy the matrix differential equation
\[
  \theta_Z(\alpha,\beta) = (\alpha,\beta)\twomat{\frac{2Z}{9(1-Z)}}{4e^{2\pi i/3}u}{\frac{4e^{2\pi i/3}}{1-Z}}{-\frac{2Z}{9(1-Z)}}.
\]
This is Fuchsian on the $Z$-line of degree two, and hence if we choose a cyclic vector we will be able to solve this in terms of hypergeometrics. Right multiplication of $(\alpha,\beta)$ by the matrix
\[
  P = \twomat{1}{0}{\frac{2Z}{9(1-Z)}}{\frac{4e^{2\pi i/3}}{1-Z}}
\]
transforms this system of equations into the system defined by the scalar equation
\begin{equation}
  \label{eq:fuchsian}
  \theta_Z^2F -\frac{Z}{1-Z}\theta_ZF + \frac{14Z^2-18(72e^{2\pi i/6}u+1)Z+1296e^{2\pi i/6}u}{81(1-Z)^2}F = 0.
\end{equation}
This is not hypergeometric, but it can be solved using hypergeometrics according to classical results going back to Riemann. Given a basis of solutions, we can then multiply by $P^{-1}$ to get a basis of solutions to our original equation. We deduce the following:
\begin{thm}
  \label{t:case2}
  Let $\rho$, $\rho\otimes \beta$ and $\rho\otimes \beta^2$ be as above, where $\rho$ is the restriction of a representation of $\SL_2(\ZZ)$. Let $a,b$ denote a basis of solutions to equation \eqref{eq:fuchsian}. Then there exists a choice of $\rho$ in its isomorphism class such that a nonzero minimal weight form in $M_{k_1,L}(G,\rho\otimes \beta)$ is
\[
  A = \eta^{2k_1}(g/f)\twovec{a}{b}.
\]
A nonzero minimal weight form in $M_{k_1,L}(G,\rho\otimes \beta^2)$ is
\[
  B = (e^{2\pi i/6}/36)\eta^{2k_1}(f/g)\twovec{2Za + 9(Z-1)\theta_Z(a)}{2Zb + 9(Z-1)\theta_Z(b)}.
\]
\end{thm}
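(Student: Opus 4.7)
The plan is to reverse the chain of transformations that reduced the minimal-weight matrix ODE of Proposition \ref{p:inductioncase1} to the scalar Fuchsian equation \eqref{eq:fuchsian}. Schematically, given minimal weight forms $A \in M_{k_1}(G,\rho\otimes\beta,L)$ and $B \in M_{k_1}(G,\rho\otimes\beta^2,L)$ satisfying $D(A,B)=(A,B)\stwomat{0}{uf}{g}{0}$, the text rescales by $\eta^{-2k_1}$ (legitimate since $D\eta = 0$), substitutes $\alpha = (f/g)\tilde A$ and $\beta = (g/f)\tilde B$ to obtain weight-zero forms transforming under $\rho$ itself (which extends to $\Gamma$), converts to the $Z$-line using \eqref{eq:thetaid}, and finally applies the cyclic vector with matrix $P$ to obtain \eqref{eq:fuchsian} satisfied by each component of $\alpha$. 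The proof is to invert this chain starting from a given basis $a,b$ of scalar solutions.

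Concretely, set $\alpha = (a,b)^T$. By construction both components of $\alpha$ solve \eqref{eq:fuchsian}, so $\alpha$ serves as the cyclic vector for the matrix $\theta_Z$-system. The first row of that matrix ODE then determines $\beta$ uniquely as
\[
  \beta \;=\; \frac{1-Z}{4e^{2\pi i/3}}\theta_Z \alpha - \frac{Z}{18 e^{2\pi i/3}}\alpha \;=\; \frac{e^{2\pi i/6}}{36}\bigl(2Z\alpha + 9(Z-1)\theta_Z \alpha\bigr),
\]
using the identity $-e^{-2\pi i/3} = e^{2\pi i/6}$. Undoing the substitutions by forming $\tilde A = (g/f)\alpha$ and $\tilde B = (f/g)\beta$, and then multiplying by $\eta^{2k_1}$, produces exactly the formulas in the theorem. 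The character bookkeeping is immediate: since $f \in M_2(G,\beta)$ and $g \in M_2(G,\beta^2)$, we have $g/f$ of character $\beta$ and $f/g$ of character $\beta^{-1}=\beta^2$, restoring the required twists of $\rho$.

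It remains to check holomorphy at the cusp of $G$ with the prescribed exponents $L$, nonvanishing, and minimality of weight. The exponent matching at $Z=0$ is the general principle that the indicial polynomial of a scalar Fuchsian ODE at a regular singularity equals the characteristic polynomial of the residue of the associated flat connection; the residue here is $L$ shifted by the scalar $-k_1/12$ coming from $\eta^{-2k_1}$, and the shift is undone when $\eta^{2k_1}$ is reintroduced, while multiplication by the cuspidally regular functions $g/f$ and $f/g$ introduces no further shift. Nonvanishing and weight-minimality follow from Proposition \ref{p:inductioncase1} together with the one-dimensionality of the spaces $M_{k_1}(G,\rho\otimes\beta,L)$ and $M_{k_1}(G,\rho\otimes\beta^2,L)$. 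The phrase ``a choice of $\rho$ in its isomorphism class'' reflects the fact that the choice of basis $a,b$ corresponds to a choice of conjugate representative. The main obstacle is purely computational: inverting the cyclic vector transformation consistently and simplifying the constant $-e^{-2\pi i/3}$ to $e^{2\pi i/6}$, as verified above.
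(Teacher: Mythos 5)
Your proposal is correct and follows essentially the same route as the paper: the paper's proof is the single line ``use the computations above'' together with the matrix identity expressing the second solution column as $\tfrac{e^{2\pi i/6}}{18}Z\alpha+\tfrac{e^{2\pi i/6}}{4}(Z-1)\theta_Z\alpha$, which is exactly your inversion of the cyclic-vector step, and your constant simplification $-e^{-2\pi i/3}=e^{2\pi i/6}$ and the character bookkeeping for $f/g$ and $g/f$ all check out. The extra remarks you supply on exponents at the cusp, nonvanishing, and the ``choice of $\rho$ in its isomorphism class'' are details the paper leaves implicit, and they are consistent with its setup.
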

\begin{proof}
  Use the computations above, noting that
  \[
  \twomat{a}{\theta_Z(a)}{b}{\theta_Z(b)}\twomat 1{\frac{e^{2\pi i/6}}{18}Z}0{\frac{e^{2\pi i/6}}{4}(Z-1)} = \twomat{a}{\frac{e^{2\pi i/6}}{18}Za + \frac{e^{2\pi i/6}}{4}(Z-1)\theta_Z(a)}{b}{\frac{e^{2\pi i/6}}{18}Zb + \frac{e^{2\pi i/6}}{4}(Z-1)\theta_Z(b)}.
\]
\end{proof}

Now momentarily let $\rho$ denote a representation of $G$ of arbitrary rank $d$, and let $L$ denote a choice of exponents for $T^2$,  so that $\rho(T^2) = e^{2\pi i L}$. We wish to describe the induced exponents $\Ind L$ satisfying $(\Ind \rho)(T) = e^{2\pi i \Ind L}$ such that
\[
  \pi_*\cV_{k}(G,\rho,L) \cong \cV_{k}(\Gamma,\Ind \rho,\Ind L)
\]
where $\pi \colon X_G \to X_{\Gamma}$ denotes the projection map beween the modular curves defined by $G$ and $\Gamma$, respectively. As for the case of tensor products and symmetric powers, it is not clear that the pushforward is a bundle of this form. We will show it is the case, and the computation comes down to examining the behaviour of the pushforward at the cusp.

It will be useful when discussing the $q$-expansion condition to use $T$ as the nontrivial coset representative for $G$ in $\Gamma$. First recall that in our basis for $\Ind \rho$ we have
\[
  (\Ind \rho)(g) = \begin{cases}
    \twomat{\rho(g)}{0}{0}{\rho(T^{-1}gT)} & g \in G,\\
    \twomat{0}{\rho(gT)}{\rho(T^{-1}g)}{0} & g \not\in G.
  \end{cases}
\]
Therefore, a holomorphic function $F \colon \uhp\to \CC^d$ satisfies
\[
  F(\gamma \tau) = ( c\tau+d)^k\rho(\gamma)F(\tau)
\]
for all $\gamma = \stwomat abcd \in G$ if and only if
\[
   \twovec{F(\gamma\tau)}{(F|T^{-1})(\gamma\tau)} = (c\tau+d)^k(\Ind\rho)(\gamma)\twovec{F(\tau)}{(F|T^{-1})(\tau)}
\]
for all $\gamma \in \Gamma$. To verify that the latter implies the former, restrict to $\gamma \in G$ and consider only the $d$ uppermost entries. For the other direction, it suffices to verify that the second identity holds when $\gamma = T$, a representative for the nontrivial coset of $G$ in $\Gamma$. In this case we can check directly:
\begin{align*}
  \twovec{F(\tau+1)}{(F|T^{-1})(\tau+1)} &= \twovec{(F|T)(\tau)}{F(\tau)}\\
  &= \twomat{0}{\rho(T^2)}{1}{0}\twovec{F(\tau)}{(F|T^{-1})(\tau)}
\end{align*}
The induced exponents $\Ind L$ should have the property that
\[
  \tilde F(\tau) = e^{-\pi i L\tau}F(\tau) = q_2^{-L}F(\tau)
\]
has a holomorphic $q_2 = e^{\pi i\tau}$-expansion if and only if
\[
\widetilde{(\Ind F)}(\tau) = e^{-2\pi i (\Ind L)\tau}\left(\begin{matrix}
    F(\tau)\\
    (F|T^{-1})(\tau)
  \end{matrix}
\right)
\]
has a holomorphic $q = e^{2\pi i\tau}$-expansion.

\begin{lem}
  \label{l:inducedexponents}
Let $\rho$ denote a finite dimensional complex representation of $G$ and let $L$ denote a choice of exponents for $\rho$, so that $L$ sastisfies $\rho(T^2) = e^{2\pi i L}$. Then the following properties hold:
\begin{enumerate}
   \item there is an isomorphism $M(G,\rho,L) \cong M(\Gamma, \Ind \rho, \Ind L)$ of graded $M(1)$-modules;
  \item in the basis for $\Ind \rho$ used above, we have
    \[  \Ind L = \frac 12\twomat{1}{e^{\pi iL}}{1}{-e^{\pi iL}}^{-1}\twomat{L}{0}{0}{L+1}\twomat{1}{e^{\pi iL}}{1}{-e^{\pi iL}};\]
  \item $\Tr(\Ind L) = \Tr(L) + \frac{\dim \rho}{2}$.
  \end{enumerate}
\end{lem}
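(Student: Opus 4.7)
The plan is to leverage the transformation-law equivalence verified in the discussion preceding the lemma, reducing everything to a computation at the cusp. For (2), I would set $M := e^{\pi iL}$ and $Q := \twomat{I}{M}{I}{-M}$, with inverse $Q^{-1} = \tfrac{1}{2}\twomat{I}{I}{M^{-1}}{-M^{-1}}$. A short block-matrix computation yields $Q^{-1}\diag(M,-M)Q = \twomat{0}{M^2}{I}{0} = (\Ind\rho)(T)$, with the ``block eigenvectors'' $(I,\pm M^{-1})^{T}$ sitting as columns of $Q^{-1}$. Since $\pi iL$ and $\pi i(L+1)$ are valid logarithms of $M$ and $-M$ respectively, exponentiating the formula for $\Ind L$ in the statement recovers $(\Ind\rho)(T)$, confirming $\Ind L$ as a valid exponent matrix.

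For (1), the key step is to match holomorphy at the cusp. I would split the $q_2$-expansion of $F \in M_k(G,\rho,L)$ by parity: write $F(\tau) = q_2^L(B(q) + q_2 C(q))$ with $B, C \in \CC^d[\![q]\!]$, where $q_2 := e^{\pi i\tau}$ and $q := q_2^2$. Then $(F|T^{-1})(\tau) = F(\tau-1)$ is obtained by sending $q_2 \mapsto -q_2$, yielding $M^{-1}q_2^L(B - q_2 C)$. Stacking these produces
\[
\Ind F = 2Q^{-1}\,\diag(q_2^L, q_2^{L+1})\,(B, C)^{T},
\]
and by (2), $e^{2\pi i(\Ind L)\tau} = Q^{-1}\diag(q_2^L, q_2^{L+1})Q$. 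Hence
\[
e^{-2\pi i(\Ind L)\tau}\,\Ind F = 2Q^{-1}(B,C)^{T} = (B+C,\ M^{-1}(B-C))^{T},
\]
which is visibly holomorphic in $q$. This shows $\Ind F \in M_k(\Gamma, \Ind\rho, \Ind L)$; the inverse is projection onto the first $d$ coordinates, and the argument runs in reverse by expanding an arbitrary holomorphic $\widetilde{\Ind F} = (H_1, H_2)^{T}$ via $2(B,C)^{T} = Q(H_1, H_2)^{T}$. Multiplication by elements of $M(1)$ (which are $\Gamma$-invariant and hence preserved by $T^{-1}$) commutes with this construction, so we obtain the claimed graded $M(1)$-module isomorphism.

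For (3), conjugation-invariance of the trace gives
\[
\Tr(\Ind L) = \tfrac{1}{2}\bigl(\Tr(L) + \Tr(L+1)\bigr) = \Tr(L) + \tfrac{\dim\rho}{2}.
\]

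The main subtlety is conceptual rather than computational: the parity splitting $F = q_2^L(B + q_2 C)$ is the analytic shadow of the double cover $\pi \colon X_G \to X_\Gamma$ at the cusp. This is why $\Ind L$ necessarily mixes $L$ and $L+1$: the two parities correspond to the two local sheets of the cover, with the ``odd'' sheet picking up a shift of $\tfrac{1}{2}$ in its exponent on passing to the $q$-line. Everything else is bookkeeping for block $2d \times 2d$ matrices with $d \times d$ entries, where one uses repeatedly that $M$, $M^{-1}$, $q_2^L$, and $q_2^{L+1}$ all commute since they are functions of $L$.
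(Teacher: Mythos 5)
Your proposal is correct and follows essentially the same route as the paper: the parity splitting $F=q_2^L(B+q_2C)$ is exactly the paper's formation of $F\pm e^{\pi iL}(F|T^{-1})=2q_2^L\sum a_{2n}q^n,\ 2q_2^{L+1}\sum a_{2n+1}q^n$, and your block-diagonalization check that $e^{2\pi i\,\Ind L}=(\Ind\rho)(T)$ matches the paper's concluding computation. The only (harmless) difference is that you derive part (1) directly from the $q$-expansion matching, where the paper simply cites the standard pushforward/induction isomorphism.
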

\begin{proof}
  Part (1) is a standard result about sections of pushforward bundles (see e.g. its use in \cite{Selberg}). It was also basically established above: the isomorphism is given by $F \mapsto (F,F|T^{-1})^t$ with inverse given by projection to the first $\dim \rho$ coordinates. Part (3) follows immediately from part (2). Thus it remains to establish part (2).
 
If $F$ is modular for $(\rho,L)$, write $F(\tau) = q_2^L\sum_{n\geq 0}a_nq_2^n$. Then
\[
  (F|T^{-1})(\tau) = e^{-\pi iL}q_2^L\sum_{n\geq 0}a_n(-1)^nq_2^n.
\]
Therefore
\begin{align*}
  F(\tau)+e^{\pi iL}(F|T^{-1})(\tau)& = 2q_2^L\sum_{n\geq 0}a_{2n}q^n\\
  F(\tau)-e^{\pi iL}(F|T^{-1})(\tau)& = 2q_2^{L+1}\sum_{n\geq 0}a_{2n+1}q^n
\end{align*}
That is,
\[
  \twomat{1}{e^{\pi iL}}{1}{-e^{\pi iL}}\left(\begin{matrix}
    F(\tau)\\
    (F|T^{-1})(\tau)
  \end{matrix}
\right) = 2\twomat{q_2^L}{0}{0}{q_2^{L+1}} \sum_{n\geq 0}\twovec{a_{2n}}{a_{2n+1}}q^n
\]
This shows that the claimed expression for $\Ind L$ satisfies the required $q$-expansion condition.

It remains to verify that
\[e^{2\pi i \Ind L} = (\Ind \rho)(T) = \twomat {0}{\rho(T^2)}{1}{0}.\]
Since the exponential commutes with conjugation,
\begin{align*}
  e^{2\pi i \Ind L} &= \twomat{1}{e^{\pi iL}}{1}{-e^{\pi iL}}^{-1}\exp\left(\pi i\twomat{L}{0}{0}{L+1}\right)\twomat{1}{e^{\pi iL}}{1}{-e^{\pi iL}}\\
                    &= -\frac 12 e^{-\pi iL}\twomat{-e^{\pi i L}}{-e^{\pi i L}}{-1}{1}\twomat{e^{\pi i L}}{0}{0}{-e^{\pi i L}}\twomat{1}{e^{\pi iL}}{1}{-e^{\pi iL}}\\
                    &=\frac 12 \twomat{1}{1}{e^{-\pi iL}}{-e^{-\pi iL}}\twomat{e^{\pi i L}}{\rho(T^2)}{-e^{\pi i L}}{\rho(T^2)}\\
  &= \twomat{0}{\rho(T^2)}{1}{0}
\end{align*}
This confirms that the induced exponents are as claimed.
\end{proof}

Now take $\rho = \rho(e,\zeta_1,\zeta_2,\zeta_3,a)$ to be an irreducible representation of $G$ such that $\zeta_1+\zeta_2+\zeta_3 = 0$, so that $\rho$ is the restriction of a representation of $\SL_2(\ZZ)$ by Proposition \ref{p:irrepsofG}. For all but finitely many explicitly computable values of $a$, Proposition \ref{p:irrepsofG} also shows that $\Ind(\rho\otimes \beta)$ and $\Ind(\rho \otimes \beta^2)$ are irreducible. We can now give a formula for the minimal weights of $\rho \otimes \beta$ and $\rho \otimes \beta^2$, since they must agree with the minimal weights of the induced representations. As remarked above, we will see again that these minimal weights are equal.
\begin{lem}
  \label{l:inductionweights}
  Let $\rho$ denote an irreducible representation of $G$ of rank two that is the restriction of a representation of $\Gamma$, and assume that $\Ind( \rho\otimes \beta)$ and $\Ind( \rho \otimes \beta^2)$ are irreducible. Let $L$ denote a choice of exponents for $\rho$, $\rho\otimes \beta$ and $\rho \otimes \beta^2$, and write $\rho(-1) = (-1)^e$.
  \begin{enumerate}
    \item The minimal classical weights for $(\rho \otimes \beta,L)$ and $(\rho\otimes \beta^2,L)$ are both equal to $k_1$ where
  \[
  k_1 = \begin{cases}
    3\Tr(L) & 3\Tr(L) \not\equiv e \pmod{2},\\
    3\Tr(L) + 1 & 3\Tr(L) \equiv e \pmod{2}.
  \end{cases}
\]
\item If $k_1 = 3\Tr(L)$ then $M(\Gamma,\Ind(\rho\otimes \beta), \Ind L)$ and $M(\Gamma,\Ind(\rho\otimes \beta^2),\Ind L)$ correspond to the cyclic case of Theorem \ref{t:weights}. Otherwise they both correspond to the noncyclic case of Theorem \ref{t:weights}.
\end{enumerate}
\end{lem}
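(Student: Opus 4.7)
The plan is to reduce the computation of the minimal weights to a direct application of Theorem \ref{t:weights} for the rank four induced representations, via the induction isomorphism of Lemma \ref{l:inducedexponents}. By Lemma \ref{l:inducedexponents}(1), there are isomorphisms of graded $M(1)$-modules
\[
  M(G,\rho\otimes\beta^i,L) \;\cong\; M(\Gamma,\Ind(\rho\otimes\beta^i),\Ind L)
\]
for $i\in\{1,2\}$. Under our hypotheses each $\Ind(\rho\otimes\beta^i)$ is an irreducible rank four representation of $\Gamma$, so the minimal weight on the right hand side is governed by Theorem \ref{t:weights}.

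Next I would assemble the numerical inputs fed to that theorem. Because $-I$ is central in $\Gamma$ and $\beta(-I)=1$, the induced representations inherit the sign from $\rho$:
\[
  (\Ind(\rho\otimes\beta^i))(-I) \;=\; (\rho\otimes\beta^i)(-I) \;=\; (-1)^e.
\]
Lemma \ref{l:inducedexponents}(3), applied with $\dim\rho = 2$, gives $\Tr(\Ind L) = \Tr(L)+1$, so that $3\Tr(\Ind L) = 3\Tr(L)+3$. Neither of these quantities depends on $i$, which immediately yields the coincidence of the two minimal weights asserted in (1): both modules sit inside instances of Theorem \ref{t:weights} with identical input data.

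Finally, Theorem \ref{t:weights} applied to $(\Ind(\rho\otimes\beta^i),\Ind L)$ produces the minimal weight $3\Tr(\Ind L)-3 = 3\Tr(L)$ in the cyclic case and $3\Tr(\Ind L)-2 = 3\Tr(L)+1$ in the noncyclic case, with the cyclic/noncyclic dichotomy determined by the parity of $3\Tr(\Ind L)$ relative to $e$ modulo $2$. Since $3\Tr(\Ind L) \equiv 3\Tr(L)+1 \pmod{2}$, one rewrites the condition $3\Tr(\Ind L)\not\equiv e \pmod{2}$ as a condition on $3\Tr(L)$ modulo $2$ relative to $e$, yielding the case distinction of part (1). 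Part (2) is then read off immediately by matching the value of $k_1$ with the corresponding case of Theorem \ref{t:weights}.

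The argument is almost entirely bookkeeping, and the sole point that merits explicit verification is the parity shift $3\Tr(\Ind L) \equiv 3\Tr(L)+1 \pmod{2}$ linking the two formulations of the dichotomy; no genuine obstacle arises, since every substantive step has been factored out into Lemma \ref{l:inducedexponents} and Theorem \ref{t:weights}.
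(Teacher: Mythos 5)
Your proposal follows exactly the same route as the paper's own proof: reduce via the isomorphism $M(G,\rho\otimes\beta^j,L)\cong M(\Gamma,\Ind(\rho\otimes\beta^j),\Ind L)$ of Lemma \ref{l:inducedexponents}(1), observe that the induced representations inherit the parity $e$ of $\rho$, apply Theorem \ref{t:weights} to the irreducible rank-four inductions, and substitute $\Tr(\Ind L)=\Tr(L)+1$. The paper is no more detailed than you are at the final step.

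However, the one step you declare to be ``bookkeeping'' and do not actually carry out is precisely where a problem appears. From $3\Tr(\Ind L)=3\Tr(L)+3$ one gets $3\Tr(\Ind L)\equiv 3\Tr(L)+1\pmod 2$, so the cyclic condition $3\Tr(\Ind L)\not\equiv e\pmod 2$ of Theorem \ref{t:weights} translates to $3\Tr(L)\equiv e\pmod 2$, in which case $k_1=3\Tr(\Ind L)-3=3\Tr(L)$; the noncyclic condition translates to $3\Tr(L)\not\equiv e\pmod 2$ with $k_1=3\Tr(L)+1$. This is the \emph{opposite} of the case division displayed in part (1) of the Lemma, which attaches $k_1=3\Tr(L)$ to $3\Tr(L)\not\equiv e\pmod 2$. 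So your claim that the substitution ``yields the case distinction of part (1)'' is not correct as written: it yields that distinction with the two congruence conditions interchanged. (Part (2), namely that $k_1=3\Tr(L)$ corresponds to the cyclic case, does come out of the computation.) The congruences in part (1) of the statement appear to be swapped --- the paper's own proof ends with the same unperformed substitution and so hides the same issue --- but since you assert the final translation rather than performing it, you have not proved the statement as given, and performing it shows the statement cannot be proved in that form.
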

\begin{proof}
  The parities of $\Ind \rho$, $\Ind (\rho \otimes \beta)$ and $\Ind (\rho\otimes \beta)$ are equal to the parity $e$ of $\rho$. Since
\[
  M(G,\rho\otimes \beta^j,L) \cong M(\Gamma, \Ind(\rho \otimes \beta^j), \Ind L)
\]
by Lemma \ref{l:inducedexponents}, the minimal classical weight for $(\rho \otimes \beta^j,L)$ is equal to the minimal weight for $(\Ind(\rho\otimes \beta^j),\Ind_e L)$. Since $\Ind(\rho \otimes \beta^j)$ is irreducible when $j=1,2$, in these cases the minimal weight can be computed from Theorem \ref{t:weights}: since the parity of the inductions is equal to the parity of $\rho$, the minimal weights for $\Ind (\rho \otimes \beta)$ and $\Ind(\rho \otimes \beta^2)$ relative to the exponents $\Ind_e L$ are equal to
\[
  k_1 = \begin{cases}
    3\Tr(\Ind_e L)-3 & 3\Tr(\Ind_e L) \not \equiv e \pmod{2},\\
    3\Tr(\Ind_e L)-2 & 3\Tr(\Ind_3 L) \equiv e \pmod{2}.
  \end{cases}
\]
The first case above is the cyclic case, while the second case is the noncyclic case. Since $\Tr(\Ind_e L) = \Tr(L) +1$ by Lemma \ref{l:inducedexponents}, the result follows.
\end{proof}

We are now prepared to explain how to describe all modular forms for rank $4$ irreducible representations of $\Gamma$ that arise by induction from irreducible representations of $G$ of rank $2$, as long as one uses induced exponents. First normalize the orbit $\rho$, $\rho\otimes \beta$, $\rho\otimes \beta^2$ so that $\rho$ is the restriction of a representation of $\Gamma$. Assume that $\alpha_1 =\rho\otimes \beta$ and $\alpha_2 = \rho\otimes \beta^2$ are irreducible. Such representations are classified by Proposition \ref{p:irrepsofG}. Let $L$ denote a choice of exponents for $\rho(T^2)$ and set $\Lambda = \Ind L$, where $\Lambda$ is as in Lemma \ref{l:inducedexponents}. The minimal weights for the free $\Gamma$-modules $M(\Gamma,\alpha_1,\Lambda)$ and $M(\Gamma,\alpha_2,\Lambda)$ of modular forms of rank four are both equal to $k_1$ as in Lemma \ref{l:inductionweights}. By our work in Sections \ref{s:cyclic} and \ref{s:noncyclic}, in either the cyclic or noncyclic case of Theorem \ref{t:weights}, we can describe all elements of $M(\Gamma, \alpha_1,\Lambda)$ and $M(\Gamma,\alpha_2,\Lambda)$ in terms of a form of minimal weight and its modular derivatives. Thus, we have reduced our problem of describing $M(\Gamma, \alpha_1,\Lambda)$ and $M(\Gamma,\alpha_2,\Lambda)$ to the problem of giving a formula for a form of minimal weight in each space.

The key point is that by part (1) of Lemma \ref{l:inducedexponents}, we have $M(G,\rho\otimes \beta,L) \cong M(\Gamma,\alpha_1,\Lambda)$ and $M(G,\rho\otimes \beta^2,L)\cong M(\Gamma,\alpha_2,\Lambda)$. These isomorphisms take a form $F$ on $G$ to $(F,F|T^{-1})^t$ on $\Gamma$. Thus, $q$-expansions for minimal weight forms in $M(\Gamma, \alpha_1,\Lambda)$ and $M(\Gamma,\alpha_2,\Lambda)$ can be computed once the $q$-expansions for minimal weight form in $M(G, \rho\otimes \beta^j,L)$ for $j=1,2$ are known.  

Recall that Proposition \ref{p:inductioncase1} and the discussion following it explained how to determin a differential equation satisfied by the forms of minimal weight in $M(G, \rho\otimes \beta^j,L)$ for $j=1,2$. As explained in Theorem \ref{t:case2}, formulas for these minimal weight forms can be found by solving the ordinary differential equation \eqref{eq:fuchsian}. While not hypergeometric, solving equation \eqref{eq:fuchsian} can be reduced to solving a hypergeometric equation using standard techniques going back to Riemann. In this way one can recover explicit formulas for generators of $M(\Gamma,\alpha_1,\Lambda)$ and $M(\Gamma,\alpha_2,\Lambda)$. It is cumbersome to make these formulas explicit, and often for computations it is better to solve Equation \eqref{eq:fuchsian} recursively, anyway. Thus, we will summarize this discussion in a weak form:
\begin{thm}
  \label{t:maininduction}
  Let $\rho$, $\rho\otimes \beta$ and $\rho\otimes\beta^2$ be irreducible representations of $G$ normalized so that $\rho$ is the restriction of a representation of $\Gamma$. Assume that $\alpha_1 = \Ind_G^\Gamma(\rho\otimes \beta)$ and $\alpha_2 = \Ind_G^\Gamma(\rho\otimes \beta^2)$ are irreducible. Let $L$ be a choice of exponets for $\rho(T^2)$ and let $\Lambda = \Ind_eL$ be the induced exponents for $\alpha_1$ and $\alpha_2$. Then all modular forms in the free $\Gamma$-modules $M(\Gamma,\alpha_1, \Lambda)$ and $M(\Gamma,\alpha_2,\Lambda)$ can be expressed entirely in terms of products and sums of $E_4$, $E_6$, powers of the $\eta$-function, algebraic functions of
  \[
   Z =2\frac{12^{3/2} \eta^{12}}{12^{3/2}\eta^{12}+iE_6},
 \]
 and hypergeometric series in $Z$.
\end{thm}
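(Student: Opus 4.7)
The plan is to assemble three pieces already developed in this section. First, by Lemma \ref{l:inducedexponents}(1), the map $F \mapsto (F,F|T^{-1})^t$ gives an isomorphism $M(G,\rho\otimes \beta^j,L) \xrightarrow{\sim} M(\Gamma,\alpha_j,\Lambda)$ for $j=1,2$. This map preserves the class of expressions listed in the statement: $E_4$ and $E_6$ are $T$-invariant, $\eta|T$ differs from $\eta$ by a root of unity, a direct calculation using $h|T=-h$ gives $Z|T=Z/(Z-1)$ (which is algebraic in $Z$), and a ${}_2F_1$ series in $Z|T$ can be rewritten as an algebraic factor times a ${}_2F_1$ series in $Z$ via the classical Pfaff transformation. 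Thus it suffices to produce, for $j=1,2$, a minimal weight generator of $M(G,\rho\otimes \beta^j,L)$ expressed in the prescribed class.

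Second, Proposition \ref{p:inductioncase1} and the derivation leading to Theorem \ref{t:case2} show that such a minimal weight form has the shape $\eta^{2k_1}(g/f)(a,b)^t$, and for $\rho\otimes \beta^2$ an analogous expression involving $f/g$ and a $\theta_Z$-derivative, where $(a,b)$ is a basis of solutions of the scalar Fuchsian equation \eqref{eq:fuchsian} on the $Z$-line. Since $(g/f)^3 = 1-Z$, the prefactors $f/g$ and $g/f$ are algebraic in $Z$, so it only remains to exhibit $(a,b)$ as a hypergeometric pair. Equation \eqref{eq:fuchsian} is a second-order linear ODE on $\PP^1_Z$ whose only singularities are regular singular points at $Z=0,1,\infty$, and by Riemann's classical theorem any such equation is reducible to the Gauss hypergeometric equation: a substitution $F = Z^\mu(1-Z)^\nu \tilde F$, with $\mu,\nu$ read off from the indicial polynomial of \eqref{eq:fuchsian} at $0$ and $1$, yields a hypergeometric equation for $\tilde F$ whose parameters are explicit functions of $L$, the representation parameter $a$, and the scalar $u$. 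The solutions $a,b$ then take the form $Z^\mu(1-Z)^\nu{}_2F_1(\cdot,\cdot;\cdot;Z)$.

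Third, to extend from a minimal weight form to a complete free $M(1)$-basis of $M(\Gamma,\alpha_j,\Lambda)$, Lemma \ref{l:inductionweights} tells us whether we are in the cyclic case of Section \ref{s:cyclic} or the noncyclic case of Section \ref{s:noncyclic}. In both cases the remaining basis elements are built from the minimal weight form using the modular derivative $D$ together with multiplication by $E_4$; since $D\eta = 0$, one has $D(\eta^{2k_1}\tilde F) = \eta^{2k_1}\theta_q \tilde F$ for $\tilde F$ of weight zero, and $\theta_q$ can be rewritten in terms of $\theta_Z$ via the identities $\theta_q = (E_6/E_4)\theta_K$ and $\theta_K = \frac{Z-1}{Z-2}\theta_Z$ established earlier. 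Since $\theta_Z$ preserves products of algebraic and hypergeometric functions of $Z$, iterated modular derivatives stay within the permitted class, and the theorem follows. The main obstacle is bookkeeping rather than mathematical content: tracking the various conjugations (by the matrix $P$ of Theorem \ref{t:case2}, and by the change of basis described at the end of Section \ref{s:noncyclic}), the $\eta^{2k_1}$ and $f/g$ twists used to reduce to the weight-zero, cuspidal-character-trivial setting, and the explicit indicial data of \eqref{eq:fuchsian} in terms of $(L,a,u)$, is where the bulk of the work lies; the one genuinely conceptual input is Riemann's reduction of a Fuchsian ODE with three regular singular points to the hypergeometric equation, which is precisely what prevents Heun-type functions from intruding on the final formulas.
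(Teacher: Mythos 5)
Your proposal is correct and follows essentially the same route as the paper: transfer via the induction isomorphism of Lemma \ref{l:inducedexponents}(1), produce minimal weight forms from Theorem \ref{t:case2} by reducing the three-regular-singular-point equation \eqref{eq:fuchsian} to the hypergeometric equation \`a la Riemann, and then generate the full free basis using modular derivatives in the cyclic or noncyclic regime of Lemma \ref{l:inductionweights}. Your explicit check that the second block $F|T^{-1}$ stays inside the permitted function class (via $Z|T = Z/(Z-1)$ and the Pfaff transformation) is a detail the paper leaves implicit, and is a worthwhile addition.
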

As for our results on tensor products and symmetric cubes, Theorem \ref{t:maininduction} only describes modular forms for $\alpha_1$ and $\alpha_2$ with respect to induced exponents $\Ind L$. There are choices of exponents $L'$ that do not arise from induction, but it is always possible to choose $L$ so that there is an inclusion of lattices
\[
  M(G,\alpha_j, L') \subseteq M(G,\alpha_j,\Ind L) \subseteq M^\dagger(G,\alpha_j).
\]
Thus, Theorem 41 applies in fact to all weakly holomorphic modular forms for $\alpha_j$. If one wanted to prove unbounded denominator type results as in say \cite{FrancMason1}, then one could (and would need) to make Theorem \ref{t:maininduction} more precise. We do not recommend this, however, as the resulting computations would be quite involved.

\setcounter{tocdepth}{2}

\bibliographystyle{plain}

\end{document}